\DeclareMathAlphabet{\mathbbm}{U}{bbm}{m}{n}% from bbm.sty
\definecolor{CadetBlue}{cmyk}{0.62, 0.57, 0.23, 0 }
\definecolor{black}{cmyk}{1, 0.5, 0, 0 }
\definecolor{RedViolet}{cmyk}{0.07, 0.9, 0, 0.34 }
\definecolor{SeaGreen}{cmyk}{0.69, 0, 0.5, 0}
\definecolor{RoyalBlue}{cmyk}{1, 0.5, 0, 0 }
\definecolor{RedViolet}{cmyk}{0.07, 0.9, 0, 0.34 }
\definecolor{BrickRed}{cmyk}{0, 0.89, 0.94, 0.28}
\DeclareMathAlphabet{\mathpzc}{OT1}{pzc}{m}{it}
\newcommand{\R}{\mathbb R}
\newcommand{\C}{\mathbb C}
\newcommand{\F}{\mathbb F}
\newcommand{\Q}{\mathbb Q}
\newcommand{\Z}{\mathbb Z}
\newcommand{\T}{\mathbb T}
\newcommand{\SI}{\mathbb S}
\newtheorem{theo}{Theorem}
\newtheorem{lemm}{Lemma}
\newtheorem{prop}{Proposition}
\newtheorem{coro}{Corollary}
\newtheorem*{theon}{Theorem}
\newtheorem*{coronn}{Corollary}
\newtheorem*{coron}{Characterization Theorem}
\theoremstyle{definition}
\newtheorem{defi}{Definition}
\theoremstyle{remark}
\newtheorem{exam}{Example}
\newtheorem{note}{Note}
\title[Alexander Polynomial]{On The Characterization Problem of Alexander Polynomials of Closed 3-Manifolds}
\author{K. Alcaraz}
\address{The University of Texas at Austin, Department of Mathematics, 1 University Station C1200 Austin, TX 78712-0257}
\email{karin@math.utexas.edu}
\date{April 15, 2014}
\subjclass[2010]{Primary 57M27}
\keywords{Alexander polynomial, closed 3-manifolds}
\begin{document}
\vspace{2cm}
\begin{abstract} 

\noindent We give a characterization for the Alexander polynomials $\Delta_{M}$ of closed orientable 3-manifolds $M$ with first Betti number $b_{1}M=1$ and some partial results for the characterization problem in the cases of $b_{1}M>1$.  We first prove an analogue of a theorem of Levine: that the product of an Alexander
polynomial $\Delta_{M}$ with a non-zero trace symmetric polynomial in $b_{1}M$ variables is again an Alexander polynomial
of a closed orientable 3-manifold.  Using the fact that there exist $M$ with $\Delta_{M}=1$ for $b_{1}M=1,2,3$, we conclude
that the symmetric polynomials of non-zero trace in 1, 2 or 3 variables are Alexander polynomials of closed orientable 3-manifolds.
When  $b_{1}M=1$ we prove  that non-zero trace symmetric polynomials are the only ones that can arise.  Finally,
for $b_{1}M\geq 4$, we prove that $\Delta_{M}\not=1$ implying that for such manifolds not all non-zero trace symmetric polynomials occur.

 \end{abstract}
 
 \maketitle
\tableofcontents
\section{Introduction}

Let $M$ be a manifold, $b_{1}M$ its first Betti number.  For $b_{1}M\geq 1$, the Alexander polynomial of $M$, $\Delta_{M}$, is a polynomial  invariant in $b_{1}M$ variables
first defined by Alexander \cite{Alex} for $M= \SI^{3}-K$ a knot complement and more generally for $M$ having  finitely presented $\pi_{1}$ by Fox \cite{FoxI}. 

 In the case of knot complements, a characterization of Alexander polynomials was given by Seifert \cite{Ro}, and later,
in the case of link complements, necessary conditions on Alexander polynomials were described by Torres \cite{To}  (when $b_{1}=2$, these conditions are insufficient \cite{Hill1}, \cite{Pl}).  

In this paper, we consider the characterization problem for 
Alexander polynomials of closed orientable 3-manifolds, giving a characterization in the case $b_{1}M=1$ and some partial results concerning characterization in the cases of $b_{1}M> 1$.  

Our first result is the following analog for closed orientable 3-manifolds of a Theorem of Levine (see \S \ref{LevineSection} %\S \ref{LevLinks}
or \cite{Le2}), which allows us to produce by multiplication new Alexander polynomials from a given one.
By the {\it trace} of a polynomial we mean the sum of its coefficients.

\begin{theon} Let $M$ be a closed orientable $3$-manifold with $b_{1}M=n$ and let $\lambda$ be a symmetric Laurent polynomial in $n$ variables with non zero trace.
Then there exists a $3$-manifold $M'$ with $b_{1}M'=n$ for which
\[ \Delta_{M'}=\lambda\cdot\Delta_{M}.\]
\end{theon}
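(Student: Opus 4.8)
The strategy, in the spirit of Levine's realization arguments, is to obtain $M'$ from $M$ by a single Dehn surgery on a nullhomologous knot (or link) in $M$, chosen so that in the maximal free abelian cover the surgery attaches a cyclic summand of order $\lambda$ to the Alexander module of $M$. Write $\tau=\lambda(1,\dots,1)\neq0$ for the trace, let $\Lambda=\Z[t_1^{\pm1},\dots,t_n^{\pm1}]$ be the integral group ring of the free part $\Z^n$ of $H_1M$, and let $\widetilde{M}$ be the cover of $M$ corresponding to the projection $\phi\colon\pi_1M\to\Z^n$, so that $\Delta_M\doteq\operatorname{ord}_\Lambda H_1(\widetilde{M})$. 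Since $\lambda$ is symmetric and $\lambda-\tau$ vanishes under the augmentation $t_i\mapsto1$, collecting the coefficients of $\lambda$ into pairs $\{g,g^{-1}\}$ and solving for the constant term yields a finite integral expansion $\lambda-\tau=\sum_i n_i\,(g_i+g_i^{-1}-2)$ with $n_i\in\Z$, $g_i\in\Z^n$; this exhibits $\lambda-\tau$ as a sum of basic symmetric, augmentation-trivial blocks, each of which can be installed on a knot one at a time.

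Next I would build the surgery curve. Start with a small unknot in a ball of $M$ and perform $\sum_i|n_i|$ clasp (finger) moves, each dragging a band of the knot around a loop whose homology class is $g_i$ and back, with the sign of $n_i$, together with small kinks cancelling the resulting change of integral self-linking. Done carefully this produces a nullhomologous knot $C\subset M$ whose equivariant self-linking number over $\Lambda$, namely $\sum_{g\in\Z^n}\operatorname{lk}(\widetilde C_0,g\widetilde C_0)\,g$ for a lift $\widetilde C_0$ of $C$, equals $f+(\lambda-\tau)$, where the integral part $f$ is the chosen framing and can be prescribed to be any nonzero integer; I take $f=\tau$. If a single knot turns out to be too rigid, one replaces $C$ by a nullhomologous framed link and instead realizes the $1\times1$ matrix $(\lambda)$ as the determinant of a larger Hermitian equivariant linking matrix; the general fact needed, proved by the same local clasp constructions, is that every Hermitian matrix over $\Lambda$ that is nonsingular after setting all $t_i=1$ occurs as the framed equivariant linking matrix of some nullhomologous framed link in $M$.

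Let $M'$ be the result of $\tau$-framed surgery on $C$. Since $C$ is nullhomologous and $\tau\neq0$, the surgery preserves the first Betti number, so $b_1M'=n$; and because the meridian and $0$-framed longitude of $C$ lie in $\ker\phi$, the homomorphism $\phi$ extends to $\phi'\colon\pi_1M'\to\Z^n$. Passing to the corresponding $\Z^n$-covers, $\widetilde{M'}$ is $\widetilde{M}$ modified by equivariant Dehn surgery along the $\Z^n$-orbit of $\widetilde C_0$, whose framed equivariant linking datum is the $1\times1$ block $\bigl(\tau+(\lambda-\tau)\bigr)=(\lambda)$. A Mayer--Vietoris computation in the covers then shows that $H_1(\widetilde{M'})$ is pseudo-isomorphic to $H_1(\widetilde{M})\oplus\Lambda/(\lambda)$ as a $\Lambda$-module. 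Since orders are multiplicative and $\operatorname{ord}_\Lambda\bigl(\Lambda/(\lambda)\bigr)=\lambda$ — here $\lambda\neq0$ is used so that $\lambda$ is a non-zero-divisor in the domain $\Lambda$ — we conclude $\Delta_{M'}\doteq\lambda\cdot\Delta_M$, which is the assertion.

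The nonzero-trace hypothesis enters exactly here: it is what allows the surgery coefficient $\tau$ to be taken nonzero, and $0$-framed surgery on a nullhomologous knot raises $b_1$, so the excluded trace-zero case genuinely fails. The symmetry of $\lambda$ is what permits encoding it through (equivariant, hence Hermitian) self-linking. I expect the construction of the surgery curve to be the main obstacle: producing a knot or link in $M$ whose equivariant linking matrix realizes a prescribed Hermitian value, and tracking how each clasp move alters both the $\Lambda$-coefficients and the integral framing so that one lands exactly on $(\lambda)$ with nonzero trace. A secondary point requiring care is the bookkeeping of orders for the (generally non-square) presentations coming from Mayer--Vietoris, together with the $H_0$ and pseudo-null ambiguities, but this is routine once the equivariant surgery picture is in place.
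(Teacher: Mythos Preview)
Your approach is essentially the same as the paper's: build a nullhomotopic surgery curve from an unknot by attaching bands along loops representing the classes $g_i$, with local linkings encoding the coefficients of $\lambda$, and then do $\tau$-framed surgery so that in the free abelian cover the meridian acquires the single relation $\lambda\mu=0$. The decomposition $\lambda-\tau=\sum_i n_i(g_i+g_i^{-1}-2)$ you write down is exactly the paper's pairing of terms $\{t^I,t^{-I}\}$, and your ``clasp/finger moves with cancelling kinks'' are the paper's band-and-linking construction together with its correction from the obvious longitude to the preferred one.

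The one place the paper is sharper than your sketch is the module computation. You invoke a Mayer--Vietoris argument yielding only a pseudo-isomorphism $H_1(\widetilde{M'})\sim H_1(\widetilde M)\oplus\Lambda/(\lambda)$ and then appeal to multiplicativity of orders modulo pseudo-null ambiguities. The paper avoids this: because the surgery curve is null\emph{homotopic} (your construction gives this too, though you only claim nullhomologous), each lift bounds a surface in $\widehat M$, and intersection with these surfaces furnishes an honest $\Lambda$-linear splitting of the short exact sequence $0\to\Lambda\to H_1(\widehat X)\to H_1(\widehat M)\to 0$. This yields a genuine direct sum $H_1(\widehat X)\cong H_1(\widehat M)\oplus\Lambda$ and hence a block-diagonal presentation matrix $\begin{pmatrix}P&0\\0&\lambda\end{pmatrix}$, so $\Delta_{M'}=\lambda\cdot\Delta_M$ on the nose, with no pseudo-null bookkeeping and no need to contemplate a link in place of a knot.
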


This Theorem is proved in \S \ref{LevineSection}.

For low Betti numbers $b_{1}=1, 2$ or $3$, the following closed orientable 
manifolds have 
$\Delta_{M}=1$:

\begin{itemize}
\item[-] $\SI^{1}\times\SI^{2}$, $b_{1}=1$.
\item[-] $\text{\sf H}_{3}(\R)/\text{\sf H}_{3}(\Z )$ = Heisenberg manifold \cite{Mc}, $b_{1}=2$. 
\item[-]  $\T^{3}$ = the 3-torus, $b_{1}=3$.
\end{itemize}

Combining these examples with the above analog of Levine's Theorem we have

\begin{coronn}  Every symmetric Laurent polynomial in $1$, $2$ or $3$ variables having non-zero trace is the Alexander polynomial of a $3$-manifold with first Betti number $1$, $2$ or $3$.
\end{coronn}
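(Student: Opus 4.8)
The plan is simply to combine the two ingredients that the excerpt has already assembled: the Main Theorem (the closed-manifold analog of Levine's theorem) and the explicit list of closed orientable $3$-manifolds with trivial Alexander polynomial. First I would fix $n\in\{1,2,3\}$ and let $\lambda$ be an arbitrary symmetric Laurent polynomial in $n$ variables with $\mathrm{tr}(\lambda)\neq 0$. For each such $n$ I would select the corresponding manifold $M$ from the displayed list: $M=\SI^{1}\times\SI^{2}$ when $n=1$, $M=\text{\sf H}_{3}(\R)/\text{\sf H}_{3}(\Z)$ (the Heisenberg manifold) when $n=2$, and $M=\T^{3}$ when $n=3$. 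In each of these cases $M$ is a closed orientable $3$-manifold with $b_{1}M=n$ and $\Delta_{M}=1$.

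Next I would apply the Main Theorem to this particular $M$ together with the chosen $\lambda$. Since $\lambda$ is a symmetric Laurent polynomial in $n$ variables of non-zero trace and $b_{1}M=n$, the theorem yields a closed orientable $3$-manifold $M'$ with $b_{1}M'=n$ and
\[ \Delta_{M'}=\lambda\cdot\Delta_{M}=\lambda\cdot 1=\lambda . \]
This $M'$ is exactly the manifold required by the statement, and since $n$ ranges over $\{1,2,3\}$ and $\lambda$ over all symmetric Laurent polynomials of non-zero trace in the corresponding number of variables, the corollary follows.

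The only point that calls for any care — and the sole place where the argument could misfire — is verifying that the hypotheses of the Main Theorem are met verbatim and that the asserted values $\Delta_{M}=1$ for the three sample manifolds are correct under the normalization (symmetry convention, and meaning of ``trace'') used in this paper. The identity $\Delta_{\SI^{1}\times\SI^{2}}=1$ is classical; $\Delta_{\T^{3}}=1$ follows from the structure of the group ring $\Z[\Z^{3}]$; and the Heisenberg case is recorded in \cite{Mc}. Once these facts are in place the deduction is immediate: there is essentially no obstacle beyond this bookkeeping, because all of the genuine content resides in the Main Theorem proved in \S\ref{LevineSection}.
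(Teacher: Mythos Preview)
Your proposal is correct and matches the paper's own argument exactly: the paper simply states that the corollary follows by ``combining these examples with Theorem~\ref{generalizedLevinestate},'' which is precisely the deduction you have written out in detail. There is no additional content in the paper's proof beyond this.
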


It is natural to ask if the statement in the Corollary gives necessary conditions for a characterization for Betti numbers $b_{1}=1, 2$ or $3$.  In the case $b_{1}=1$, it does.
We say that a Laurent polynomial is {\it unit symmetric} if it is symmetric after multiplication with a unit of the ring
$\Z [t_{1}^{\pm 1},\dots ,t_{n}^{\pm 1}]$.

\begin{coron}  Let $\lambda$ be a Laurent polynomial in $1$ variable.  Then $\lambda =\Delta_{M}$
for some closed $3$-manifold $M$ with $b_{1}M=1$ $\Leftrightarrow$ $\lambda$ is unit symmetric and has non-zero trace.
\end{coron}

Finally, we consider the case of manifolds with $b_{1}\geq 4$.  Our main result is the following:

 \begin{theon} $\Delta_{M}\not =1$ for any closed $3$-manifold with $b_{1}M\geq 4$.  
 \end{theon}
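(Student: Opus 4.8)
The plan is to argue by contradiction, splitting the argument into one homological‑algebra step and one elementary dimension count. Write $n=b_{1}M\ge 4$, put $H=H_{1}(M;\Z)/\mathrm{tors}\cong\Z^{n}$, $\pi=\pi_{1}M$, $\pi'=\ker(\pi\to H)$, and let $p\colon\widetilde M\to M$ be the associated free abelian cover, so $\pi_{1}\widetilde M=\pi'$ and the deck group is $H$. Set $\Lambda=\Q[H]\cong\Q[t_{1}^{\pm1},\dots,t_{n}^{\pm1}]$, a regular Noetherian domain of global dimension $n$, and write $A_{i}=H_{i}(\widetilde M;\Q)=H_{i}(M;\Lambda)$ for the homology of $M$ with coefficients in the local system $\Lambda$; thus $A_{1}$ is the rational Alexander module. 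I will show first that $\Delta_{M}=1$ forces $A_{1}=0$, and second that $A_{1}=0$ is impossible once $n\ge4$.

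For the first step, note that $\Delta_{M}=1\neq0$ makes $A_{1}$ a $\Lambda$‑torsion module of unit order, so its support has codimension $\ge 2$ in $\mathrm{Spec}\,\Lambda$; equivalently $\mathrm{grade}_{\Lambda}A_{1}\ge2$, equivalently $\mathrm{Ext}^{p}_{\Lambda}(A_{1},\Lambda)=0$ for $p\le1$. (This uses that for a closed $3$‑manifold the relevant normalization of $\Delta_{M}$ and $\mathrm{ord}_{\Lambda}A_{1}$ differ only by factors which do not affect the codimension‑$\ge2$ conclusion; this bookkeeping is the one point that needs care.) Also $A_{0}=\Q=\Lambda/(t_{1}-1,\dots,t_{n}-1)$ with trivial action, $A_{3}=0$ since $\widetilde M$ is an open $3$‑manifold, and — since $\chi(M)=0$ and $A_{1}$ is torsion — $A_{2}$ is torsion too, so $\mathrm{Hom}_{\Lambda}(A_{2},\Lambda)=0$; moreover $(t_{1}-1,\dots,t_{n}-1)$ is a regular sequence, whence $\mathrm{Ext}^{p}_{\Lambda}(A_{0},\Lambda)=0$ for $p<n$. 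Since $M$ is a closed oriented $3$‑manifold, hence a $\mathrm{PD}_{3}$‑complex, capping with the fundamental class gives $H^{k}(M;\Lambda)\cong A_{3-k}$ (up to the harmless conjugation of $\Lambda$). Feeding these vanishings into the universal‑coefficient spectral sequence $E_{2}^{p,q}=\mathrm{Ext}^{p}_{\Lambda}(A_{q},\Lambda)\Rightarrow H^{p+q}(M;\Lambda)$: the total‑degree‑$1$ entries all vanish, so $A_{2}\cong H^{1}(M;\Lambda)=0$; then the total‑degree‑$2$ entries all vanish too — here $\mathrm{Ext}^{2}_{\Lambda}(A_{0},\Lambda)=0$ because $2<n$ — so $A_{1}\cong H^{2}(M;\Lambda)=0$. (This already works for $n\ge3$, consistently with $M=\T^{3}$ when $n=3$.)

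For the second step, apply the five‑term exact sequence in rational homology to $1\to\pi'\to\pi\to H\to1$:
\[ H_{2}(\pi;\Q)\xrightarrow{\ f\ }H_{2}(H;\Q)\to H_{1}(\pi';\Q)_{H}\to H_{1}(\pi;\Q)\to H_{1}(H;\Q)\to0. \]
The rightmost map is an isomorphism by construction of $H$, and $H_{1}(\pi';\Q)_{H}$ is the $H$‑coinvariant quotient of $A_{1}=0$, hence zero; therefore $f$ is onto. But $H_{2}(H;\Q)=H_{2}(\Z^{n};\Q)$ has dimension $\binom{n}{2}$, while Hopf's theorem gives a surjection $H_{2}(M;\Q)\to H_{2}(\pi;\Q)$ and Poincaré duality gives $\dim_{\Q}H_{2}(M;\Q)=\dim_{\Q}H^{1}(M;\Q)=n$. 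Composing the surjections yields $\binom{n}{2}\le n$, i.e. $n\le3$, contradicting $n\ge4$; this proves the theorem. The substantive part is the first step — converting the gcd‑level statement ``$\Delta_{M}=1$'' into the vanishing of the \emph{whole} rational Alexander module, which genuinely requires Poincaré duality over $\Lambda$ together with careful control of the universal‑coefficient spectral sequence and of the normalization of $\Delta_{M}$. The second step, by contrast, is a short dimension count, the only mildly surprising feature being that the crude inequality $\binom{n}{2}\le n$ already suffices.
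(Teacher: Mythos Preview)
Your proof is correct and takes a genuinely different route from the paper's. Both arguments terminate in the same elementary inequality $\binom{n}{2}\le n$, but they reach it along different paths. The paper works with \emph{finite} abelian covers: it invokes the Shalen--Wagreich inequality $d_{p}(\tilde M_{p})\ge\binom{r}{2}$ for the cover $\tilde M_{p}\to M$ with deck group $(\F_{p})^{r}$, and then uses the Fox--Turaev torsion formula together with Hironaka's formula for $b_{1}$ of finite abelian covers to show that $\Delta_{M}=1$ forces $d_{p}(\tilde M_{p})=r$. You instead work directly with the \emph{infinite} free abelian cover: Poincar\'e duality over $\Lambda=\Q[H]$ and the universal-coefficient spectral sequence convert $\Delta_{M}=1$ (hence $\mathrm{codim}\,\mathrm{supp}\,A_{1}\ge 2$) plus $n\ge 3$ into the vanishing of the entire rational Alexander module $A_{1}$, after which the five-term exact sequence and Hopf's theorem give the dimension count. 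Your approach is more self-contained --- it avoids quoting Shalen--Wagreich, Fox--Turaev and Hironaka --- at the price of heavier homological algebra (grade, the UCSS, $\mathrm{PD}_{3}$ duality over $\Lambda$), and it yields the sharper intermediate statement $H_{1}(\widehat M;\Q)=0$ whenever $\Delta_{M}=1$ and $b_{1}M\ge 3$, which the paper's finite-cover method does not directly produce. Incidentally, in this paper's convention $\Delta_{M}$ is \emph{defined} as the $0$th order of $H_{1}(\widehat M;\Z)$, so your parenthetical caution about normalizations is unnecessary here: $\Delta_{M}=1$ literally says that $\mathrm{ord}_{\Lambda}A_{1}$ is a unit.
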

 
  A consequence of this is the following negative result: not every symmetric polynomial occurs as the Alexander polynomial
 for a 3-manifold having $b_{1}\geq 4$.

\vspace{3mm}

\noindent {\bf Acknowledgements:} This paper is based on results contained in my PhD thesis, which was supervised by Marc Lackenby. I would also like to thank Cameron Gordon for helpful conversations.

\section{Alexander Polynomial}

From this point on, all 3-manifolds will be assumed to be orientable.

 We begin by reviewing a definition of the Alexander polynomial which employs absolute homology \cite{Ma}, \cite{Ro}.  This definition differs from those that appear in \cite{Blan,FoxI,Mc,Turaev0}  which use either relative homology or Fox calculus.   A proof of the equivalence of all three definitions can be found in \cite{me} (the equivalence is in fact implicit in Theorem 2.7 of \cite{Ma} as well as Theorem 16.5 of \cite{Turaevbook}).
 %Bellow we give the absolute homology definition; our exposition follows roughly that given by \cite{Mc}
%for relative homology, {\it mutatis mutandis}.

Let $\psi :G\rightarrow H$ be an epimorphism of a finitely presented group $G$ onto a finitely generated free abelian group $H$ of rank $n$.  Denote by $\Z [H]$ the group ring associated to $H$ i.e.\ the ring of formal
finite linear combinations
\[ n_{1}h_{1}+\dots +n_{k}h_{k},\quad n_{i}\in\Z,\; h_{i}\in H,\]
and the product in $\Z [H]$ is defined by linear extension of the product of $H$.  Note that if we choose a basis
$t_{1},\dots ,t_{n}$ of $H$ then
we may write elements of $H$ as
\[  t^{I} = t_{1}^{i_{1}}\cdots t_{n}^{i_{n}} \] for $i_{1},\dots ,i_{n}\in\Z$, so that the elements of  $\Z [H]$ may be viewed as a multivariable Laurent polynomials in the 
multivariable $t$.

Let $(X,x)$ be a pointed $CW$-complex whose $0$-skeleton consists of the base point $x$, such that $G=\pi_{1}(X,x)$.  Let 
\[ p_{\psi} \colon \widehat{X}_{\psi}\longrightarrow X\] be the normal covering space corresponding to $\psi$: that is, the covering indexed by ${\rm Ker}(\psi)\subset G$ with deck group $G/{\rm Ker}(\psi)\cong H$. 

 The {\bf {\em Alexander module}} is defined to be 
\begin{equation}\label{firstdefi}
A_{\psi}=H_{1}(\widehat{X}_{\psi})=H_{1}(\widehat{X}_{\psi};\;\mathbb{Z}),\end{equation}
where its structure of $\Z [H]$-module comes from the action of $H$ on $\widehat{X}_{\psi}$ by deck transformations.

For any finitely presented module $A$ over $\mathbb{Z}[H]$ consider a free resolution 
 \[ \mathbb{Z}[H]^{m} \stackrel{P}{\longrightarrow}  \mathbb{Z}[H]^{n}\longrightarrow  A
\longrightarrow 0 .\]
Such a resolution may be defined using a presentation $A=\langle x_{1},\dots x_{n}|r_{1},\dots ,r_{m}\rangle$:
we take $ \mathbb{Z}[H]^{n}=\langle x_{1},\dots , x_{n}| \;\;\rangle$, $\mathbb{Z}[H]^{m} = \langle R_{1},\dots ,R_{m}| \;\;\rangle$
with $P(R_{i}) =r_{i}$. 
Note that we may assume, without loss of generality, that $m\geq n$.
One can represent $P$ by an $m\times n$ matrix
also denoted $P$.  
For each $d=0,\dots , n$ we define the {\bf {\em dth elementary ideal}} $I_{d}(A)\subset \mathbb{Z}[H]$ to be the ideal generated by the $(n-d)\times (n-d)$ minors of the matrix $P$.  These ideals are independent of the resolution of $A$
and form a chain
\[   I_{0}(A)\subset I_{1}(A)\subset \cdots \subset I_{n}(A)=\mathbb{Z}[H] .\]
The {\bf {\em dth order ideal}} is the smallest principal ideal containing $I_{d}(A)$, where any generator $\Delta_{d}(A)$
of it is called a {\bf {\em dth order}} of $A$; it is well-defined up to multiplication by units.  The $d$th order can also be defined as the greatest
common divisor of the $(n-d)\times (n-d)$ minors of the matrix $P$ (which is well-defined since $\mathbb{Z}[H]$ is a unique factorization domain).

When $A=A_{\psi}$ we denote by 
\begin{itemize}
\item $P_{\psi}$ any presentation matrix and call it
an {\bf {\em Alexander matrix}}.  
\item $I_{\psi}$ the $0$th elementary ideal of a presentation matrix $P_{\psi}$ and call it the {\bf {\em Alexander ideal}}.
\item $\Delta_{\psi}$ a $0$th order of $A_{\psi}$ and call it an {\bf {\em Alexander polynomial}}.
\end{itemize}

Let $M$ be a compact manifold with $\pi_{1}M$ finitely presented and with first Betti number $b_{1}M=n$.  
Let $H(M)=H_{1}M/{\rm Tor}(H_{1}M)\cong\Z^{n}$, where ${\rm Tor}(H_{1}M)$ is the torsion subgroup of $H_{1}M$, and consider the epimorphism
\[ \psi^{\text{fr-ab}}: \pi_{1}M\longrightarrow H_{1}M\longrightarrow H(M),\]
where the first map is the abelianization and the second map is the projection.
The normal covering associated to $K^{\text{fr-ab}}={\rm Ker}(\psi^{\text{fr-ab}})$
\[  p:\widehat{M}\longrightarrow M \]
is called the {\bf {\em universal free abelian cover}} of $M$.

%\begin{lemm}\label{abelemma}
%Let $G$ be a group and suppose that $N$ is a normal subgroup of $G$.   If $G/N$ is abelian then 
%$[G,G]<N$.
%\end {lemm}

%\begin{proof}  Since $G/N$ is abelian, $G^{\rm ab}$ projects onto $G/N$. But this means $ [G,G]<N$.
%\end{proof}

\begin{prop}  Let $H$ be a finitely generated free abelian group and let $\psi :\pi_{1}M\rightarrow H$ be an epimorphism
with associated covering $p_{\psi}: \widehat{M}_{\psi}\rightarrow M$.  Then there exists a covering
\[  q_{\psi}:\widehat{M}\longrightarrow   \widehat{M}_{\psi}  \]
such that $p_{\psi}\circ  q_{\psi}=p$.
\end{prop}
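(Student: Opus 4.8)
The plan is to exhibit $\widehat{M}$ (the universal free abelian cover of $M$) as an intermediate cover by producing it as a covering of $\widehat{M}_\psi$, using the standard correspondence between connected covers of a space and subgroups of its fundamental group. First I would pass to the fundamental group level: the cover $p_\psi$ corresponds to the subgroup $\mathrm{Ker}(\psi)\subset\pi_1 M$, and the universal free abelian cover $p$ corresponds to $K^{\text{fr-ab}}=\mathrm{Ker}(\psi^{\text{fr-ab}})$. The key algebraic fact I would establish is the inclusion $K^{\text{fr-ab}}\subset\mathrm{Ker}(\psi)$. This follows because $\psi\colon\pi_1 M\to H$ takes values in a \emph{free abelian} group, so it must factor through the abelianization $\pi_1 M\to H_1 M$; moreover, since $H$ is torsion-free, the induced map $H_1 M\to H$ kills the torsion subgroup $\mathrm{Tor}(H_1 M)$ and hence factors through $H(M)=H_1 M/\mathrm{Tor}(H_1 M)$. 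Thus $\psi = \bar\psi\circ\psi^{\text{fr-ab}}$ for some homomorphism $\bar\psi\colon H(M)\to H$, which immediately gives $K^{\text{fr-ab}}=\mathrm{Ker}(\psi^{\text{fr-ab}})\subset\mathrm{Ker}(\bar\psi\circ\psi^{\text{fr-ab}})=\mathrm{Ker}(\psi)$.

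Once this subgroup inclusion is in hand, the existence of $q_\psi$ is essentially formal. Because $\mathrm{Ker}(\psi)$ is normal in $\pi_1 M$ and contains $K^{\text{fr-ab}}$, general covering space theory (applied to the sufficiently nice space $M$, or to a $CW$-model for it as in the construction of $\widehat{X}_\psi$ above) provides a unique covering map $q_\psi\colon\widehat{M}\to\widehat{M}_\psi$ with $p_\psi\circ q_\psi = p$, respecting basepoints. Concretely, $q_\psi$ sends the deck group $H(M)$ of $p$ onto the deck group $H$ of $p_\psi$ via $\bar\psi$, and on points it is the natural quotient identifying $\mathrm{Ker}(\psi)/K^{\text{fr-ab}}$-orbits. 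I would also note that $q_\psi$ is itself a normal covering, since both $K^{\text{fr-ab}}$ and $\mathrm{Ker}(\psi)$ are normal in $\pi_1 M$.

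There is no serious obstacle here; the only point requiring a word of care is the factorization argument, i.e.\ verifying that an arbitrary epimorphism onto a free abelian group automatically factors through $H(M)$. This is where all the content lies, and it is short: abelian target forces factoring through $H_1 M$, torsion-free target forces killing $\mathrm{Tor}(H_1 M)$. Everything after that is the standard lifting criterion for covering spaces, so the proof will be only a few lines.
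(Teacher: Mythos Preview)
Your proposal is correct and follows essentially the same route as the paper: both arguments establish the factorization $\psi=\bar\psi\circ\psi^{\text{fr-ab}}$ by first passing through the abelianization $H_{1}M$ (since $H$ is abelian) and then through $H(M)$ (since $H$ is torsion-free), deduce the kernel inclusion $K^{\text{fr-ab}}\subset\mathrm{Ker}(\psi)$, and invoke covering-space theory. Your write-up is slightly more explicit about the final lifting step, but there is no substantive difference.
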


\begin{proof}  Let $K_{\psi}= {\rm Ker}(\psi )$ which is the image of $\pi_{1} \widehat{M}_{\psi}$ by 
$(p_{\psi})_{\ast}$.  Since $H_{1}M$ is the abelianization
of $\pi_{1}M$, there is an epimorphism $r:H_{1}M\rightarrow H$ with $\psi= r\circ \psi^{\text{fr-ab}}$.
But $H$ is torsion free, therefore
all torsion elements of $H_{1}M$ are mapped to $0$.  This means that there exists
an epimorphism $r' :H(M)\rightarrow H$ so that $\psi= r'\circ \psi^{\text{fr-ab}}$.  We conclude
that $K^{\text{fr-ab}}\subset K_{\psi}$, and the Proposition follows from this.
 %herefore there
%is an inclusion $K^{\text{fr-ab}}\subset$.
%By Lemma \ref{abelemma}, $K_{\psi}\supset {\rm Ker}(\psi^{\rm ab})$.  
%The existence of $ \hat{q}_{\psi}$ follows from covering space theory. 
\end{proof}

\begin{coro}\label{Alexunique} Any cover of $M$ with deck group $H(M)$ is isomorphic to the universal free abelian cover $p:\widehat{M}\longrightarrow M$.
\end{coro}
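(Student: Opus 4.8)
The plan is to translate the statement into the Galois correspondence for covering spaces and then feed the resulting data into the Proposition just proved. A (connected) cover of $M$ with deck group $H(M)$ is regular, hence is classified by a normal subgroup $N\triangleleft\pi_{1}M$ together with an isomorphism $\pi_{1}M/N\cong H(M)$; writing $\psi\colon\pi_{1}M\to H(M)$ for the composite of the quotient map with this isomorphism, we get an epimorphism onto a finitely generated free abelian group with $\mathrm{Ker}(\psi)=N$. Since a connected cover of $M$ is determined up to isomorphism over $M$ by the subgroup of $\pi_{1}M$ it classifies, and $p\colon\widehat{M}\to M$ is by definition the cover classified by $K^{\text{fr-ab}}$, it suffices to prove the equality $N=K^{\text{fr-ab}}$.

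One inclusion is immediate from the Proposition applied to this $\psi$ (with $H=H(M)$): it produces a covering $q_{\psi}\colon\widehat{M}\to\widehat{M}_{\psi}$ with $p_{\psi}\circ q_{\psi}=p$, which is precisely the statement that $K^{\text{fr-ab}}\subseteq K_{\psi}=N$. (Equivalently one can simply rerun the short argument in the proof of the Proposition: $\pi_{1}M/N$ is abelian, so $\psi$ factors through $H_{1}M$, and it is torsion-free, so it factors further through $H(M)=H_{1}M/\mathrm{Tor}(H_{1}M)$, forcing $K^{\text{fr-ab}}\subseteq N$.)

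For the reverse inclusion I would invoke that $H(M)\cong\Z^{n}$ is Hopfian. The inclusion $K^{\text{fr-ab}}\subseteq N$ induces a surjective homomorphism $\pi_{1}M/K^{\text{fr-ab}}\to\pi_{1}M/N$, that is, a surjective endomorphism of $\Z^{n}$; any surjective endomorphism of a finitely generated free abelian group is injective, so its kernel $N/K^{\text{fr-ab}}$ is trivial and $N=K^{\text{fr-ab}}$. Running the two inclusions back through the covering-space dictionary identifies the given cover with $p\colon\widehat{M}\to M$, as required.

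The mathematical content is light once the Proposition is in hand: the only points that need a little care are confirming that a cover "with deck group $H(M)$" is regular (so that the classification by normal subgroups applies) and recalling the Hopfian property of $\Z^{n}$ for the second inclusion. I do not anticipate a genuine obstacle here.
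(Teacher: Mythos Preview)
Your argument is correct and is essentially the natural way to fill in the details the paper leaves implicit: the paper states this Corollary without proof, immediately after the Proposition, so the intended reasoning is precisely the covering-space dictionary plus the observation that $K^{\text{fr-ab}}\subseteq N$ together with $\pi_{1}M/K^{\text{fr-ab}}\cong\pi_{1}M/N\cong\Z^{n}$ forces equality. Your use of the Hopfian property of $\Z^{n}$ to close the argument is exactly what is needed and matches the paper's implicit approach.
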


When $\psi=\psi^{\rm fr-ab}$ we denote $A_{M}$, $I_{M}$ and $\Delta_{M}$,  and refer to them as the {\bf {\em Alexander module, ideal}} and {\bf {\em polynomial}}
of $M$.  By Corollary \ref{Alexunique}, these invariants depend only on $M$.

\section{An Analog of a Theorem of Levine for Closed 3-manifolds}\label{LevineSection}

First we define three notions of symmetry that will be relevant for us.  

Let $H\cong\Z^{n}$, denote $\Lambda=\Z [H]$
the group ring and by $\Lambda^{\times}$ the group of units.
The inversion map in $H$, $h\mapsto h^{-1}$, extends to an automorphism $\iota:\Lambda\rightarrow\Lambda$.  
%In what follows we will
%write \[ f%\circeq
 %g\] for $f,g\in\Lambda$ if there exists $u\in \Lambda^{\times}$ with $f=ug$.

We say that a polynomial
$f\in\Lambda$ is 
\begin{itemize}
\item {\bf {\em symmetric}} if $\iota (f)=f$.   
\item {\bf {\em unit symmetric}} if there exists a unit $u\in \Lambda^{\times}$ such
that $uf$ is symmetric i.e.\ $\iota (uf) =uf$.
\item {\bf {\em mod unit symmetric}} if 
%$ \iota (f) %\circeq f $i.e.\ 
 $\iota (f)=uf$ for some unit $u\in\Lambda^{\times}$.  
\end{itemize}
The definitions which we have just described are given in descending order of strength. For example, $t^{2}+t+1$ is unit symmetric
but not symmetric.  And $t-1$ is mod unit symmetric but not unit symmetric.

%Actually in most texts on this subject the word symmetric is used when unit symmetric is what is meant.  
%We will sometimes use the word symmetric for unit symmetric if the context makes things clear.

\vspace{5mm}

Now we recall Levine's method for generating link polynomials.

\begin{theo}[Levine, \cite{Le2}]\label{LevinesTheorem} Let $\Delta_{L}$ be the Alexander polynomial of an $n$-component link $L$, and let $\lambda = \Sigma \; c_{I}t^{I}\in \Z [H]$ satisfy the following conditions:
\begin{enumerate}
\item[{\rm 1.}] $\lambda$ is symmetric.
\item[{\rm 2.}] $\lambda (1,\cdots ,1)=1$.
\end{enumerate}
Then there exists an oriented link $L'$ with the same number of components as  $L$ such that 
\[ \Delta_{L'}=\Delta_{L}\cdot \lambda.\] 
\end{theo}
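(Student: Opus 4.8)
The plan is to realize the multiplication by $\lambda$ as a \emph{local} modification of the link complement that grafts a prescribed cyclic summand onto the Alexander module. Write $X=\SI^{3}\setminus L$ for the exterior and let $\widehat{X}$ be its universal free abelian cover, so that $A_{L}=H_{1}(\widehat{X})$ is a finitely presented module over $\Lambda=\Z[H]$ with $H=H_{1}X\cong\Z^{n}$, and $\Delta_{L}$ is a $0$th order of $A_{L}$. Because the $0$th order (the generator of the smallest principal ideal containing $I_{0}$) is multiplicative on direct sums of finitely presented $\Lambda$-modules, it suffices to construct a link $L'$ with the same number of components whose Alexander module satisfies $A_{L'}\cong A_{L}\oplus\Lambda/(\lambda)$. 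Indeed, the cyclic module $\Lambda/(\lambda)$ has presentation matrix $[\lambda]$, hence $0$th elementary ideal $(\lambda)$ and $0$th order $\lambda$, so a direct-sum splitting of this form yields $\Delta_{L'}=\Delta_{L}\cdot\lambda$ up to units, which is all that $\Delta$ is defined up to.

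To produce such an $L'$, I would work inside a small ball $B\subset\SI^{3}$ meeting $L$ in a single unknotted, unlinked arc, and replace the trivial tangle $(B,B\cap L)$ by a carefully chosen tangle $(B,T)$ with the same endpoints, so that the component count is preserved. Following the Seifert-surface handle-attachment technique in its multivariable form, one builds $T$ from a band carrying a clasped, framed pattern whose relative Alexander module, computed from the covering of the tangle exterior, is exactly $\Lambda/(\lambda)$. The two hypotheses on $\lambda$ are precisely the constraints that make such a local model exist. The augmentation condition $\lambda(1,\dots ,1)=1$ forces $\Lambda/(\lambda)$ to vanish after setting every $t_{i}=1$, so that $H_{1}(\SI^{3}\setminus L';\Z)\cong\Z^{n}$ is preserved and $L'$ remains an honest $n$-component link in $\SI^{3}$; and the symmetry of $\lambda$ reflects, via Poincar\'e--Lefschetz duality on the link exterior, the fact that every link Alexander polynomial is symmetric up to units, so that symmetry must be imposed on the added factor for the product to be realizable.

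The verification then proceeds by Mayer--Vietoris. Decompose $X'=\SI^{3}\setminus L'$ as the union of $X\setminus B$ with the modified piece along the twice-punctured sphere $\partial B$, lift this decomposition to the universal free abelian cover, and run the Mayer--Vietoris sequence of $\Lambda$-modules. The augmentation condition is exactly what forces the connecting homomorphisms supported on the gluing region to vanish; the sequence then splits and delivers $A_{L'}\cong A_{L}\oplus\Lambda/(\lambda)$, completing the computation. The orientations on $L'$ are inherited from those of $L$ together with a chosen orientation of the inserted band, so $L'$ is oriented as required.

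The main obstacle is twofold, and both parts live in the local model. First, one must exhibit an explicit tangle $(B,T)$ whose relative Alexander module is precisely $\Lambda/(\lambda)$; this is where the combinatorics of realizing an arbitrary symmetric, augmentation-$1$ Laurent polynomial as the order of a concretely presented module has to be carried out, generalizing Seifert's one-variable handle construction to $n$ variables. Second, one must check that the gluing along $\partial B$ is homologically clean, i.e.\ that the Mayer--Vietoris connecting maps genuinely die over $\Lambda$, so that $\Lambda/(\lambda)$ splits off as a direct summand rather than appearing merely as a subquotient. Everything else---multiplicativity of $0$th orders, preservation of the component count, and the bookkeeping of the deck action on $\widehat{X'}$---is routine once the local model is in hand.
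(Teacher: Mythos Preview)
The paper does not prove this theorem; it is quoted from Levine \cite{Le2} as background, and the paper's own contribution is the analogue for closed $3$-manifolds (Theorem~\ref{generalizedLevinestate}). So there is no ``paper's proof'' of this statement to compare against directly. That said, your outline is in the spirit of Levine's method and of the paper's proof of the closed-manifold analogue, and the algebraic endgame---arranging $A_{L'}\cong A_{L}\oplus\Lambda/(\lambda)$ and invoking multiplicativity of $0$th orders---is exactly right.

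There is, however, a genuine gap in your local model. You propose to work entirely inside a ball $B$ meeting $L$ in a single arc and to replace $(B,B\cap L)$ by a tangle $(B,T)$. But any such $B\setminus T$ has fundamental group mapping to $H\cong\Z^{n}$ through the single meridian $t_{i}$ of the component containing the arc; consequently the $\Lambda$-module contributed by the lift of $(B,T)$ is induced from a $\Z[t_{i}^{\pm1}]$-module, and its order can only be a polynomial in $t_{i}$ alone. A purely local tangle replacement therefore cannot realize a genuinely multivariable $\lambda$. What is needed---and what Levine actually does, and what the paper does in its proof of Theorem~\ref{generalizedLevinestate}---is to let the inserted bands leave the ball and travel through the ambient space so that each band represents a prescribed class $t^{I}\in H$; the clasp at the end of each band then produces the coefficient $c_{I}$. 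In the paper's version this is precisely the construction in Figure~\ref{Band}: start from a homotopically trivial curve, attach bands $u_{I}$ with $[u_{I}]=\gamma_{\pm I}$, introduce $c_{I}$ claspings, and then perform surgery. The Mayer--Vietoris/long-exact-sequence step you anticipate is exactly Proposition~\ref{1generator}, which shows the complement of the band curve contributes a free $\Lambda$-summand, and the surgery imposes the single relation $\lambda\mu=0$, giving the block-diagonal presentation $\left(\begin{smallmatrix}P&0\\0&\lambda\end{smallmatrix}\right)$.

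So your high-level plan is correct, but the crucial construction cannot be confined to a ball meeting $L$ in one arc; the bands must reach out to all of $H$. Once you allow that, your Mayer--Vietoris outline becomes the argument of Proposition~\ref{1generator} and Theorem~\ref{generalizedLevinestate}, transported back to the link setting.
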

%EXPLAIN THE RELATION OF THE SECOND CONDITION $\lambda(1,1)=1$ AND THE USE OF A 1-SURGERY. How does this condition appears in LevineÇs paper?

Now we prove an analogue of Levine's Theorem for closed 3-manifolds.  By this we mean the following:
let $M$ be a closed 3-manifold with $b_{1}M=n$ and let $\lambda$ be a  symmetric Laurent polynomial in $n$-variables with non-zero trace.  Then we will prove that
there exists a closed 3-manifold $M'$ with  $b_{1}M'=n$ and whose Alexander polynomial is $\lambda\cdot\Delta_{M}$.

\begin{prop}\label{1generator} 
 
Let $M$ be a 3-manifold with $b_{1}M=n$,
%maybe non-compact, for example the universal abelian cover, then we want a formula for $H_{1}(M-K)$
$S$ a simple closed curve in M which is homotopically trivial and $N$ a tubular neighborhood of $S$.
Let $p:\widehat{M}\rightarrow M$ be the universal free abelian cover and let $\widehat{X}=\widehat{M}-p^{-1}(N)$.
Then we have the following isomorphism of $\Lambda$-modules
\[H_{1}(\widehat{X})\cong H_{1}(\widehat{M}) \oplus \Lambda.\]
%\[ b_{1}(M-\text{\r{ N}})=  \left\{
                                                           %     \begin{array}{ll}
                                                            %     b_{1}M & \text{if $\mu=0$} \\
                                                             %    b_{1}M+1 & \text{otherwise}
                                                                 %\end{array}
                                              %    \right.
% \]                                                  

%CHECK HOW TO WRITE THIS CORRECTLY

%we want in the proof that the lift $\tilde{X}-\tilde{S}$ has one generator

\end{prop}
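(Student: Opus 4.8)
The plan is to compute $H_1(\widehat X)$ by comparing it with $H_1(\widehat M)$ via a Mayer–Vietoris argument, using the decomposition $\widehat M = \widehat X \cup p^{-1}(N)$. Since $S$ is homotopically trivial in $M$, the loop $S$ lifts to a loop in $\widehat M$, so the preimage $p^{-1}(N)$ is a disjoint union of copies of the solid torus $N$ indexed by $H$; as a $\Lambda$-module, $H_*(p^{-1}(N))$ is $\Lambda$ in degrees $0$ and $1$ (the degree-$1$ part generated freely by a single lift of the core $S$) and $0$ above. Likewise $p^{-1}(\partial N)$ is a disjoint union of copies of $\partial N \cong \SI^1\times\SI^1$ indexed by $H$; because the meridian $\mu$ of $N$ bounds a disk in $N$ and the longitude is (a translate of) the lift of $S$, we get $H_*(p^{-1}(\partial N))\cong \Lambda$ in degrees $0,1$, and $H_2 \cong \Lambda$ generated by the fundamental class of one torus.

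First I would set up the Mayer–Vietoris sequence for $\widehat M = \widehat X \cup (\text{open thickening of }p^{-1}(N))$ with intersection homotopy equivalent to $p^{-1}(\partial N)$, all as sequences of $\Lambda$-modules (the maps are $H$-equivariant because the decomposition is $H$-invariant). The relevant segment reads
\[
H_2(\widehat M)\to H_1\bigl(p^{-1}(\partial N)\bigr)\to H_1(\widehat X)\oplus H_1\bigl(p^{-1}(N)\bigr)\to H_1(\widehat M)\to \widetilde H_0\bigl(p^{-1}(\partial N)\bigr).
\]
Then I would analyze the map $H_1(p^{-1}(\partial N))\to H_1(p^{-1}(N))$: on each component it sends the longitude isomorphically to the core and kills the meridian, so as a $\Lambda$-module map $\Lambda\to\Lambda$ it is an isomorphism. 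This immediately splits off the $H_1(p^{-1}(N))\cong\Lambda$ summand: the composite $\Lambda \cong H_1(p^{-1}(\partial N)) \to H_1(\widehat X)\oplus\Lambda \to \Lambda$ is an isomorphism, hence $H_1(p^{-1}(\partial N))$ maps isomorphically onto the second factor, the connecting map into $H_1(p^{-1}(\partial N))$ is zero, and the sequence reduces to a short exact sequence $0\to H_1(\widehat X)\to H_1(\widehat M)\to 0$ after removing the $\Lambda$'s — wait, that would give $H_1(\widehat X)\cong H_1(\widehat M)$, not the claimed extra summand, so the bookkeeping has to be done in the other direction.

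More carefully: the inclusion $\widehat X \hookrightarrow \widehat M$ is obtained by filling in the solid tori, i.e. by adding $2$-cells along the meridians (and then $3$-cells, which do not affect $H_1$). So $H_1(\widehat M)$ is the quotient of $H_1(\widehat X)$ by the $\Lambda$-submodule generated by one meridian, and the class of the core $S$ in $H_1(\widehat X)$ — call it $\sigma$ — becomes the relevant generator. The correct claim is that $\sigma$ generates a free $\Lambda$-summand of $H_1(\widehat X)$: the meridian $\mu$ of $N$, which bounds a disk in $\widehat M\setminus\widehat X$, also bounds in $\widehat X$? No — $\mu$ does not bound in $\widehat X$. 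Instead, I would use that $S$ is null-homotopic in $M$ and lifts to $\widehat M$, so there is a disk in $\widehat M$ bounded by a lift $\widetilde S$; pushing this disk off $N$ exhibits $\widetilde S$ (hence a meridian circle on $\partial N$ together with an annulus) showing $\sigma$ lies in the image of $H_1(p^{-1}(\partial N))$ and that, modulo the torus relations, $\sigma$ is unconstrained. Concretely: $\widehat M\setminus\widehat X = \bigsqcup_H \mathrm{int}(N)$, and the long exact sequence of the pair $(\widehat M,\widehat X)$ with excision $H_*(\widehat M,\widehat X)\cong H_*(p^{-1}(N),p^{-1}(\partial N))\cong \bigoplus_H H_*(N,\partial N)$ gives $H_2(\widehat M,\widehat X)\cong \Lambda$ (generated by a meridian disk rel boundary) and $H_3(\widehat M,\widehat X)\cong\Lambda$; feeding this into
\[
H_3(\widehat M,\widehat X)\to H_2(\widehat X)\to H_2(\widehat M)\to H_2(\widehat M,\widehat X)\to H_1(\widehat X)\to H_1(\widehat M)\to H_1(\widehat M,\widehat X)=0
\]
yields a four-term exact sequence $0\to \Lambda \xrightarrow{\partial} H_1(\widehat X)\to H_1(\widehat M)\to 0$ once I check the connecting map $H_2(\widehat M)\to H_2(\widehat M,\widehat X)=\Lambda$ is zero. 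That vanishing is where the hypothesis that $S$ is null-homotopic (not merely null-homologous) enters: a null-homotopy of $S$ in $M$ lifts to a map of a disk in $\widehat M$ whose boundary is $\widetilde S$, and this disk can be isotoped to meet $N$ in exactly a meridian disk, giving a $2$-cycle in $\widehat M$ whose image in $H_2(\widehat M,\widehat X)$ is the generator — hence $\partial\colon H_2(\widehat M,\widehat X)\to H_1(\widehat X)$ has image killed by the relation coming from this cycle, forcing the preceding map $H_2(\widehat M)\to H_2(\widehat M,\widehat X)$ to be surjective; reconciling this with the desired conclusion, the upshot is the short exact sequence $0\to\Lambda\to H_1(\widehat X)\to H_1(\widehat M)\to 0$. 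Finally I would split it: the meridian $\mu\in H_1(\widehat X)$ generating the $\Lambda$ from $\partial$ maps under $H_1(\widehat X)\to H_1(p^{-1}(\partial N))\cong\Lambda^2 \to (\text{meridian coordinate})\cong\Lambda$ isomorphically, giving a retraction $H_1(\widehat X)\to\Lambda$ onto this summand, whence $H_1(\widehat X)\cong H_1(\widehat M)\oplus\Lambda$ as $\Lambda$-modules.

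The main obstacle I anticipate is the splitting and, more subtly, verifying that the connecting homomorphism behaves exactly as claimed — i.e. that the $\Lambda$ coming from the excised solid torus really does inject into $H_1(\widehat X)$ and really is a direct summand. The injectivity is precisely the statement that a meridian of $N$ has infinite order in $H_1(\widehat X)$ and generates a free rank-one $\Lambda$-module there; this is plausible because $\widehat X$ deformation retracts onto something containing $p^{-1}(\partial N)$ in its boundary, but pinning it down cleanly is the crux. I expect the homotopy-triviality of $S$ to be used exactly once, to guarantee the single lift (so $p^{-1}(N)$ is $|H|$ disjoint solid tori rather than a connected cover) and hence that the relevant modules are free of rank one over $\Lambda$ rather than induced from a proper subgroup.
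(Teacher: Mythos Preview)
Your overall architecture---the long exact sequence of the pair $(\widehat M,\widehat X)$, excision to identify $H_*(\widehat M,\widehat X)\cong\bigoplus_H H_*(N,\partial N)$, and the resulting short exact sequence
\[
0\longrightarrow \Lambda \stackrel{\partial}{\longrightarrow} H_1(\widehat X)\longrightarrow H_1(\widehat M)\longrightarrow 0
\]
---is the same as the paper's. But both of the remaining steps (injectivity of $\partial$ and the splitting) are not carried out correctly, and this is where the real content lies.

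\textbf{Injectivity of $\partial$.} You need the map $H_2(\widehat M)\to H_2(\widehat M,\widehat X)\cong\Lambda$ to be \emph{zero}. Your argument says the opposite: you produce a null-homotopy disk for a lift $\widetilde S$ and claim it gives a $2$-cycle in $\widehat M$ hitting the generator of $H_2(\widehat M,\widehat X)$, which would make the map \emph{surjective} and force $\partial=0$. That would yield $H_1(\widehat X)\cong H_1(\widehat M)$, exactly the conclusion you already rejected. Moreover the disk is not a cycle at all (it has boundary $\widetilde S$), so it does not represent anything in $H_2(\widehat M)$; and if you excise its intersection with $p^{-1}(N)$ you only learn that the meridian $\mu$ is homologous in $\widehat X$ to a combination of meridians of \emph{other} lifts of $N$---which is no contradiction. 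The paper's argument is this: suppose $f\mu=0$ in $H_1(\widehat X)$ for some $f=\sum c_h h\in\Lambda$. Then $f\mu$ bounds a compact surface in $\widehat X$; capping the boundary circles with meridian disks gives a closed oriented surface $\Sigma\subset\widehat M$. Each lift $\widehat S_h$ is null-homotopic in $\widehat M$ (because $S$ is), hence null-homologous, so $\widehat S_h\cdot\Sigma=0$. But by construction $\widehat S_h\cdot\Sigma=c_h$. Hence $f=0$.

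\textbf{Splitting.} There is no natural map $H_1(\widehat X)\to H_1(p^{-1}(\partial N))$; the inclusion of the boundary tori induces a map the other way, so your proposed retraction does not exist as written. The paper constructs the retraction geometrically, again using the homotopical triviality of $S$: each lift $\widehat S_h$ bounds a compact oriented surface $\Sigma_h\subset\widehat M$, and one defines $\phi\colon H_1(\widehat X)\to\Lambda\mu$ by $\ell\mapsto\sum_h(\ell\cdot\Sigma_h)\,h\mu$. Since $h\mu\cdot\Sigma_{h'}=\delta_{h,h'}$, this is a $\Lambda$-linear splitting of the short exact sequence.

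A minor slip earlier: $H_1(p^{-1}(\partial N))\cong\Lambda^2$, not $\Lambda$, since $H_1(\mathbb S^1\times\mathbb S^1)\cong\Z^2$; you silently correct this later.
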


\begin{proof}  

Consider the pair $(\widehat{M},\widehat{X})$ and denote $\widehat{N} = p^{-1}(N)$.   By excision on the interior of $\widehat{X}$, we have that
\[ H_{*}(\widehat{M},\widehat{X}) \cong H_{*}(\overline{\widehat{N}}, \partial \overline{\widehat{N}}).\] 
Since the deck group $H$ acts by homeomorphisms on the pairs $(\widehat{M},\widehat{X})$, $(\overline{\widehat{N}}, \partial \overline{\widehat{N}})$,
the excision isomorphism is an isomorphism of $\Lambda$-modules.
If we let $\overline{\widehat{N}}_{0}\supset \partial \overline{\widehat{N}}_{0}$ be a pair of fixed components of $\overline{\widehat{N}}\supset \partial \overline{\widehat{N}}$
(i.e.\ a fixed lift of $\overline{N}\supset \partial \overline{N}$) then we may write
\[  \overline{\widehat{N}} =  \bigsqcup_{h\in H} h( \overline{\widehat{N}}_{0})   \supset \partial \overline{\widehat{N}} = 
\bigsqcup_{h\in H} h( \partial\overline{\widehat{N}}_{0}).
 \]  
On the level of homology groups we have then
\[ H_{k} (\overline{\widehat{N}}, \partial \overline{\widehat{N}})\cong\bigoplus_{h\in H}H_{k}(\overline{N}, \partial \overline{N}) , \]
a direct sum of copies of $H_{k}(\overline{N}, \partial \overline{N})$ indexed by $H$.
%Let $X=M-N$ and consider the pair $(M,X)$.  By excision, we have that
%\[ H_{*}(M,X) \cong H_{*}(\bar{N}, \partial \bar{N}).\]  
By Lefschetz duality, since $\overline{N}- \partial \overline{N}=N$, we have that
\begin{equation}\label{lefschetz} H_{k}(\overline{N}, \partial \overline{N}) \cong H^{3-k}(N)\cong H^{3-k}(\SI^{1}) =   \left\{
                                                                \begin{array}{ll}
                                                                \Z & \text{if $k = 2,3.$} \\
                                                                 0 & \text{otherwise.}
                                                                 \end{array}
                                                    \right.
\end{equation}
We note that a generator of $H_{2}(\overline{N}, \partial \overline{N})$ is given by a disk $D$ whose boundary is a meridian of $N$.
Thus as a $\Lambda$-module,  we have
\[   H_{2} (\overline{\widehat{N}}, \partial \overline{\widehat{N}})\cong \Lambda ,  \]
with generator a disk in $ \overline{\widehat{N}}_{0}$ whose boundary is a meridian.
The generator of $H_{2}(\widehat{M},\widehat{X})\cong H_{2} (\overline{\widehat{N}}, \partial \overline{\widehat{N}}) $ corresponding to the disk generator $\widehat{D}_{0}$ of $H_{2} (\overline{\widehat{N}}, \partial \overline{N})$
is denoted $\nu$.  Note that the boundary of $\nu$ is equal to $\partial \widehat{D}_{0}$.

The long exact sequence of the pair $(\widehat{M},\widehat{X})$ (an exact sequence of $\Lambda$-modules) can now be written
\[ \cdots \longrightarrow H_{2}(\widehat{M},\widehat{X})\cong\Lambda \stackrel{\alpha}{\longrightarrow}
H_{1}(\widehat{X}) \stackrel{\beta}{\longrightarrow} H_{1}(\widehat{M})  \stackrel{\gamma}{\longrightarrow}  H_{1}(\widehat{M}, \widehat{X}) =0\longrightarrow\cdots
\]

Then $\gamma$ is the zero map
%since $H_{1}(\bar{N}, \partial \bar{N})=0$
and by exactness $\beta$ is surjective, so that:
%By the third isomorphism theorem which requires, gamma surjective.
\[ H_{1}(\widehat{M})\cong H_{1}(\widehat{X})/ {\rm Ker}(\beta). \]
Again by exactness ${\rm Ker }(\beta)= {\rm Im }(\alpha)$ and we obtain a short exact sequence
\begin{equation}\label{shorty}
0 \longrightarrow {\rm Im}(\alpha) \longrightarrow H_{1}(\widehat{X}) \longrightarrow H_{1}(\widehat{M}) \longrightarrow 0.
\end{equation}
Since, as indicated above, $H_{2}(\widehat{M},\widehat{X})\cong\Lambda$, the map $\alpha$ is of the form \[ \alpha : \Lambda \longrightarrow  H_{1}(\widehat{X}).\]
Thus if we can show that $\alpha$ is injective and that the sequence (\ref{shorty}) is split by a $\Lambda$-module map, this
will imply that $H_{1}(\widehat{X})\cong H_{1}(\widehat{M})\oplus \Lambda$.

%Thus the long exact sequence of the pair $(M,X)$ can be written
%\[ H_{2}(X) \stackrel{\alpha}{\longrightarrow} H_{2}(M) \stackrel{\beta}{\longrightarrow} H_{2}(M, X)\cong\Z \stackrel{\gamma}{\longrightarrow}
%H_{1}(X) \stackrel{\delta}{\longrightarrow} H_{1}(M)  \stackrel{\epsilon}{\longrightarrow}  H_{1}(M, X) =0\longrightarrow\cdots
%\]
%Then $\epsilon$ is the zero map
%since $H_{1}(\bar{N}, \partial \bar{N})=0$
%and by exactness $\delta$ is surjective, so that:
%By the third isomorphism theorem which requires, gamma surjective.
%\[ H_{1}(M)\cong H_{1}(X)/ Ker(\delta). \]
%Now, by exactness $Ker (\delta)= Im (\gamma)$ and we obtain a short exact sequence
%\begin{equation}\label{shorty}
%0 \longrightarrow Im(\gamma) \longrightarrow H_{1}(X) \longrightarrow H_{1}(M) \longrightarrow 0
%\end{equation}
%Since, as indicated above, $H_{2}(M, X)\cong\Z $, the map $\gamma$ is of the form \[ \gamma :  \Z \longrightarrow  H_{1}(X).\]

%Thus if we can show that $\gamma$ is injective and that the sequence (\ref{shorty}) is split, this
%will imply that $H_{1}(X)\cong H_{1}(M)\oplus \Z$.   \begin{color}{red}lifting the isomorphism doesn't makes sense \end{color} %along the $\Z^{n}$ cover
%gives the corresponding isomorphism described in the statement of the Proposition.

%aqui tenemos un dubujito, el generador de H_{2}(M-\text{\r{ N} es un disco en el toro solido.
%Note now that by exactness, $\gamma$ is injective $\Leftrightarrow$
%$\beta$ is the zero map$\Leftrightarrow$ $\alpha$ is surjective.
%is alpha surjective? why?

We begin by showing that $\alpha$ is injective i.e.\ we show that ${\rm Im}(\alpha)\cong \Lambda$.
%given that $K$ is homologically trivial in $M$.
Note first that ${\rm Im}(\alpha)$ is generated as a $\Lambda$-module by $\mu=\beta (\nu)$, and $\mu$ is in fact the meridian boundary of $\widehat{D}_{0}$.  Suppose that $\mu$ is a torsion element of $H_{1}(\widehat{X})$, i.e $f\mu=0$ for some 
$f=\sum c_{h}h\in\Lambda$.  We will show that $f=0$ i.e.\ that $c_{h}=0$ for all $h\in H$.
Now $f\mu$ bounds a compact surface in $\widehat{X}$ which when included in $\widehat{M}$ can be filled by meridian disks to form a closed orientable surface $\Sigma$ in $\widehat{M}$.
Write $\widehat{S}=\sqcup\widehat{S}_{h}$ where $\widehat{S}_{h}=h\widehat{S}_{0}$ and $\widehat{S}_{0}$ is a fixed lift of $S$ to $\widehat{M}$.  Since $S$ is homotopically trivial, each lift $\widehat{S}_{h}$ is also homotopically trivial.  
Then $\widehat{S}_{h}$ intersects $\Sigma$ $c_{h}$-times with the same orientation. But $\widehat{S}_{h}$ is homologically trivial, so its intersection number with an oriented surface is $0$.  Thus $c_{h}=0$ for all $h$.

%Let ${\rm Im}(\gamma)$ be generated as a module by $\mu$ and suppose that $\mu$ is a torsion element of $H_{1}(X)$, i.e $\mu^{n}=0$ for some $n>0$. %Then $\mu^{n}$ bounds a compact surface in $X$ which closes up to form a closed orientable surface $\Sigma$ in $M$. Then $K$ intersects $\Sigma$ $n$-% times with the same orientation. But $S$ is homologically trivial so the intersection number with an oriented surface is 0.  Thus $n=0$.

Now we want to show that the short exact sequence is split. Since $\widehat{S}_{h}$ is homologically trivial for each $h$, it bounds a compact oriented surface $\Sigma_{h}$ in $\widehat{M}$. Let 
\[ \phi : H_{1}(\widehat{X}) \longrightarrow {\rm Im}(\alpha ) =\Lambda\mu \] 
\[l \longmapsto \sum_{h\in H} (l\cdot \Sigma_{h})h\mu\] where  $l\cdot\Sigma_{h}$ is the intersection number with $\Sigma_{h}$.   Then since $h\mu$ has intersection number one with $\Sigma_{h}$ and $h\mu$ has intersection number zero with any $\Sigma_{h'}$ for $h' \ne h$,  
we have $\phi\circ \gamma (h\mu)=h\mu$.
Thus the exact sequence splits,  proving the Proposition.

 %Let $\gamma(1)=\mu $, $\mu \in H_{1}(M-\text{\r{ N}})$.  Then $Im (\gamma)= <\mu>=Ker (\delta)$ and
%  \[ H_{1}(M)\cong H_{1}(M-\text{\r{ N}})/ <\mu> .\]
% Thus \[ b_{1}(M-\text{\r{ N}})=  \left\{
                                                            %    \begin{array}{ll}
                                                               %  b_{1}M & \text{if $\mu=0$} \\
                                                               %  b_{1}M+1 & \text{otherwise}
                                                                 %\end{array}
                                               %   \right.
%\]                    

\end{proof}

%this is the same than saying that the lift of S has only one generator (this is the main point)

\begin{theo}\label{generalizedLevinestate}
Let $M$ be a closed 3-manifold with $b_{1}M=n$, $H\cong\Z^{n}$ the deck group
of its universal free abelian cover and let $\lambda = \Sigma \; c_{I}t^{I}\in \Z [H]$ be a symmetric Laurent polynomial with trace not equal to $0$.  Then there exists a closed 3-manifold $M'$ with $ b_{1}M'= b_{1}M$ and having Alexander polynomial \[ \Delta_{M'}=\Delta_{M}\cdot  \lambda .\]

%\[ b_{1}M' = \left\{ \begin{array}{ll}
   %                        b_{1}M & \text{if }{\rm tr}(\lambda )\not=0 \\
      %                     b_{1}M+1 &  \text{if }{\rm tr}(\lambda )=0 .
         %                   \end{array}\right.
 %\]
\end{theo}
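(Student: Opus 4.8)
The plan is to realize $M'$ by Dehn surgery on a carefully chosen knot $J\subset M$ whose lift to the universal free abelian cover carries the polynomial $\lambda$, with Proposition \ref{1generator} serving as the model homology computation. First I would fix a homotopically trivial simple closed curve $S\subset M$ (a small unknotted circle in a coordinate ball), put $N=N(S)$ and $X=M-N$. By Proposition \ref{1generator} there is a $\Lambda$-module isomorphism $H_1(\widehat{X})\cong A_{M}\oplus\Lambda$ whose free summand is generated by the class $\mu$ of a meridian of a fixed lift $N_{0}$ of $N$ (and the same argument downstairs gives $H_1(X)\cong H_1(M)\oplus\Z\langle\mu\rangle$).

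Next I would build an embedded curve $J\subset\operatorname{int}(X)$ out of meridians of $N$. Since $\pi_{1}X\to H$ is surjective, choose loops $g_{I}$ representing the classes $t^{I}$ occurring in $\lambda$, let $\sigma$ be a small meridian of $N$ pushed to the basepoint, and take for $J$ a simple closed curve obtained --- by general position, as every free homotopy class of loops in a $3$-manifold has an embedded representative --- from a word $w$ in the conjugates $g_{I}\sigma^{\pm 1}g_{I}^{-1}$, chosen (see the last paragraph) so that the linking numbers of the various lifts of $J$ in $\widehat{M}$ reproduce the coefficients of $\lambda$. In any case $w$ is a product of conjugates of powers of $\sigma$, so $[J]\mapsto0$ in $H$ and the lift $\widehat{J}_{0}$ of $J$ to $\widehat{X}$ has $[\widehat{J}_{0}]\in\Lambda\mu\subset H_1(\widehat{X})$; hence $\widehat{J}_{0}$, and so every lift $\widehat{J}_{h}=h\widehat{J}_{0}$, is null-homologous in $\widehat{M}$, and $J$ is null-homologous in $M$. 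Because the proof of Proposition \ref{1generator} uses only that the lifts of the curve are null-homologous in $\widehat{M}$ (and that $N(J)$, being a neighborhood of a curve mapping to $0$ in $H$, lifts), it applies verbatim to $J$ and gives $H_1(\widehat{Y})\cong A_{M}\oplus\Lambda$, where $Y=M-N(J)$ and the free summand is generated by a meridian $\mu_{J}$ of a fixed lift of $N(J)$.

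Now I would perform the surgery. As $J$ is null-homologous in $M$ it has a canonical $0$-framed longitude $\ell_{J}$, which bounds a Seifert surface in $Y$; set $M'=Y\cup_{\varphi}(\SI^{1}\times\D^{2})$ with $\varphi$ carrying the meridian of the attached solid torus to $a\mu_{J}+b\ell_{J}$, $\gcd(a,b)=1$. Then $H_1(M')\cong H_1(M)\oplus\Z/a$, so $b_{1}M'=b_{1}M=n$ exactly when $a\neq0$, and then $H(M')=H$ and the universal free abelian cover of $M'$ restricts over $Y$ to $\widehat{Y}$. Running the long exact sequence of the pair $(\widehat{M'},\widehat{Y})$ exactly as in Proposition \ref{1generator} --- excision and Lefschetz duality give $H_2(\widehat{M'},\widehat{Y})\cong\Lambda$ and $H_1(\widehat{M'},\widehat{Y})=0$, and the connecting homomorphism sends the free generator to the class of the lifted surgery curve --- yields
\[
A_{M'}=H_1(\widehat{M'})\;\cong\;H_1(\widehat{Y})/\big(\Lambda\cdot(a\mu_{J}+b\ell_{J})\big)\;\cong\;A_{M}\oplus\Lambda/(a+br),
\]
where $r\in\Lambda$ is defined by $[\ell_{J_{0}}]_{\widehat{Y}}=r\,\mu_{J}$; this class lies in the free summand because $\ell_{J_{0}}$ is freely homotopic to the null-homologous $\widehat{J}_{0}$. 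Thus $\Delta_{M'}\doteq\Delta_{M}\cdot(a+br)$, and it remains to arrange $a+br\doteq\lambda$ with $a\neq0$.

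The main obstacle is exactly this last point. Using the splitting map of Proposition \ref{1generator}, $r=\sum_{h}(\ell_{J_{0}}\cdot\Sigma_{h})\,h$, where $\Sigma_{h}$ is a surface in $\widehat{M}$ bounded by $\widehat{J}_{h}$; so $r$ is, up to a framing correction at $h=0$, the generating function of the linking numbers $\operatorname{lk}_{\widehat{M}}(\widehat{J}_{0},\widehat{J}_{h})$, and in particular it is automatically symmetric. The construction of $J$ in the second paragraph is to be tuned so that this linking pattern equals $\lambda$ up to an additive integer $d$ at $h=0$, i.e.\ $r=\lambda-d$: symmetry of $\lambda$ is what makes $\lambda-d$ a realizable symmetric linking pattern, and the hypothesis $\lambda(1,\dots,1)\neq0$ is what lets $d$ be chosen nonzero --- equivalently, it makes the surgery coefficient $a=d$, which is the one producing $a+br=\lambda$, an admissible ($b_{1}$-preserving) choice. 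Taking $b=1$ and $a=d\neq0$ then gives $\Delta_{M'}\doteq\lambda\cdot\Delta_{M}$. Carrying out this tuning of $J$, and verifying that its lift-linking pattern in $\widehat{M}$ is genuinely $\lambda$ rather than some quadratic expression in $\lambda$ that a naive choice of $w$ would produce, is the technical heart of the argument.
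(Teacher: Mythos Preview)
Your outline is correct and follows essentially the paper's route: do Dehn surgery on a null-homotopic knot in $M$ whose $H$-translates in $\widehat{M}$ link each other according to $\lambda$, invoke Proposition \ref{1generator} to see that the Alexander module of the complement is $A_{M}\oplus\Lambda$, and then observe that the surgery imposes the single relation $\lambda\mu=0$. Your surgery parameter $a=d$ is exactly the paper's $m=\mathrm{tr}(\lambda)$, and your observation that $r(1,\dots ,1)=0$ for the preferred longitude, so that $d=\mathrm{tr}(\lambda)$, is the reason the nonzero-trace hypothesis enters.

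Where the paper adds content is precisely the step you flag as ``the technical heart'': it does not leave the tuning of the knot as a promise but carries it out explicitly by a band construction. Starting from an unknot $S_{0}$ bounding a disc $D$, for each pair $\{t^{I},t^{-I}\}$ with $c_{I}\neq 0$ one attaches a band to $S_{0}$ running along an embedded loop $u_{I}$ representing $t^{I}$, and at the returning end of the band one introduces $c_{I}$ clasps with $S_{0}$. The resulting curve $S$ visibly bounds an immersed disc (so is null-homotopic), and a direct inspection of the lifted picture shows that the obvious longitude satisfies $\hat{l}=\lambda_{0}\mu$ with $\lambda_{0}=\lambda-c_{0}$, whence the preferred longitude gives $\hat{\ell}=(\lambda_{0}-\mathrm{tr}(\lambda_{0}))\mu$ and the $m$-surgery relation becomes $\lambda\mu=0$. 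This concrete picture is exactly what replaces your unspecified ``tuning of $J$'' and sidesteps the worry you raise about quadratic artefacts.

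Two small remarks on your write-up. First, the auxiliary unknot $S$ and the detour through $X=M-N(S)$ are unnecessary: since $\sigma$ bounds a disc in $M$, any word in conjugates of $\sigma$ is already null-\emph{homotopic} in $M$, not merely null-homologous, so Proposition \ref{1generator} applies to $J$ as stated; in fact your $J$ plays the role of the paper's $S$ and your $S$ that of the paper's $S_{0}$. Second, you should note explicitly that with $a=\mathrm{tr}(\lambda)\neq 0$ one has $H_{1}(M')\cong H_{1}(M)\oplus\Z/a\Z$, so $b_{1}M'=b_{1}M$ and the universal free abelian cover of $M'$ restricts to $\widehat{Y}$ as you use; the paper takes this for granted but it is the only place the trace hypothesis is needed.
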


\begin{proof}  Let    $\lambda = \Sigma \; c_{I}t^{I}$ be a symmetric Laurent polynomial.
We start by constructing a simple closed curve $S\subset M$ associated to the polynomial $\lambda$ with which we will modify $M$.  %which is homotopically trivial in $M$ ????
%ie it wont go aroun the "windows" WE CAN ALSO SEE THIS AS THE DEFINITION OF THE GENERAL S

%give it an orientation

%we pick a loop $\alpha_{i_{1},...,i_{n}}$ based at a point on $\partial D$ such that \[ [\alpha_{i_{1},...,i_{n}}]=t_{1}^{i_{1}}...t_{n}^{i_{n}} \in H_{1}(M)/Tor(H_{1}(M)). \]
%See Figure?.  
% PICTURE DISC WITH LOOPS

%We will now modify $\partial D$ in the following way. For each loop $\alpha_{i_{1},...,i_{n}}$ based at $\partial D$ we remove a 3-ball neighborhood of the base point and insert a new one as shown in Figure?. This modified $\partial D$ will be our curve $S$.

%PICTURE REMOVING AND PUTTING BACK BALL

% For some $\epsilon>0$ attach a band $[0,1-\epsilon] \times I$ THE ORIENTATION to $D$ to form a new disc $D_{1}$ isotopic to $D$. In a neighborhood of $(1-\epsilon) \times I$ enlarge the disc, as shown in Figure ?. The number of crossings and their signs depends on $a_{i,j}$ as shown in Figure ?. The result is an immersed disc. Let $C$ be its boundary.

%it is a manifold, locally it is like S3!!!!

Let $B\subset M$ be a 3-ball and choose a simple closed curve $S_{0}\subset B$ bounding an embedded disk $D$ in $M$. We will modify $S_{0}$ to obtain a new simple closed curve $S$ that will bound an immersed disk in $M$.
For each term pair $\{ t^{I}, t^{-I}\}$, $I\not=0$, of the polynomial $\lambda$ having non-zero coefficient $c_{I}$, pick
\begin{enumerate}
\item A point $q_{I}$ on $S_{0}$ such that $q_{I}\not=q_{I'}$ if $\{ t^{I}, t^{-I}\}\not= \{ t^{I'}, t^{-I'}\}$.
\item One of the two homology classes associated to $\pm I\in \Z^{n}\cong
H_{1}(M)/{\rm Tor}(H_{1}(M))$, which we denote  $\gamma_{\pm I}$.
\end{enumerate}
Then pick an embedded loop $u_{I}$ in $M$ based at $q_{I}$ such that \[ [u_{I}]=\gamma_{\pm I}. \] We may assume, after an isotopy, that the loops $u_{I}$ are disjoint.  Consider
a segment $\tilde{u}_{I}$ obtained from $u_{I}$ by cutting off a small piece of one of its ends, then thicken this segment to a band $\tilde{u}_{I}\times [0,1]\approx [0,1]\times[0,1]$.
See Figure \ref{Band}.  We assume that we have done this in such a way that
\begin{itemize}
\item

$(\tilde{u}_{I}\times [0,1])\cap D= (\tilde{u}_{I}\times [0,1])\cap S_{0} =\{0\} \times [0,1]$ and that the two end segments $\{0,1\} \times [0,1]$
lie in a small neighborhood of $q_{I}$.
\item Each component $[0,1] \times \{0,1\}$ is a copy of $u_{I}$ in the sense that the union of it with a small arc in $S_{0}$ is isotopic to $u_{I}$. We may again assume that all such bands are disjoint.
%\item Need to be careful about orientation of curves and endpoints of bands.
\end{itemize}
Finally, modify the end of the band as in Figure \ref{Band} so that its boundary links according to the coefficient $c_{I}$.  
We call the new curve $S$.  Note that $S$ bounds an immersed disc in $M$ and is therefore homotopically 
trivial.  

\begin{figure}[htbp]
\centering
\includegraphics[width=5in]{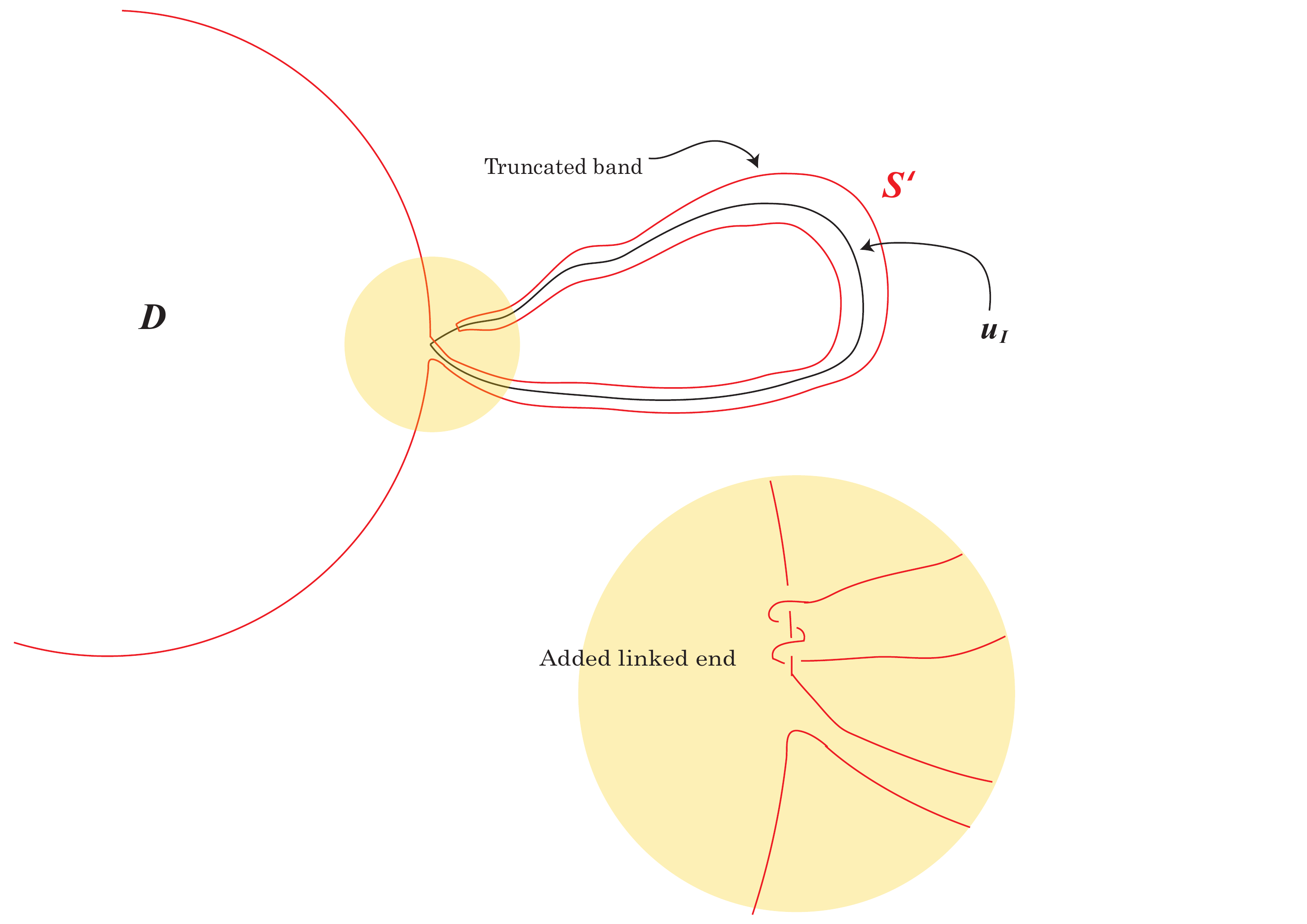}
\caption{Modifying $S_{0}$ to obtain $S$.}
\label{Band}
\end{figure}

 %This will means that $S$ is homotopically trivial in $M$. Falta esta demostracion.

Remove a tubular neighborhood $N$  of $S$ and let $X=M-N$. We construct $M'$ by performing an $m$-surgery on $M$ along $S$,
where $m={\rm tr}(\lambda )$, and with respect to
a {\it preferred framing} \[ f=(\ell,\mu)\subset\partial N.\]  Here, $\ell$ is the {\it preferred longitude} -- the one
characterized by defining a homologically trivial element in $X$ -- and $\mu$ is a meridian.
We will show that $\Delta_{M'}=\Delta_{M}\cdot \lambda$.

It will be important for us to have an explicit understanding of the preferred longitude $\ell$.  First, consider a tubular neighborhood of the curve as it appears before the linking step; call that curve $S'$.
See the first image in Figure \ref{Band}.  Choose a preferred longitude  for the original curve $S_{0}$, cut at the beginning
of the band.  Continue this longitude along a pair of homotopic segments lying along tubular neighborhoods
of the band boundaries. The result will be a curve $\ell'$ which is homologically trivial in the complement of a tubular neighborhood of $S'$.

A tubular neighborhood for $S$ is obtained by cutting the tubular neighborhood of $S'$ just constructed and adding a cylinder that links about $S_{0}$.
If we were to continue $\ell'$ without twisting about this cylindrical piece, we obtain a longitude called the {\bf {\em obvious longitude}},
denoted $l$,
which is not homologically trivial.  In fact, $l$ bounds an immersed punctured disk, punctured twice for each linking that has been introduced.  More specifically,
$l$ is homologous in $X$ to
\[    (\sum_{I\not=0} c_{I})\mu   \]
where $\mu$ is a meridian of the tubular neighborhood of $S$.
The preferred longitude of $S$, denoted $\ell$, is therefore obtained from $l$ by introducing a
 {\it pair} of twists in $l$ (opposite in orientation to the direction of the linkings) for each of the  $c_{I}$ linkings coming from $u_{I}$.
 That is,
 \[\ell = l -(\sum_{I\not=0} c_{I})\mu. \]
%We assume moreover that the modification of $S_{0}$ obtained by adding the curve $u_{I}$ was performed in such a way
%that no additional twists are introduced into $f$.  
See Figure \ref{preferredframeS}.

\begin{figure}[htbp]
\centering
\includegraphics[width=5in]{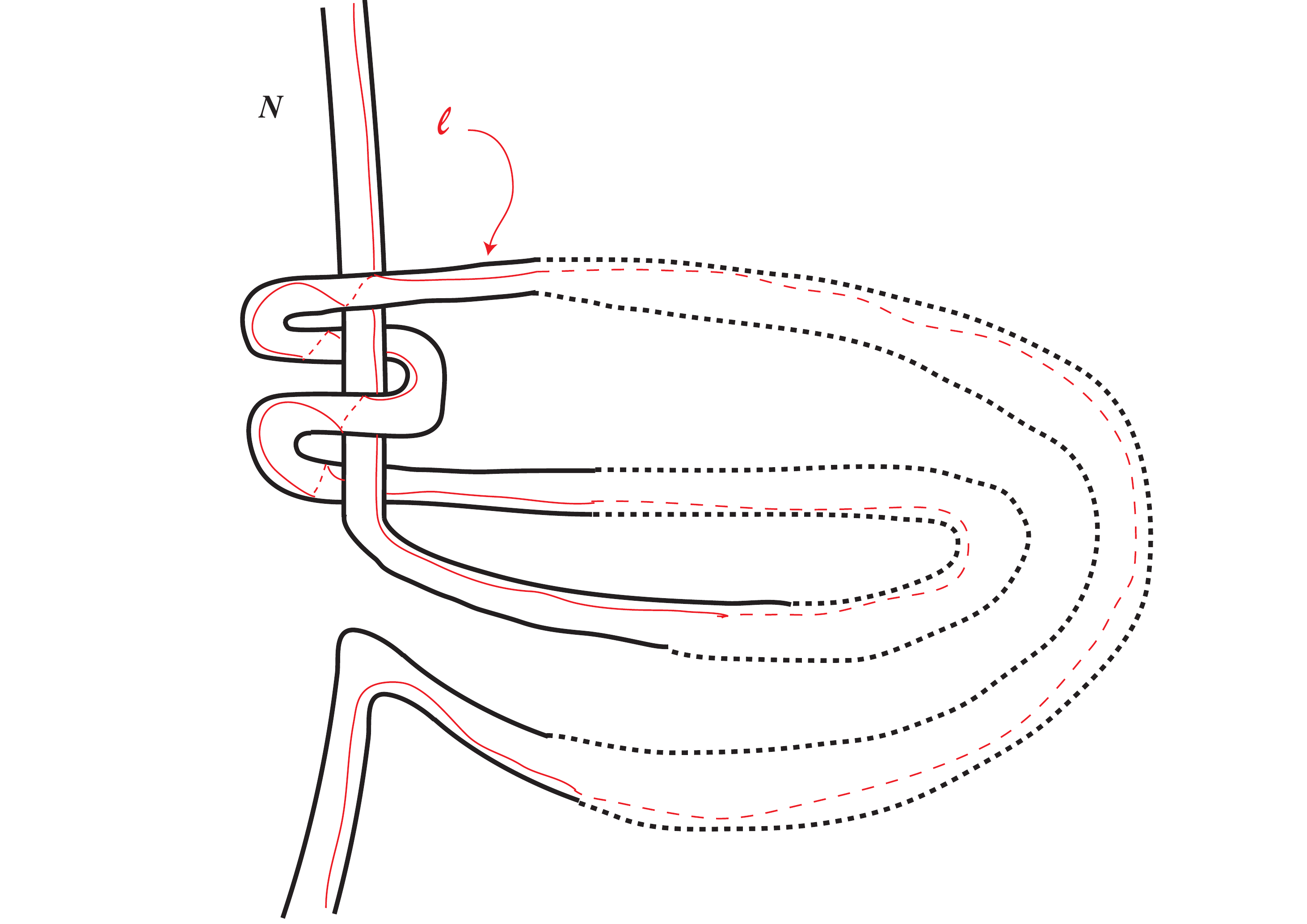}
\caption{The preferred longitude.}
\label{preferredframeS}
\end{figure}

%we won't care for now what it means to do an integer surgery on M, the point is that by doing surgery I'm indeed changing the manifold.
We will now calculate the Alexander module $H_{1}(\widehat{M}')$ as a $\Lambda$-module. Let $\widehat{M}$ be the $\Z^{n}$ cover of $M$, $p$ the covering map so that $\widehat{S}=p^{-1}(S)$ and $\widehat{N}=p^{-1}(N)$.
%Each component of $\tilde {S}$ bounds a copy of $D$.
%but in the cover the boundary will be homotopically equivalent to the sum of the punctures.
%each of this bounds a copy of D but it is punctured by the others.
Let $\widehat{X}=\widehat{M}-\widehat{N}$. By Proposition \ref{1generator} we have that
\[ H_{1}(\widehat{X})\cong H_{1}(\widehat{M}) \oplus \Lambda\]
as $\Lambda$-modules.
Then the only new generator in homology which one obtains by removing $\widehat{N}$ corresponds algebraically to the factor $ \Lambda$.  In particular, this factor as a $ \Lambda$-module is cyclic, generated by an element $\alpha$.

In order to obtain $\widehat{M}'$ from $\widehat{M}$, we will glue in solid tori to $\widehat{X}$, that is, to perform an $m$-surgery on each torus component.  We will
choose a longitude in each torus boundary $\widehat{N}$ of $\widehat{X}$ coming from the lift of the preferred longitude $\ell$
of $N$.  For a fixed boundary component of $\widehat{N}$, we denote this longitude by $\hat{\ell}$ and denote by $\mu$
a meridian.  We let $\hat{l}$ be a lift to  $\widehat{N}$ of the obvious longitude $l$ to the same boundary component containing $\hat{\ell}$.
 Observe that the homology class of $\mu$ also generates the factor $  \Lambda$ occurring
in $H_{1}(\widehat{X})$, so we may assume that $\mu=\alpha$.

The longitude $\hat{l}$ is not homologically trivial (as $l$ was not) owing to the fact that other lifts of $N$ link with $\widehat{N}$.  Instead, it bounds
an immersed punctured disk, in which we have a puncture for each such linking.  See Figure \ref{punctures}.

\begin{figure}[htbp]
\centering
\includegraphics[width=5in]{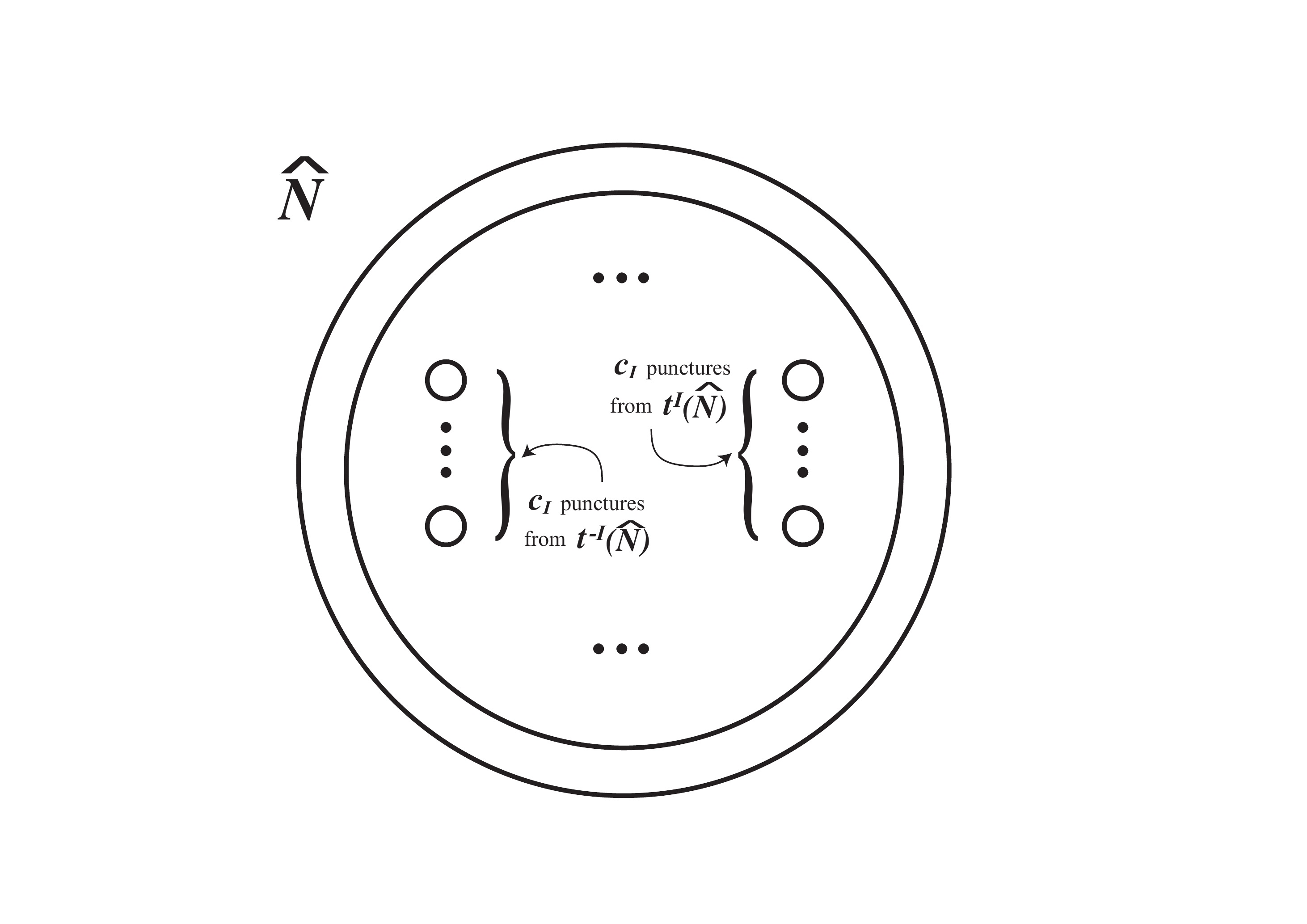}
\caption{The immersed punctured disk.}
\label{punctures}
\end{figure}

Let $\lambda_{0}=\lambda - c_{0}$ so that ${\rm tr}(\lambda_{0})= \sum_{I\not=0} c_{I}$.  Note then
that $\hat{l}=\lambda_{0}\cdot\mu$ i.e.\ is the sum of the punctures.  Then it follows by our construction of $\ell$ from $l$ 
that
\[   \hat{\ell}=(\lambda_{0}-{\rm tr}(\lambda_{0}))\mu = \hat{l}-{\rm tr}(\lambda_{0})\mu .  \]
For the preferred framing $f=(\ell,\mu)\subset \partial X$ an $m$-surgery is given by the formula $\ell+m\mu$.  Thus, in the manifold $M'$, $\ell+m\mu$ is trivial.  This relation produces the relation in $\widehat{M}'$
\[  \hat{\ell}+m\mu = (\lambda_{0}-{\rm tr}(\lambda_{0}) +m)\mu =0.  \]
Thus since $m={\rm tr}(\lambda )$ we have $m-{\rm tr}(\lambda_{0})=c_{0}$ then this relation becomes
\[   \lambda\mu =0.\]

Since this new relation only involves the new generator $\alpha =\mu$, then the presentation of the Alexander
module of $M'$ is of the form
\[ \langle x_{1},\dots ,x_{\alpha}, \mu |r_{1},\dots ,r_{b}, \lambda\mu \rangle   \]
where
$\langle x_{1},\dots ,x_{\alpha}|r_{1},\dots ,r_{b}\rangle$ is the presentation for the Alexander module of $M$.  It then follows
immediately that the presentation matrix for the Alexander module of $M'$ is of the form
\[P' = \left(
                        \begin{array}{cc}
                        P & {\bf 0} \\
                        {\bf 0} & \lambda
                        \end{array}
\right)\]
where ${\bf 0}$ are zero vectors.  It follows that 
$\Delta_{M'} = \Delta_{M}\cdot \lambda$.

\end{proof}

\vspace{5mm}

For low Betti numbers $b_{1}=1, 2$ or $3$, the following closed orientable 
manifolds have 
$\Delta_{M}=1$ (see \cite{Mc}):

\begin{itemize}
\item[-] $\SI^{1}\times\SI^{2}$, $b_{1}=1$.
\item[-] $\text{\sf H}_{3}(\R)/\text{\sf H}_{3}(\Z )$ = Heisenberg manifold, $b_{1}=2$.
\item[-]  $\T^{3}$ = the 3-torus, $b_{1}=3$.
\end{itemize} 

%The first and third examples are well-known, for a proof %\S \ref{0DehnWhitehead}  
%of $\Delta_{M_{W}}=1$ see?.
Combining these examples with Theorem \ref{generalizedLevinestate} we have

\begin{coro}\label{sufficiency}  Every symmetric Laurent polynomial in $1$,$2$ or $3$ variables having non-zero trace is the Alexander polynomial of a $3$-manifold with first Betti number $1$,$2$ or $3$.
\end{coro}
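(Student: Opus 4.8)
The plan is to obtain the Corollary as an immediate consequence of Theorem~\ref{generalizedLevinestate}, using the three manifolds listed just above as seeds. Fix $n\in\{1,2,3\}$ and let $\lambda$ be a symmetric Laurent polynomial in $n$ variables with ${\rm tr}(\lambda)\ne 0$. I would take the seed manifold $M$ to be $\SI^{1}\times\SI^{2}$ when $n=1$, the Heisenberg manifold $\text{\sf H}_{3}(\R)/\text{\sf H}_{3}(\Z)$ when $n=2$, and the $3$-torus $\T^{3}$ when $n=3$. Each of these is a closed orientable $3$-manifold with $b_{1}M=n$ and, as recorded above (see \cite{Mc}), with $\Delta_{M}=1$; for $\SI^1\times\SI^2$ and $\T^3$ one can also see this directly, since the universal free abelian cover is $\R\times\SI^2$, respectively $\R^3$, which has vanishing first homology.

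With this $M$ and this $\lambda$ the hypotheses of Theorem~\ref{generalizedLevinestate} are satisfied: $M$ is closed with $b_{1}M=n$, the deck group of its universal free abelian cover is $\Z^n$, and $\lambda$ is a symmetric Laurent polynomial in $n$ variables with non-zero trace. The theorem then produces a closed $3$-manifold $M'$ with $b_{1}M'=b_{1}M=n$ and $\Delta_{M'}=\Delta_{M}\cdot\lambda$. Since $\Delta_{M}=1$, this reads $\Delta_{M'}=\lambda$, an equality of Laurent polynomials up to multiplication by a unit of the group ring, which is the only sense in which an Alexander polynomial is defined. Hence $\lambda$ is realized as the Alexander polynomial of a closed $3$-manifold with first Betti number $n$, proving the Corollary.

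I do not expect any real obstacle here: all of the geometric and algebraic work is already packaged inside Theorem~\ref{generalizedLevinestate}, and the Corollary is just the observation that one may bootstrap from the constant polynomial $1$. The only point deserving comment is why the statement is restricted to $n\le 3$: this is exactly because these are the Betti numbers for which a seed manifold with trivial Alexander polynomial is available, and the Main Theorem later in the paper shows that no such seed exists once $b_{1}\ge 4$, so the same bootstrapping cannot be carried out there.
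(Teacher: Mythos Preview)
Your argument is correct and is exactly the one the paper intends: the Corollary is stated immediately after the list of seed manifolds with $\Delta_{M}=1$ and Theorem~\ref{generalizedLevinestate}, and the proof is just the application of that theorem to those seeds. Your added remarks about the explicit covers and the obstruction for $b_{1}\ge 4$ are accurate but not needed for the Corollary itself.
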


\section{Characterization for $b_{1}M=1$}

In this section we prove the  

\begin{coron}  Let $\lambda$ be a Laurent polynomial in $1$ variable.  Then $\lambda =\Delta_{M}$
for some closed $3$-manifold with $b_{1}M=1$ $\Leftrightarrow$ $\lambda$ is unit symmetric and has non-zero trace.
\end{coron}

The sufficiency of the condition ``$\lambda$ is unit symmetric and has non-zero trace'' follows from Corollary \ref{sufficiency}. Therefore we will dedicate this section to proving necessity, which will
be accomplished as follows:
  
\begin{enumerate}
\item[\S \ref{blanchfield}] We recall Blanchfield's symmetry result, which says that the Alexander polynomial of any closed 3-manifold is mod unit symmetric. 
\item [\S\ref{SeifSurf}] We show the existence of a ``Seifert surface'' $\Sigma$  for $M$ and use it to construct the universal infinite cyclic cover $\widehat{M}$.
\item[\S \ref{seifertmat}]  Using $\Sigma$ we define a ``Seifert matrix''  for $M$, and use it to show that  $\Delta_{M}$ is unit symmetric. 
\item[\S \ref{cha}] We prove in Lemma \ref{nonzero} that the trace of $\Delta_{M}$ is not $0$.
\end{enumerate}

\subsection{Blanchfield's Mod Unit Symmetry Theorem}\label{blanchfield}

 Let $M$ be a compact and orientable $3$-manifold with or without boundary. 
 %Check with Marc to see if we need compactness, finitely presented fundamental group, etc.
In \cite{Blan}, a general symmetry result about Alexander ideals was proved.
 As before, we denote by $\widehat{M}$ the universal free abelian cover and denote by $\Lambda =\Z [H]$ 
the group ring generated by the deck group $H$ of $\widehat{M}\rightarrow M$.  
Recall the automorphism $\iota: \Lambda\rightarrow \Lambda$ defined in \S \ref{generalizedLevinestate}.
We will say that an ideal $\mathfrak{a}\subset\Lambda$ is {\bf {\em symmetric}} if $\iota (\mathfrak{a})=\mathfrak{a}$.

%Ther is an exact sequence for the homology relative to the boundary
%we may use it to conclude the result for the case without the boundary.

%He defines the torsion module $T_{1}(M')$

%\begin{theo} The i- ideals (I guess Alexander Ideals) of $T_{1}(M')$ are symmetric for i=0,1,2,...
%\end{the

The following theorem is a special case of Corollary 5.6 of \cite{Blan}.

\begin{theo} The principal ideal $(\Delta_{M})$ is symmetric. 
\end{theo}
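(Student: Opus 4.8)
\emph{Proof strategy.} Write $\doteq$ for equality in $\Lambda$ up to a unit; since $(\Delta_M)$ is principal, the assertion $\iota\big((\Delta_M)\big)=(\Delta_M)$ is equivalent to $\iota(\Delta_M)\doteq\Delta_M$, i.e.\ to $\Delta_M$ being mod unit symmetric. If the Alexander module $A_M=H_1(\widehat M)$ is not $\Lambda$-torsion then its $0$th elementary ideal is $0$, so $\Delta_M=0$ and there is nothing to prove; so assume $A_M$ is torsion, whence $\Delta_M\doteq\Delta_0(A_M)$. The plan has three steps: (a) use Poincar\'e(--Lefschetz) duality on $\widehat M$ at the chain level; (b) feed it into the universal coefficient spectral sequence to obtain $\Delta_0(A_M)\doteq\iota\big(\Delta_0(\operatorname{Ext}^1_\Lambda(A_M,\Lambda))\big)$; (c) conclude via the ``adjoint identity'' $\Delta_0\big(\operatorname{Ext}^1_\Lambda(T,\Lambda)\big)\doteq\Delta_0(T)$ for torsion $T$.

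For (a): fix a finite CW structure on $M$ and lift it to $\widehat M$, so that $C_*(\widehat M)$ is a bounded complex of finitely generated free $\Lambda$-modules. For $M$ closed, capping with a lift of the fundamental class gives a chain homotopy equivalence $C_*(\widehat M)\simeq\overline{\operatorname{Hom}_\Lambda\!\big(C_{3-*}(\widehat M),\Lambda\big)}$ of $\Lambda$-complexes, where $\overline{(\cdot)}$ denotes the $\iota$-conjugated module structure; in particular $H_1(\widehat M)\cong\overline{H^2(\widehat M;\Lambda)}$ as $\Lambda$-modules, so $\Delta_0(A_M)=\iota\big(\Delta_0(H^2(\widehat M))\big)$. (When $\partial M\neq\emptyset$ one replaces this by Lefschetz duality $C_*(\widehat M)\simeq\overline{\operatorname{Hom}_\Lambda(C_{3-*}(\widehat M,\widehat{\partial M}),\Lambda)}$ and chases the long exact sequence of the pair; only the closed case is used in this paper.) For (b): in the universal coefficient spectral sequence $E_2^{p,q}=\operatorname{Ext}^p_\Lambda(H_q(\widehat M),\Lambda)\Rightarrow H^{p+q}(\widehat M;\Lambda)$ the associated graded of $H^2$ has one piece a subquotient of $\operatorname{Hom}_\Lambda(H_2(\widehat M),\Lambda)$, one a subquotient of $\operatorname{Ext}^1_\Lambda(H_1(\widehat M),\Lambda)$, and the remaining pieces subquotients of $\operatorname{Ext}^{\ge 2}_\Lambda(H_0(\widehat M),\Lambda)=\operatorname{Ext}^{\ge 2}_\Lambda(\Z,\Lambda)$. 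The first is torsion free, hence zero because $H^2\cong\overline{A_M}$ is torsion; the last are ``pseudo-null'' (the $\Lambda$-module $\Z$ is supported in codimension $n$, so these Ext-groups, when nonzero, are supported in codimension $\ge 2$) and therefore contribute a unit to $\Delta_0$. Hence $\Delta_0(H^2(\widehat M))\doteq\Delta_0\big(\operatorname{Ext}^1_\Lambda(A_M,\Lambda)\big)$, giving the displayed identity of step (b). Equivalently one may package duality as the Blanchfield linking pairing $A_M\times A_M\to Q(\Lambda)/\Lambda$, nonsingular and $\iota$-conjugate-symmetric, together with the identification $\operatorname{Hom}_\Lambda(A_M,Q(\Lambda)/\Lambda)\cong\operatorname{Ext}^1_\Lambda(A_M,\Lambda)$ valid for torsion $A_M$.

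For (c): over the regular unique factorization domain $\Lambda=\Z[\Z^n]$, for every finitely generated torsion module $T$ one has $\Delta_0\big(\operatorname{Ext}^1_\Lambda(T,\Lambda)\big)\doteq\Delta_0(T)$, because both sides localize, at each height-one prime, to the order over a discrete valuation ring, where $T$ and $\operatorname{Ext}^1_\Lambda(T,\Lambda)$ have the same elementary divisors. Combining this with step (b),
\[
\Delta_M\doteq\Delta_0(A_M)\doteq\iota\Big(\Delta_0\big(\operatorname{Ext}^1_\Lambda(A_M,\Lambda)\big)\Big)\doteq\iota\big(\Delta_0(A_M)\big)\doteq\iota(\Delta_M),
\]
which is precisely mod unit symmetry, so $(\Delta_M)$ is symmetric. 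The main obstacle, and the reason the classical knot-theoretic trick of transposing a Seifert matrix does not apply verbatim, is the homological algebra over $\Lambda=\Z[\Z^n]$ for $n>1$: $\Lambda$ is not a PID, $A_M$ need not admit a square presentation or have projective dimension $\le 1$, and one must genuinely control the $\operatorname{Hom}$ and $\operatorname{Ext}^{\ge 2}$ contributions in (b) and justify the adjoint identity in (c). This is the content of Blanchfield's Corollary 5.6 \cite{Blan}, of which the stated theorem is a special case.
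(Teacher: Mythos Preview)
The paper does not actually prove this theorem: it is stated as a special case of Corollary~5.6 of Blanchfield \cite{Blan} and simply cited, with no argument given. So there is no ``paper's own proof'' to compare your proposal against.

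That said, your sketch is a correct modern rendering of Blanchfield's duality argument, and indeed you acknowledge as much in your last line. A few remarks on the details: in step~(b) the claim that the $\operatorname{Ext}^{\ge 2}_\Lambda(\Z,\Lambda)$ pieces are pseudo-null is right but deserves a word of justification --- the Koszul resolution of $\Z$ over $\Lambda=\Z[\Z^n]$ shows $\operatorname{Ext}^i_\Lambda(\Z,\Lambda)$ vanishes for $i\neq n$ and equals $\Z$ for $i=n$, which for $n\ge 2$ is supported on the augmentation ideal of height $n\ge 2$, hence pseudo-null; for $n=1$ the $\operatorname{Ext}^{\ge 2}$ terms vanish outright. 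You also implicitly use multiplicativity of $\Delta_0$ (up to units) in short exact sequences of torsion modules, which is standard but worth stating. The alternative packaging you mention via the Blanchfield pairing $A_M\times A_M\to Q(\Lambda)/\Lambda$ is closer in spirit to Blanchfield's original formulation.

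In short: your proposal supplies what the paper omits, and does so correctly.
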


We have as an immediate corollary:

\begin{coro}\label{modunitsymmetric} Any Alexander polynomial $\Delta_{M}$ of $M$ is mod unit symmetric.
\end{coro}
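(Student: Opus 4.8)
The statement to prove is \textbf{Corollary \ref{modunitsymmetric}}: any Alexander polynomial $\Delta_M$ of $M$ is mod unit symmetric. This follows immediately from the preceding Theorem (the special case of Blanchfield's Corollary 5.6), which asserts that the principal ideal $(\Delta_M)\subset\Lambda$ is symmetric, i.e.\ $\iota((\Delta_M))=(\Delta_M)$.

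The plan is as follows. First I would unwind what symmetry of the principal ideal means: $\iota$ is a ring automorphism of $\Lambda$, so $\iota((\Delta_M))$ is the principal ideal generated by $\iota(\Delta_M)$. Hence $(\iota(\Delta_M))=(\Delta_M)$ as ideals of $\Lambda$. Next I would invoke the standard fact that in a commutative ring two principal ideals $(a)$ and $(b)$ coincide if and only if $a$ and $b$ are associates, i.e.\ $a=ub$ for some unit $u\in\Lambda^{\times}$; here one direction is trivial and the other uses that if $(a)=(b)$ then $a=xb$ and $b=ya$ for some $x,y$, so $a=xya$, and — when $\Lambda$ is a domain (indeed $\Lambda=\Z[H]$ with $H$ free abelian is an integral domain, even a UFD as noted earlier) and $a\neq 0$ — cancellation gives $xy=1$, so $x$ is a unit. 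Applying this with $a=\iota(\Delta_M)$ and $b=\Delta_M$ yields $\iota(\Delta_M)=u\,\Delta_M$ for some unit $u$, which is exactly the definition of \emph{mod unit symmetric} given in \S\ref{LevineSection}.

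I should address the degenerate case $\Delta_M=0$ separately: then $\iota(\Delta_M)=0=1\cdot\Delta_M$, so the conclusion holds trivially (with $u=1$). So the argument splits into the trivial zero case and the main case where cancellation in the domain $\Lambda$ applies.

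The only genuine content here is the ring-theoretic lemma that $(a)=(b)$ implies $a,b$ are associates in an integral domain; everything else is bookkeeping. There is no real obstacle — this corollary is purely formal given the cited theorem — so the write-up is short: state the ideal equality $(\iota\Delta_M)=(\Delta_M)$, quote the associates lemma for the domain $\Lambda$, and read off $\iota(\Delta_M)=u\Delta_M$.
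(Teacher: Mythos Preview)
Your proof is correct and follows exactly the paper's own argument: from $(\iota(\Delta_{M}))=(\Delta_{M})$ one concludes $\iota(\Delta_{M})=u\Delta_{M}$ for some unit $u\in\Lambda^{\times}$. You have merely spelled out the associates-in-a-domain step and the trivial $\Delta_{M}=0$ case that the paper leaves implicit.
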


\begin{proof}  By symmetry $(\iota (\Delta_{M})) = (\Delta_{M})$ so there exists $u\in\Lambda^{\times}$ such that $\iota (\Delta)= u\Delta$ i.e.\ $\Delta$ is mod unit symmetric  \end{proof}

%For a proof he uses the previous theorem and Lemma 4.10

%I have to give another Corollary saying that if the ideals are symmetric then the Alexander polynomial will be as well.
%The ALexander Polynomial is the greatest common divisor of the generators of the ideal.

\subsection{An Analogue of the Seifert Surface Construction}\label{SeifSurf}

%\begin{color}{blue} Let $M$ be a closed oriented 3-manifold with $b_{1}M=1$.  For $G$ an abelian group which is finitely generated,  ${\rm Tor} (G)$
%is the torsion subgroup and ${\rm Free}(G) =G/{\rm Tor}(G)$.  We will identify ${\rm Free}(G)$ as a subgroup of $G$ so that $G={\rm Free}(G)\oplus {\rm Tor} (G)$. \end{color}

Let $M$ be closed and orientable with $b_{1}M=1$.  Recall  the map $\psi :\pi_{1}M\rightarrow \Z$ defined as the composition
\begin{diagram}\label{defofpsi}  \pi_{1}M & \rTo^{\text{abelianization}}  &H_{1}(M,\Z ) &\rTo^{\text{projection}} & {\rm Free}(H_{1}(M,\Z ))\cong \Z ,\end{diagram}
where for $A$ an abelian group, ${\rm Free}(A)=A/{\rm Tor}(A)$.  We will usually identify $ {\rm Free}(A)$ as a subgroup of $A$
by choosing a section of the projection $A\rightarrow {\rm Free}(A)$.
For an embedded closed oriented surface $\Sigma\subset M$ and a simple closed oriented curve $\gamma\subset M$ we
denote by $\gamma\cdot \Sigma\in \Z$ the signed intersection number.

\begin{theo}  There exists an oriented, embedded and non separating closed surface $\Sigma\subset M$
such that for all $\gamma\in\pi_{1}M$,
\[  \psi (\gamma ) =   \gamma \cdot \Sigma  .    \]
\end{theo}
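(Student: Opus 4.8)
The plan is to realize $\Sigma$ as the preimage of a regular value under a map $f\colon M\to \SI^1$ representing the cohomology class dual to $\psi$. First I would observe that $\psi\in \mathrm{Hom}(\pi_1 M,\Z)=H^1(M;\Z)$, and since $\SI^1=K(\Z,1)$, there is a map $f\colon M\to\SI^1$, unique up to homotopy, inducing $\psi$ on $\pi_1$; concretely $f_*\colon H_1(M;\Z)\to H_1(\SI^1;\Z)=\Z$ agrees with $\psi$ after abelianization. After a small homotopy we may take $f$ smooth, and by Sard's theorem pick a regular value $z\in\SI^1$. Set $\Sigma=f^{-1}(z)$, a smooth closed surface, which is orientable because it is cut out transversally in an orientable manifold with trivial (hence orientable) normal bundle — the normal bundle is the pullback of $T_z\SI^1$.

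Next I would identify the intersection number $\gamma\cdot\Sigma$ with $\psi(\gamma)$. For a loop $\gamma\colon \SI^1\to M$ made transverse to $\Sigma$, the composite $f\circ\gamma\colon\SI^1\to\SI^1$ has degree equal to $\psi([\gamma])=f_*[\gamma]$; on the other hand the signed count of points of $\gamma\cap\Sigma=(f\circ\gamma)^{-1}(z)$, with signs given by the local degree of $f\circ\gamma$, is exactly this degree. Since intersection number is a homotopy/homology invariant and depends only on $[\gamma]\in H_1(M;\Z)$, the identity $\gamma\cdot\Sigma=\psi(\gamma)$ for all $\gamma\in\pi_1 M$ follows; note both sides kill torsion, consistent with $\psi$ factoring through ${\rm Free}(H_1(M;\Z))$.

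Finally I would check non-separating: if $\Sigma$ separated $M$ into two pieces, every loop would meet it algebraically zero times, forcing $\psi=0$, contradicting $b_1 M=1$ (so $\psi$ is onto $\Z$). Equivalently, pick $\gamma$ with $\psi(\gamma)=1$; it meets $\Sigma$, and $M\setminus\Sigma$ is connected because one can travel along $\gamma$ through the single sheet. One can also, if desired, pass to a connected non-separating component of $\Sigma$, or tube components together to keep $\Sigma$ connected; I would remark that connectedness of $\Sigma$ is not needed for the construction of $\widehat M$ but is convenient.

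The main obstacle is essentially bookkeeping rather than depth: making precise that the degree of $f\circ\gamma$ simultaneously computes $\psi(\gamma)$ (from the $\pi_1$ side) and the signed intersection count $\gamma\cdot\Sigma$ (from the transversality side), and being careful that all maps can be taken smooth and transverse simultaneously. The orientation conventions — orienting $\Sigma$ via the pullback framing so that intersection signs match local degrees of $f\circ\gamma$ — are the one place where a sign error could creep in, so I would fix the convention at the outset and use it consistently.
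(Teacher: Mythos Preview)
Your proof is correct and complete. It takes a somewhat different route from the paper's argument, and the comparison is worth recording.

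The paper proceeds homologically: it observes that $\psi$ defines an element of $\mathrm{Hom}(H_{1}(M;\Z),\Z)$, invokes the intersection form of Poincar\'e duality to find a dual class $[\Sigma]\in H_{2}(M;\Z)$, and then asserts that a codimension-one class admits a connected embedded surface representative. Orientability is argued after the fact from nontriviality of $[\Sigma]$, and the equality $\psi(\gamma)=\gamma\cdot\Sigma$ is read off from the duality pairing.

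Your argument instead realizes $\psi$ by a map $f\colon M\to \SI^{1}=K(\Z,1)$ and takes $\Sigma=f^{-1}(z)$ for a regular value; orientability comes for free from the trivial normal bundle, and the identity $\psi(\gamma)=\gamma\cdot\Sigma$ is the statement that the degree of $f\circ\gamma$ equals the signed count of preimages of $z$. This is more constructive: it actually \emph{proves} the step the paper blackboxes (that a codimension-one integral class is represented by an embedded oriented surface), since that fact is typically established by exactly this Pontryagin--Thom/classifying-map argument. The price you pay is having to manage smoothness, transversality, and orientation conventions explicitly, which you flag. The non-separating arguments are essentially the same in both. Your remark about passing to a connected component or tubing is a reasonable way to match the paper's claim of connectedness, which your raw preimage need not satisfy.
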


\begin{proof}  
Note that $\psi$ is by definition trivial on $[\pi_{1}M,\pi_{1}M]$ so that it induces an element of ${\rm Hom}(H_{1}(M,\Z ),\Z )$: the projection appearing in (\ref{defofpsi}), which is a non trivial homomorphism.
%which is not a torsion element and therefore gives
 %a nontrivial element of ${\rm Free}({\rm Hom}(H_{1}M,\Z ))$.   
 The intersection pairing version of Poincar\'{e} duality 
provides in particular a non-degenerate pairing 
\[ {\rm Free}( H_{1}(M,\Z ) ) \times  {\rm Free}( H_{2}(M,\Z ) )\longrightarrow \Z \]  
given by the signed intersection number.  
%By non-degenerate we mean that if $\gamma\in{\rm Free}( H_{1}(M,\Z ) )\cong\Z$
%and $\Sigma \in  {\rm Free}( H_{2}(M,\Z ) )\cong\Z$ then $\gamma\cdot \Sigma\not=0$.

See \cite{GrHa}. N.B. $ H_{2}(M,\Z)\cong H_{1}(M,\Z )$ which is free by the universal coefficient theorem. This means that every homomorphism 
\[ {\rm Free}(H_{1}(M,\Z ))\longrightarrow\Z\] 
% which is not torsion
 is given by the intersection pairing
with some element of $H_{2}(M,\Z )$.  Thus
we may associate
to $\psi$ an element $[\Sigma ]\in H_{2}(M,\Z )$.   We note that there exists a representative
$\Sigma\in [\Sigma ]$ which is a closed connected embedded surface since $[\Sigma ]$ is of co-dimension 1.
 This surface is orientable because
it represents a non-trivial element of $H_{2}(M,\Z )$;  any closed non-orientable surface has trivial
$H_{2}$ so could not represent a non-trivial element of $H_{2}(M,\Z )$.    Thus we have $\psi (\gamma ) =\gamma\cdot \Sigma$
for all $\gamma\in \pi_{1}M$.
In particular for any curve $\gamma$ with $\psi (\gamma ) =1$ we have  $\gamma\cdot\Sigma =1$.  This, along with the fact that $\Sigma$ is orientable, implies that
$\Sigma$ is non-separating.  For if $M-\Sigma= Y_{1}\sqcup Y_{2}$ is a disjoint union, then since $\Sigma$ is 2-sided, then after
moving $\gamma$ by an isotopy, $\gamma-\Sigma = \gamma-\{ {\rm point}\}\approx (0,1)$ would connect points of $Y_{1}$ to points of $Y_{2}$, which is impossible.

\end{proof}

Since $\Sigma$ is orientable and non-separating, it has a collar which we denote $C(\Sigma )$.  Let
\[  X = \overline{M - C(\Sigma )} = M - {\rm int}(C(\Sigma )).\]

Note that $X$ has two boundary components $\Sigma^{-}$ and $\Sigma^{+}$.

%\begin{defi} Let $S^{+}$ and $S^{-}$ be closed, oriented connected surfaces.   We say that they are {\bf homology cobordant} %if there is a compact oriented 3-manifold $W$ such that $\partial W=S^{-}\sqcup S^{+}$ and if the inclusions of $ S^{\pm}$ in 
%$W$  induce isomorphisms $H_{*}(S^{\pm};\Z) \longrightarrow H_{*}(W;\Z)$. We refer to $W$ as a {\bf homology cobordism}.   %More generally, we say that $W$ defines a {\bf rational (mod $p$)} {\bf homology cobordism}
%if the maps on homology induced by the inclusions are isomorphisms with $\Q$ (with $\Z/p\Z$) coefficients.
%\end{defi}

\begin{prop}\label{freepart} If $\Sigma$ is of genus $g$ then
\[  H_{1}X \cong H_{1}\Sigma \oplus {\rm Tor}(H_{1}X)\cong \Z^{2g}\oplus  {\rm Tor}(H_{1}X). \]
In other words, ${\rm Free}(H_{1}X)\cong\Z^{2g}$.
\end{prop}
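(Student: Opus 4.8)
The plan is to use a Mayer–Vietoris argument for the decomposition $M = X \cup C(\Sigma)$ and then peel off the torsion. First I would recall that $X$ is a compact orientable 3-manifold with boundary $\Sigma^- \sqcup \Sigma^+$, each a copy of the genus-$g$ surface $\Sigma$. The key observation is that the inclusion of the closed surface $\Sigma$ into $X$ (say via $\Sigma^+$) induces a map $H_1\Sigma \to H_1 X$, and I claim this map is injective onto a direct summand whose complement is exactly the torsion of $H_1 X$. The cleanest route is half-lives-half-dies: for a compact orientable 3-manifold $X$ with boundary, the kernel of $H_1(\partial X;\Q) \to H_1(X;\Q)$ has dimension exactly half of $\dim_\Q H_1(\partial X;\Q)$. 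Here $\partial X = \Sigma^- \sqcup \Sigma^+$ has $H_1(\partial X;\Q) \cong \Q^{4g}$, so the kernel is $2g$-dimensional and hence the image is $2g$-dimensional.

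Next I would show that the restriction to a single component $\Sigma^+$ is already injective rationally, so that $H_1\Sigma^+ \to H_1 X$ has $2g$-dimensional image. This uses the fact that $\Sigma$ is non-separating: there is a curve $\gamma$ in $M$ with $\gamma \cdot \Sigma = 1$, which one can arrange to meet $\Sigma$ transversely in a single point, so cutting along $\Sigma$ it becomes an arc in $X$ running from $\Sigma^-$ to $\Sigma^+$. Pushing $\Sigma^-$ across this arc exhibits $\Sigma^-$ and $\Sigma^+$ as homologous in $X$ (more precisely, $[\Sigma^-] - [\Sigma^+]$ is killed by the obvious 2-chain, the trace of the isotopy). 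Combined with the fact that $[\Sigma^-] + [\Sigma^+] = [\partial X] = 0$ in $H_1 X$... — actually the cleanest statement is that $H_1\Sigma^- \to H_1 X$ and $H_1\Sigma^+ \to H_1 X$ have the same image, so the half-lives-half-dies bound of $2g$ on the image of $H_1(\partial X)$ is realized already by either single boundary copy; hence $\Sigma^+ \hookrightarrow X$ is rationally injective. Therefore ${\rm rank}\, H_1 X = 2g$.

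To upgrade from ranks to the splitting $H_1 X \cong \Z^{2g} \oplus {\rm Tor}(H_1 X)$ with the $\Z^{2g}$ coming from $\Sigma$, I would note that a finitely generated abelian group always splits as free part plus torsion, so it only remains to check that the composite $H_1\Sigma \to H_1 X \to {\rm Free}(H_1 X)$ is an isomorphism. It is injective after tensoring with $\Q$ by the previous paragraph, and since $H_1\Sigma \cong \Z^{2g}$ is free and the target is $\Z^{2g}$, injectivity with full rank plus the fact that the image is a finite-index subgroup would suffice; to get that the index is actually $1$ I would use Poincaré–Lefschetz duality on $X$: the intersection form pairs $H_1 X$ with $H_2(X,\partial X)$, and a dual argument shows the image of $H_1\Sigma$ is a primitive (i.e. pure) sublattice. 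Alternatively, and more simply, one observes that a collar neighborhood $C(\Sigma) \cong \Sigma \times [-1,1]$ deformation retracts to $\Sigma$, and applying Mayer–Vietoris to $M = X \cup C(\Sigma)$ together with the known $H_1 M = \Z \oplus {\rm Tor}(H_1 M)$ pins down $H_1 X$ exactly.

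The main obstacle I anticipate is not the rank count — half-lives-half-dies handles that immediately — but verifying that the copy of $\Z^{2g}$ sitting inside $H_1 X$ as the image of $H_1\Sigma$ is a \emph{direct summand} rather than merely a finite-index subgroup of the free part. This is where one genuinely needs either the duality/primitivity argument on $X$ or a careful Mayer–Vietoris bookkeeping; the rest is standard. I would lead with the Mayer–Vietoris approach since it simultaneously gives the rank, the splitting, and the identification of the free part with $H_1\Sigma$ in one stroke.
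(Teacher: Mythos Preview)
Your proposal aims at a stronger statement than the paper actually proves. Reread the proposition and its proof: the paper opens by declaring the statement ``equivalent to showing that $\dim H_{1}(X;\Q)=2g$,'' and that is literally all it proves. The isomorphism $H_{1}X\cong H_{1}\Sigma\oplus{\rm Tor}(H_{1}X)$ is an \emph{abstract} isomorphism of abelian groups (both free parts are $\Z^{2g}$), not a claim that the inclusion $\Sigma^{+}\hookrightarrow X$ realizes a direct-summand splitting. So your entire ``main obstacle'' paragraph about primitivity is chasing something the paper never asserts and does not need; indeed, later the paper works around exactly this issue by passing to $\Q$-coefficients and linking numbers (Lemma \ref{Xduality}), and Lemma \ref{almostandiso} together with Example \ref{rationalcobordism} show that the relevant integral maps can genuinely fail to be surjective.

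There is also a gap in your half-lives-half-dies step: knowing that the image of $H_{1}(\partial X;\Q)\to H_{1}(X;\Q)$ is $2g$-dimensional gives only ${\rm rank}\,H_{1}X\geq 2g$, not equality. You never use $b_{1}M=1$ in that paragraph, and without it the upper bound is unavailable. (Your argument that $H_{1}\Sigma^{-}$ and $H_{1}\Sigma^{+}$ have the same image in $H_{1}X$ is also muddled: the arc from $\Sigma^{-}$ to $\Sigma^{+}$ shows the \emph{surfaces} are homologous in $H_{2}$, which says nothing about curves on them in $H_{1}$.)

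That said, your Mayer--Vietoris alternative is a perfectly good route to the rank computation, and it is genuinely different from what the paper does. The paper instead uses Lefschetz duality $H_{1}(X;\Q)\cong H^{2}(M,\Sigma;\Q)\cong H_{2}(M,\Sigma;\Q)$ and then runs the long exact sequence of the pair $(M,\Sigma)$, using $b_{1}M=1$ to force $\partial:H_{2}(M,\Sigma;\Q)\to H_{1}(\Sigma;\Q)$ to be an isomorphism. Your Mayer--Vietoris for $M=X\cup C(\Sigma)$ would reach the same conclusion once you observe that every class in $H_{1}(X;\Q)$ and $H_{1}(\Sigma;\Q)$ dies in $H_{1}(M;\Q)$ (intersection with $\Sigma$ is zero, so the image lands in torsion). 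Either approach is fine; just drop the primitivity discussion and stop once you have the rank.
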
 

\begin{proof}  Observe that the statement of the Proposition is equivalent to showing that
\[  {\rm dim} H_{1}(X;\Q ) =2g. \]  For the remainder of the proof, all homology and cohomology will be with $\Q$ coefficients.
Next, we have
\[   H_{1}(X) \cong H^{2}(M, {\rm int} (C(\Sigma ))   ) \cong H^{2}(M, \Sigma )\]
where the first isomorphism is by Lefschetz duality (see \cite{Sp}, page 297) and the second isomorphism follows because $\Sigma$ is a
deformation retract of ${\rm int} (C(\Sigma ))$.  Since we are working with $\Q$-coefficients,
the universal coefficient theorem implies that
\[  H^{2}(M, \Sigma )\cong H_{2}(M, \Sigma ).\]
(Since $H_{1}M\cong\Q$ is free, ${\rm Ext}(H_{1}M,\Q )=0$ which implies $H^{2}(M, \Sigma )\cong {\rm Hom}(H_{2}(M,\Sigma),\Q )$;
but $H_{2}(M,\Sigma)$ is a $\Q$-vector space, so ${\rm Hom}(H_{2}(M,\Sigma),\Q )\cong H_{2}(M,\Sigma)$.)
So it will be enough to show that $ H_{2}(M, \Sigma )$ has dimension $2g$.

Let us consider the long exact sequence in homology of the pair $(M, \Sigma )$ :
\[  \cdots\longrightarrow H_{2}\Sigma  \stackrel{i_{2}}{\longrightarrow} H_{2}M \stackrel{j_{2}}{\longrightarrow} H_{2}(M, \Sigma )  \stackrel{\partial}{\longrightarrow} H_{1}\Sigma \stackrel{i_{1}}{\longrightarrow} H_{1}M \stackrel{j_{1}}{\longrightarrow} H_{1}(M,\Sigma) \longrightarrow \cdots\]
Notice that 
\[ H_{2}M\cong H^{1}M\cong H_{1}M\cong\Q,\]
where the first isomorphism is by Poincar\'{e} duality, the second by the universal coefficient theorem and the last one because $b_{1}M=1$.
Note that $j_{1}$ is injective, since $\Sigma$ intersects once a generator of ${\rm Free}(H_{1}(M,\Z ))$  and therefore when this generator
is mapped to $H_{1}(M,\Sigma)$, it persists.  
%\marginpar{Give more detail.}

Thus ${\rm Ker}(j_{1})=0 = {\rm Im}(i_{1})$ by exactness, so that $i_{1}$ is the zero map.
Thus ${\rm Ker}(i_{1}) = H_{1}\Sigma = {\rm Im}(\partial )$ again by exactness.  Thus $\partial$ is onto and $H_{2}(M, \Sigma )$ has
dimension $\geq 2g$.  On the other hand, $[\Sigma]$ generates an infinite cyclic subgroup of $H_{2}M$ since it corresponds by duality to $\psi$ which
is a free cohomology class.  So $i_{2}$ is injective and $\Q \cong{\rm Im}(i_{2} ) = {\rm Ker}(j_{2} )$.  Since ${\rm Ker}(j_{2} )\not=0$, it follows since
$H_{2}M\cong\Q$ that we must have that ${\rm Ker}(j_{2} ) = H_{2}M$.  Hence $j_{2}$ is the 0 map, which implies that ${\rm Ker}(\partial ) =0$
i.e. $\partial$ is injective and therefore an isomorphism.
\end{proof}

The argument above can be modified to show that $\partial$ is an isomorphism modulo torsion in $\Z$-coefficients.  More precisely,

\begin{lemm}\label{almostandiso}   The homomorphism 
\[ \partial : H_{2}(M, \Sigma ;\Z)\longrightarrow  H_{1}(\Sigma ;\Z )\]
satisfies 
\begin{enumerate}
\item ${\rm Ker}(\partial)\subset {\rm Tor}(H_{2}(M, \Sigma ;\Z))$. 
\item ${\rm Coker}(\partial )$
is a finite group.
\end{enumerate}
\end{lemm}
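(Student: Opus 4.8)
The plan is to bootstrap everything from the rational computation carried out in the proof of Proposition \ref{freepart}. That argument does more than compute $\dim H_{1}(X;\Q)$: running it through, one sees that the connecting homomorphism $\partial_{\Q}\colon H_{2}(M,\Sigma;\Q)\to H_{1}(\Sigma;\Q)$ of the long exact sequence of the pair $(M,\Sigma)$ with $\Q$-coefficients is shown there to be both injective ($j_{2}=0$, so $\ker\partial=0$) and surjective ($i_{1}=0$, so $\partial$ onto), hence an isomorphism. So the first step is simply to record this strengthened form of Proposition \ref{freepart}: $\partial_{\Q}$ is an isomorphism.

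The second step is to pass between $\Z$- and $\Q$-coefficients using flatness of $\Q$ over $\Z$. For any space $Y$ the universal coefficient theorem gives a natural identification $H_{n}(Y;\Q)\cong H_{n}(Y;\Z)\otimes_{\Z}\Q$ (the $\mathrm{Tor}$ term vanishes since $\Q$ is flat), and under these identifications $\partial_{\Q}$ is identified with $\partial\otimes\mathrm{id}_{\Q}$. Flatness then gives $\ker(\partial\otimes\mathrm{id}_{\Q})=\ker(\partial)\otimes_{\Z}\Q$, and right-exactness of $-\otimes_{\Z}\Q$ gives $\mathrm{Coker}(\partial\otimes\mathrm{id}_{\Q})=\mathrm{Coker}(\partial)\otimes_{\Z}\Q$. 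Since $\partial_{\Q}$ is an isomorphism, both $\ker(\partial)\otimes_{\Z}\Q$ and $\mathrm{Coker}(\partial)\otimes_{\Z}\Q$ vanish.

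The third step converts these vanishings into the two assertions. An abelian group $B$ with $B\otimes_{\Z}\Q=0$ is a torsion group (every element has finite order); applying this to $B=\ker(\partial)$, which is a subgroup of $H_{2}(M,\Sigma;\Z)$, gives $\ker(\partial)\subset\mathrm{Tor}(H_{2}(M,\Sigma;\Z))$, which is (1). For (2), observe that $\mathrm{Coker}(\partial)$ is a quotient of $H_{1}(\Sigma;\Z)\cong\Z^{2g}$ and hence finitely generated; a finitely generated abelian group that becomes $0$ after tensoring with $\Q$ has trivial free part, so it is finite. (Finite generation of $H_{2}(M,\Sigma;\Z)$ and of $H_{1}(\Sigma;\Z)$ is automatic, $M$ being a closed $3$-manifold and $\Sigma$ a closed surface.)

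I do not expect a genuine obstacle here. The only points needing a little care are verifying that the proof of Proposition \ref{freepart} really establishes that $\partial_{\Q}$ is an isomorphism (rather than merely a dimension count for $H_{1}(X;\Q)$) and the routine identification of $\partial_{\Q}$ with $\partial\otimes\mathrm{id}_{\Q}$. A more hands-on alternative would be to re-run the long exact sequence argument directly over $\Z$, tracking how the torsion of $H_{1}M$ and of $H_{2}(M,\Sigma;\Z)$ can obstruct the conclusions "$i_{1}=0$" and "$\partial$ is onto"; but the flatness argument sidesteps that bookkeeping entirely, which is why I would use it.
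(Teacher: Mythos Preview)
Your argument is correct. The flatness of $\Q$ over $\Z$ does exactly what you claim: it identifies $\partial_{\Q}$ with $\partial\otimes\mathrm{id}_{\Q}$, and since tensoring with a flat module preserves kernels while tensoring with any module preserves cokernels, the vanishing of $\ker(\partial)\otimes\Q$ and $\mathrm{Coker}(\partial)\otimes\Q$ follows from $\partial_{\Q}$ being an isomorphism. Your check that the proof of Proposition~\ref{freepart} genuinely establishes $\partial_{\Q}$ is an isomorphism (and not merely a dimension count) is well placed; the paper's argument does end by concluding exactly that.

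The paper takes the route you flag as the ``more hands-on alternative'': it re-runs the long exact sequence of $(M,\Sigma)$ with $\Z$-coefficients, arguing directly that $i_{2}$ is injective with free image, hence $\mathrm{Im}(j_{2})=\ker(\partial)$ is a quotient of $H_{2}(M;\Z)$ by a finite-index free subgroup and therefore finite; then it uses the rank computation $H_{2}(M,\Sigma;\Z)\cong\Z^{2g}\oplus\mathrm{Tor}$ (borrowed from the rational result) to conclude that $\partial$ restricted to the free part has rank-$2g$ image in $\Z^{2g}$, giving finite cokernel. Your flatness argument is cleaner and avoids tracking where the torsion sits in the integral sequence; the paper's approach, on the other hand, extracts slightly more concrete information (e.g.\ that $\ker(\partial)$ is actually finite, not just torsion---though of course this follows from (1) once one knows $H_{2}(M,\Sigma;\Z)$ is finitely generated). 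Both reach the same conclusion with comparable effort.
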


\begin{proof}  Consider the long exact sequence of the pair appearing in the proof of Proposition \ref{freepart}, but now with $\Z$-coefficients.  The map
$i_{2}$ is still injective and its image is still free (by how we defined $\Sigma$), but here we can no longer assert that $i_{2}$ is onto.  Nevertheless, this implies
that ${\rm Ker}(j_{2})={\rm Im}(i_{2})$ is free.  We claim that this implies that ${\rm Im}(j_{2})$ is a finite group.  For $H_{2}( M; \Z)\cong \Z\oplus {\rm Tor}(H_{2}(M;\Z))$, since we saw in
the proof of Proposition \ref{freepart} that $H_{2}(M;\Q)\cong H_{2}(M;\Z)\otimes\Q\cong\Q$.  Therefore ${\rm Ker}(j_{2})\subset{\rm Free}(H_{2}(M;\Z))\cong\Z$ and $j_{2}$ induces a map having domain the finite group
$H_{2}(M;\Z)/{\rm Ker}(j_{2})$, so  ${\rm Im}(j_{2})$ is a finite group as claimed.  By exactness, ${\rm Ker}(\partial )$ is finite, which proves (1).  Now since $ H_{2}(M, \Sigma ;\Q)\cong \Q^{2g}$,
$H_{2}(M, \Sigma ;\Z)\cong\Z^{2g}\oplus {\rm Tor}(H_{2}(M, \Sigma ;\Z))$ which by (1) means that  ${\rm Ker}(\partial)\subset  {\rm Tor}(H_{2}(M, \Sigma ;\Z))$.  Therefore $\partial$ restricted to the free part
of $H_{2}(M, \Sigma ;\Z)\cong\Z^{2g}$ maps onto a subgroup of rank $2g$ of $H_{1}(\Sigma ;\Z)\cong\Z^{2g}$.  This implies (2).

\end{proof}

 \marginpar{} 
 \begin{exam}\label{rationalcobordism}
For an example where $\partial$ is not surjective, consider the case
where $M$ is the mapping torus 
\[ \T_{A} = \T^{2}\times [0,1]/\sim_{A},\quad (x,0)\sim_{A} (Ax,1)\] associated
to the hyperbolic matrix
 \[ A = \left(\begin{array}{cc}
                                               3 & 2 \\
                                               1 & 1
                                     \end{array}\right).\]
 Since there is no simple closed curve $c\subset\T^{2}$ such that $A^{n}(c)$
 is isotopic to $c$ for some $n$, we have $b_{1}  \T_{A}=1$.  The ``Seifert
 surface" $\Sigma$ in this case is the image of $\T^{2}\times \{ 0 \}$
 in $\T_{A}$.   On the other hand, we know
(see \cite{Mc}) that 
\[ \Delta_{\T_{A}}(t) = {\rm det}(A-It) =  t^{2}-4t+1.  \]
By Lemma \ref{nonzero} of \S \ref{cha} below, the order of ${\rm Tor}(H_{1}\T_{A} )$
is $|\Delta_{\T_{A}}(1)|=2$.  The map $i_{1}:H_{1}\T^{2}\rightarrow H_{1}\T_{A}$, which is induced by the identification $\T^{2}\approx\Sigma\subset \T_{A}$
 has image ${\rm Tor}(H_{1}\T_{A}) \cong\Z/2\Z$: indeed, due to the defining identifications
 if we denote by $x=(1,0),y=(0,1)$ the basis of $\Z^{2}=H_{1}\T^{2}$, then $i_{1}(x) =x=A(x)= 3x+2y$
 and $i_{1}(y)= y=A(y)=x+y$ which implies that $2(x+y)=0$ and $x=0$ and therefore $2y=0$.
 In particular, ${\rm Ker}(i_{1})={\rm Im}(\partial )$
is a proper subgroup of $ H_{1}\T^{2}$.  Therefore, $\partial $ is not onto.      
 \end{exam}

We now use the surface $\Sigma$ to construct the universal infinite cyclic cover of $M$.
Take a countable collection $\{ X_{i}\}$, $i\in\Z$, of copies of $X$, and glue them such that $\Sigma^{+}_{i-1}$ is identified with $\Sigma^{-}_{i}$
by the ``re-gluing" homeomorphisms.  Denote the result $\widehat{M}$.

\begin{theo}  $\widehat{M}$ is a universal infinite cyclic cover of $M$.
\end{theo}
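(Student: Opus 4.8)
The plan is to realise $q\colon\widehat{M}\to M$ as the quotient map of a free $\Z$-action, so that it is automatically a regular covering with infinite cyclic deck group, and then to invoke Corollary \ref{Alexunique} (noting that $H(M)\cong\Z$ since $b_{1}M=1$) to identify it with the universal free abelian cover of $M$, which for $b_{1}M=1$ is precisely what is meant by a universal infinite cyclic cover.

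First I would note that $\widehat{M}$ is connected: since $\Sigma$ is non-separating, $X=\overline{M-C(\Sigma)}$ is connected, and consecutive copies $X_{i}$ and $X_{i+1}$ are glued along a (nonempty) copy of $\Sigma$, so $\widehat{M}=\bigcup_{i\in\Z}X_{i}$ is connected. Next, let $\sigma\colon\widehat{M}\to\widehat{M}$ be the shift homeomorphism taking $X_{i}$ to $X_{i+1}$ via the canonical identifications $X_{i}\cong X\cong X_{i+1}$, compatibly with the re-gluing identifications; then $\sigma$ generates a $\Z$-action which is free (each $\sigma^{n}$ with $n\neq0$ strictly shifts the index of every copy, hence has no fixed points) and properly discontinuous in the strong sense (a small enough neighbourhood of any point can be taken inside $X_{i-1}\cup X_{i}$ for a suitable $i$, and its $\sigma^{n}$-translates are disjoint from it for $n\neq0$). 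The quotient $\widehat{M}/\langle\sigma\rangle$ identifies all the $X_{i}$ to a single copy of $X$ with $\Sigma^{-}$ glued to $\Sigma^{+}$ by the re-gluing homeomorphism; since that homeomorphism is, by the way $\widehat{M}$ was built, the one that reconstructs $M$ from $X$ (re-inserting, equivalently collapsing, a collar of $\Sigma$), the quotient is canonically homeomorphic to $M$. Because a free, strongly properly discontinuous action of a discrete group gives a covering onto its quotient with that group as the group of deck transformations, the induced map $q\colon\widehat{M}\to M$ is a regular covering whose deck group is infinite cyclic.

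Finally, since $b_{1}M=1$ we have $H(M)=H_{1}M/\mathrm{Tor}(H_{1}M)\cong\Z$, so $q$ is a cover of $M$ with deck group $H(M)$, and Corollary \ref{Alexunique} gives that it is isomorphic, as a covering of $M$, to the universal free abelian cover — that is, $\widehat{M}$ is a universal infinite cyclic cover of $M$. (Alternatively, one can identify $q_{*}\pi_{1}\widehat{M}$ with $K^{\text{fr-ab}}=\ker\psi$ directly: the lift of a loop $\gamma$ transverse to $\Sigma$ and based in $\mathrm{int}(X)$ ends in the copy $X_{\gamma\cdot\Sigma}$, since passing through each gluing surface changes the copy-index by the local intersection sign, so the lift closes up iff $\gamma\cdot\Sigma=0$, which by the Theorem relating $\psi$ to intersection with $\Sigma$ means $\psi(\gamma)=0$.) I expect the only point requiring genuine care to be the homeomorphism $\widehat{M}/\langle\sigma\rangle\cong M$ — equivalently, pinning down that the re-gluing homeomorphisms are exactly the ones for which $q$ is a well-defined local homeomorphism across the gluing surfaces — together with the routine but slightly fiddly bookkeeping of collars; everything else is the standard fact that a cut-and-stack construction yields an evenly covered quotient.
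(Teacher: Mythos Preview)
Your argument is correct and follows the same cut-and-stack idea as the paper, which simply defines the map $X_{i}\to X\to X/{\sim}\approx M$ and asserts it is an infinite cyclic covering. You phrase it dually via the free $\Z$-shift action and are more explicit on points the paper leaves implicit (connectedness of $\widehat{M}$, and the appeal to Corollary~\ref{Alexunique} needed to pass from \emph{an} infinite cyclic cover to \emph{the} universal one).
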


\begin{proof}  We show that there exists an infinite cyclic covering $p:\widehat{M}\rightarrow M$.  Each point in $X_{i}$ is mapped
to its counterpart in $X\subset M$.  Now we map $X$ onto $\overline{X} = X/\sim$ where $x\sim y$ if $x,y\in\partial X$ correspond.  The gluing
used to define $\widehat{M}$ is compatible with the relation $\sim$ so we get a covering map
$\widehat{M}\rightarrow \overline{X}\approx M$ which is infinite cyclic.
\end{proof}

\subsection{An Analogue of the Seifert Matrix} \label{seifertmat}

A key fact which we will need in order to prove that $\Delta_{M}$ is unit symmetric is the existence of a basis
of the homology of $X$ dual to a given basis of the homology of $\Sigma$.  However,
in view of Lemma \ref{almostandiso}, 
we will not be able to do this for integral homology, as one
does for knot complements \cite{Ro}.   In order to address this complication, we will work instead with homology with $\Q$-coefficients.

We begin with a notion of linking number valid in $M$.  Consider disjoint oriented simple closed curves $l_{1},l_{2}\subset M$ whose integral homology classes belong to
${\rm Tor}(H_{1}(M;\Z))$.  Then there exists integers $n_{1},n_{2}$ such that $[n_{1}l_{1}] = 0 = [n_{2}l_{2}]$, and therefore there exist compact and oriented surfaces
$S_{1}, S_{2}\subset M$ with $\partial S_{1} = n_{1}l_{1}$, $\partial S_{2} = n_{2}l_{2}$.

\begin{defi}\label{linked}  The {\bf linking number} of $l_{1}$ with $l_{2}$ is 
\[   {\rm lk}(l_{1}, l_{2}) =  \frac{ ( n_{2}l_{2})\cdot S_{1}  }{n_{1}n_{2}}=  \frac{ l_{2}\cdot S_{1}  }{n_{1}} \in \Q .  \]
\end{defi}
Since we have divided by $n_{1}n_{2}$, the linking number does not depend on the $n_{i}$ chosen so that $[n_{i}l_{i}]=0$.  

\begin{note}   We have ${\rm lk}(l_{1}, l_{2}) =- {\rm lk}(l_{2}, l_{1})$, just as
in the case of the classical linking number.
%To see this, it is enough to suppose that $n_{1}=n_{2}=1$.  Then
%every essential intersection of $l_{1}$ with $S_{2}$ produces an essential intersection of $l_{2}$
%with $S_{1}$, but with $l_{2}$ intersecting in the opposite sense with respect
%to the orientation of $S_{1}$. 
\end{note}
The linking number defined here is for pairs of simple closed curves $l_{1}$ and $l_{2}$, and only depends
on the isotopy type of $l_{1}\cup l_{2}$.   We can in fact extend it bi-linearly to rational multiples $q_{1}l_{1}$, $q_{2}l_{2}$ by the formula
\[  {\rm lk}(q_{1}l_{1}, q_{2}l_{2}) := q_{1}q_{2}\cdot {\rm lk}(l_{1}, l_{2}) .\]

%So this linking number may be viewed as defining a function 
%\[ {\rm lk}:{\rm Tor}(H_{1}M)\times {\rm Tor}(H_{1}M)\longrightarrow\Q .\]

%This notion of linking number can be extended to $\Q$ homology.  That is, if $l_{i}$, $i=1,2$ are two
%closed curves and $\alpha_{i}=q_{i}l_{i}$ are trivial in $H_{1}(M;\Q )$, then there exist integers $n_{i}$ so that $n_{i}\alpha_{i}$ are %trivial integral homology classes, and we define 
%\[ {\rm lk}(\alpha_{1}, \alpha_{2}) : = \frac{ {\rm lk}(n_{1}\alpha_{1}, n_{2}\alpha_{2})}{n_{1}n_{2}} \]

%\begin{defi}  The {\bf linking number mod p} of $l_{1}$ with $l_{2}$ is 
 %\[   {\rm lk}(l_{1}, l_{2}) =  \frac{ ( n_{2}l_{2})\cdot S_{1}  }{n_{1}n_{2}}=  \frac{ l_{2}\cdot S_{1}  }{n_{1}} \in \Q .  \]
%\end{defi}

We now choose special generating sets for the homology of the boundary components of $X$.

Fix bases $\{ a_{i}\}$, $\{ a_{i}^\pm\}$ of $H_{1}(\Sigma;\Z )$, $H_{1}(\Sigma^\pm;\Z )$; where $\{ a_{i}^\pm\}$ is a push-off in the $\pm$ normal direction of $\{ a_{i}\}$. When viewed in $M$, they give elements of Tor$(H_{1}M)$. 
We can choose $n\in\Z$ so that 
$\{na_{i}\}$, $\{na_{i}^{\pm}\}$ are homologous to $0$ in $H_{1}(M;\Z )$ and not just torsion (for example we could take
$n=\prod n_{i}$).

Now let $\{\bar{a}_{i}\}$, $\{ \bar{a}_{i}^{\pm}\}$ be $\{ na_{i}\}$, $\{ n{a}_{i}^{\pm}\}$, these elements give bases of $H_{1}(\Sigma;\Q )$, $H_{1}(\Sigma^{\pm};\Q)$. 

We define a square matrix $V_{\Q}=(\bar{v}_{ij})$ by
\[    \bar{v}_{ij} = {\rm lk} ( \bar{a}_{i}^{+}, \bar{a}_{j} ) .          \]
Notice that this is well-defined since these curves are torsion as elements of $H_{1}(M;\Z)$, and therefore
their linking numbers are defined.  

  Also note that
\[ \bar{v}_{ij} =  {\rm lk}( \bar{a}_{i}, \bar{a}^{-}_{j} )  \] 
so that \[ V_{\Q}^{T}=(\bar{v}_{ij}^{T}), \quad \text{where} \quad \bar{v}^{T}_{ij} =  {\rm lk} ( \bar{a}_{j}, \bar{a}^{-}_{i} ).\]  
%\marginpar{Prove this}

We now specify a basis of $H_{1}(X;\Q)$ dual to the basis $\{ \bar{a}_{i}\}$ of $H_{1}( \Sigma;\Q )$ with respect to the bilinear pairing ${\rm lk} (\cdot ,\cdot )$.  

\begin{lemm}\label{Xduality}   There exists a  basis $\bar{\beta}_{1},\dots ,\bar{\beta}_{2g}$ of  $H_{1}(X;\Q)$ such that viewed in $M$
\[   {\rm lk}(\bar{a}_{i},\bar{\beta}_{j} ) = \delta_{ij}   \] 
\end{lemm}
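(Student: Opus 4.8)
The plan is to produce the dual basis $\{\bar\beta_j\}$ by exhibiting the linking pairing between $H_1(\Sigma;\Q)$ and $H_1(X;\Q)$ as a perfect (non-degenerate) pairing, so that the existence of a dual basis is automatic from linear algebra. First I would recall that by Proposition \ref{freepart} both $H_1(\Sigma;\Q)$ and $H_1(X;\Q)$ have dimension $2g$, so it suffices to show that the bilinear form
\[
  B\colon H_1(\Sigma;\Q)\times H_1(X;\Q)\longrightarrow \Q,\qquad B(\bar a,\bar\beta)={\rm lk}_M(\bar a,\bar\beta)
\]
is non-degenerate; then choosing $\{\bar\beta_j\}$ to be the basis dual to $\{\bar a_i\}$ under $B$ gives ${\rm lk}(\bar a_i,\bar\beta_j)=\delta_{ij}$. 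Note the linking number is legitimate here: a class $\bar a_i$ pushed into $X\subset M$ lies in ${\rm Tor}(H_1(M;\Z))$ (since $H_1(M;\Q)$ is one–dimensional and is carried by a curve meeting $\Sigma$ once, while $\Sigma$ contributes nothing to the free part), and likewise any rational class in $H_1(X;\Q)$ maps into ${\rm Tor}(H_1(M;\Z))\otimes\Q$ after clearing denominators, so Definition \ref{linked} applies.

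The heart of the argument is the non-degeneracy, and here I would use the same tool as in the proof of Proposition \ref{freepart}: Lefschetz/Poincaré–Lefschetz duality for the pair $(M,\Sigma)$ together with the cutting picture $M = X\cup_{\Sigma^\pm} C(\Sigma)$. Concretely, the linking pairing ${\rm lk}_M$ between disjoint rationally–null-homologous cycles is computed by taking a rational $2$–chain $S_1$ with $\partial S_1 = n_1\bar a_i$ and intersecting with $\bar\beta_j$; choosing $S_1$ to be a copy of $\Sigma$ capped off appropriately (this is where the collar $C(\Sigma)$ and the surfaces furnished by Lemma \ref{almostandiso}, whose $\partial$ is a rational isomorphism $H_2(M,\Sigma;\Q)\to H_1(\Sigma;\Q)$, enter), one identifies $B$ up to the change-of-basis matrix relating $\{\bar a_i\}$ to a geometric Lefschetz-dual basis. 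Equivalently, I would identify $H_1(X;\Q)\cong H^2(M,\Sigma;\Q)\cong H_2(M,\Sigma;\Q)$ (exactly the chain of isomorphisms used in Proposition \ref{freepart}) and check that under this identification the linking pairing with $H_1(\Sigma;\Q)$ becomes the intersection pairing $H_2(M,\Sigma;\Q)\times H_1(\Sigma;\Q)\to\Q$ composed with $\partial$, which is non-degenerate because $\partial$ is a $\Q$–isomorphism and the relative intersection pairing is perfect.

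The main obstacle I anticipate is bookkeeping the identifications carefully enough to see that the linking pairing really is the perfect duality pairing and not merely a pairing of the right dimensions — in particular, making precise the claim that capping a bounding multiple of $\bar a_i$ by (a piece of) $\Sigma$ and intersecting with a cycle in $X$ recovers the algebraic-topological duality pairing, and checking that the factors of $n$ introduced when passing from the torsion classes $a_i$ to the null-homologous classes $\bar a_i=na_i$ cancel correctly. Once non-degeneracy of $B$ is established, defining $\bar\beta_j$ as the $B$-dual basis of $\bar a_i$ finishes the proof; since $\{\bar a_i\}$ is a basis of $H_1(\Sigma;\Q)$, such a dual basis exists and is unique, and it lies in $H_1(X;\Q)$ by construction.
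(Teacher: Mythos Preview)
Your proposal is correct and follows essentially the same route as the paper: identify $H_{1}(X;\Q)\cong H^{2}(M,\Sigma;\Q)\cong H_{2}(M,\Sigma;\Q)$ via Lefschetz duality and universal coefficients, use that $\partial\colon H_{2}(M,\Sigma;\Q)\to H_{1}(\Sigma;\Q)$ is an isomorphism, and observe that under these identifications the linking pairing becomes the (perfect) intersection pairing $\bar\beta\cdot\bar S$ with $\partial\bar S=\bar a$. The paper simply carries this out by explicitly writing $\bar S_{j}=\partial^{-1}(\bar a_{j})$ and taking $\bar\beta_{i}$ to be the Lefschetz dual of the functional $f_{i}$ satisfying $f_{i}(\bar S_{j})=\delta_{ij}$, whereas you phrase the same content as ``the pairing is perfect, so a dual basis exists''; the substance is identical.
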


\begin{proof}  The map $\partial : H_{2}(M,\Sigma ;\Q )\rightarrow H_{1}(\Sigma;\Q )$ is an isomorphism.  If $\bar{a}_{i}$ is one of the generators of $H_{1}(\Sigma;\Q )$ specified above, then $\partial^{-1}(\bar{a}_{i})$ is a multiple ${n}_{i}S_{i}$ of a surface $S_{i}$. We may assume ${n}_{i}$ are integers by choosing $n$ large enough, we write $\bar{S}_{i}={n}_{i}S_{i}$. Thus we obtain a generating set $\bar{S}_{1},\dots ,\bar{S}_{2g}$ of $H_{2}(M,\Sigma ;\Q )$ in
 which $\bar{S}_{i}$ is the image in $H_{2}(M,\Sigma ;\Q )$ of the class $n_{i}S_{i}\in H_{2}(M,\Sigma ;\Z)$.

But 
\[ H^{2}(M,\Sigma ;\Q )
%\cong  {\rm Hom}(H_{2}(M,\Sigma ; \Z/p\Z),  \Z/p\Z) 
\cong H_{2}(M,\Sigma ; \Q)\cong \Q^{2g}\]
where the first isomorphism is a consequence of the universal coefficient theorem 
(see Corollary 4, page 244 of \cite{Sp}) and the second isomorphism comes from composition of isomorphisms
\begin{diagram} H_{2}(M,\Sigma ;\Q )& \rTo^{\partial}_{\cong} H_{1}(\Sigma;\Q )\cong \Q^{2g}.\end{diagram}
Therefore, $H^{2}(M,\Sigma ;\Q )$
has a basis dual to $\{\bar{S}_{i}\}$: given by cohomology classes $f_{1},\dots , f_{2g}$ with $f_{i}({n}_{j}S_{j} ) ={\delta}_{ij}$.  Here we are identifying cohomology classes with functionals of homology.

By Lefschetz duality we have $H_{1}(X;\Q )\cong H^{2}(M,\Sigma ;\Q)$. The duality isomorphism is given by the intersection pairing.  Therefore, if $\bar{\beta}\in H_{1}(X;\Q) $ and $\bar{S}\in H_{2}(M,\Sigma;\Q )$ then the $\Q $-Lefschetz duality isomorphism is induced by 
\[ \bar{S} \mapsto \bar{\beta}\cdot \bar{S} .\]
In particular, if we let $\bar{\beta}_{i}$ be such that the above function coincides with $f_{i}$, then we have 
\[ \bar{\beta}_{i}\cdot \bar{S}_{j}=f_{i}(\bar{S}_{j} )={\delta}_{ij}.\]  But $\partial (\bar{S}_{j})=\bar{a}_{j}$.
This implies that $ {\rm lk}(\bar{\beta}_{i}, \bar{a}_{j})={\delta}_{ij}$.
\end{proof}

Recall the construction of $\widehat{M}$ given at the end of \S \ref{SeifSurf}.  The Mayer-Vietoris Theorem applied to $\widehat{M}$ shows us that as a $\Lambda$-module
\[ H_{1}(\widehat{M};\Z ) \cong (H_{1}(X;\Z )\otimes\Lambda )/{\rm relations} \] where the relations are given by the gluing identifications 
\begin{align}\label{system}  a_{i}^{-} - ta_{i}^{+}=0,\quad i=1,\dots ,2g  
\end{align}
plus the torsion relations (which do not involve $t$):
\[  m_{1}\mu_{1} =0,\dots , m_{k}\mu_{k} =0   \]
where the $\mu_{i}$ are generators of the torsion of $H_{1}(X;\Z )$.  Hence the Alexander matrix has the form
\[   \left(  \begin{array}{cccc}
A(t) & 0 &\cdots & 0 \\
0 & m_{1} & \cdots & 0 \\
\vdots &  \vdots & \ddots & \vdots \\
0 & 0 & \cdots &m_{k}
\end{array}
    \right)   \]
    where $A(t)$ is a $2g\times 2g$ matrix corresponding to the system (\ref{system}).
    
    Let $\Lambda_{\Q}$ be the group ring $\Q [t^{\pm 1}]$ with coefficients in $\Q$. Then $H_{1}(\widehat{M};\Q )$
    is a $\Lambda_{\Q}$-module, and its presentation is given by
    \[ H_{1}(\widehat{M} ;\Q ) \cong (H_{1}(X;\Z )\otimes\Lambda_{\Q}) /{\rm relations} \] 
   where the relations are 
   \[  na_{i}^{-} - tna_{i}^{+} ,\quad i=1,\dots ,2g,\] 
   or in other words
    \[  \bar{a}_{i}^{-} - t\bar{a}_{i}^{+} ,\quad i=1,\dots ,2g.\] 
    Thus these are the only relations we have and the $\Q$ Alexander matrix is 
  \[A_{\Q}(t)=nA(t).\]  
  
  Recall that if $f(t) = \sum_{i=m}^{n}b_{i}t^{i}$ is a Laurent polynomial, where $b_{m},b_{n}\not=0$, then the degree is defined ${\rm deg}(f) = n-m$.  This notion
of degree is invariant with respect to multiplication by units.
  If we let $\Delta_{\Q}(t)$ be the ``Alexander polynomial'' associated to $A_{\Q}(t)$, then 
  \[ \Delta_{\Q}(t) := det (A_{\Q}(t))=
\frac{n^{2g}}{m_{1}...m_{k}}\Delta (t).\]
   We can see that $\Delta (t)$ and $\Delta_{\Q}(t)$
  have the same degree.  
 Our strategy will be to show that $\Delta_{\Q}(t)$ has even degree.

\begin{theo}\label{seifertmatrix}   $V_{\Q} - tV_{\Q}^{\sc T}=A_{\Q}(t)$.
\end{theo}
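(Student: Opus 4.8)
The plan is to exhibit $A_\Q(t)$ as the presentation matrix of the $\Lambda_\Q$-module $H_1(\widehat{M};\Q)$ coming from the explicit generators and relations described just before the theorem, computed with respect to the basis $\{\bar{\beta}_1,\dots,\bar{\beta}_{2g}\}$ of $H_1(X;\Q)$ furnished by Lemma \ref{Xduality}, and then to read off its entries using the linking pairing.

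\emph{Step 1 (coordinates of the relations).} Since $\dim_\Q H_1(X;\Q)=2g$ by Proposition \ref{freepart}, the classes $\bar{\beta}_1,\dots,\bar{\beta}_{2g}$ form a free $\Lambda_\Q$-basis of $H_1(X;\Z)\otimes\Lambda_\Q$. The Mayer--Vietoris presentation recalled above has relations $\bar{a}_i^- - t\bar{a}_i^+$, $i=1,\dots,2g$. Writing $\bar{a}_i^+=\sum_j P^+_{ij}\bar{\beta}_j$ and $\bar{a}_i^-=\sum_j P^-_{ij}\bar{\beta}_j$ in $H_1(X;\Q)$, the $i$-th relation has coordinate vector $(P^-_{ij}-tP^+_{ij})_j$, so the associated $2g\times 2g$ presentation matrix is $A_\Q(t)=(P^-)^T-t(P^+)^T$.

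\emph{Step 2 (identifying $P^\pm$ with $V_\Q$).} First I would check that the linking number descends to a well-defined $\Q$-bilinear pairing in the relevant range: for a curve $\ell$ whose class is torsion in $H_1(M;\Z)$, any two surfaces bounding multiples of a fixed curve differ by a $2$-cycle, and the resulting ambiguity is $\ell\cdot c[\Sigma]=c\,\psi(\ell)=0$ because $H_2(M;\Q)\cong\Q$ is generated by $[\Sigma]$ and $\psi$ kills torsion; hence $\operatorname{lk}(\cdot,\ell)$ is well-defined on $H_1(M;\Q)$. Moreover every $\bar{\beta}_j$, viewed in $M$, has torsion class, since a loop carried by $X$ is disjoint from $\Sigma$ and so has $\psi$-value $0$; thus $H_1(X;\Q)\to H_1(M;\Q)$ is the zero map. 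Granting this, push the identities $\bar{a}_i^\pm=\sum_j P^\pm_{ij}\bar{\beta}_j$ forward into $H_1(M;\Q)$ and apply $\operatorname{lk}(\cdot,\bar{a}_k)$; using $\operatorname{lk}(\bar{\beta}_j,\bar{a}_k)=\delta_{jk}$ from the proof of Lemma \ref{Xduality}, together with $\bar{v}_{ij}=\operatorname{lk}(\bar{a}_i^+,\bar{a}_j)=\operatorname{lk}(\bar{a}_i,\bar{a}_j^-)$ and the symmetry of the linking pairing, one gets $P^+_{ik}=\operatorname{lk}(\bar{a}_i^+,\bar{a}_k)=\bar{v}_{ik}$ and $P^-_{ik}=\operatorname{lk}(\bar{a}_i^-,\bar{a}_k)=\operatorname{lk}(\bar{a}_k,\bar{a}_i^-)=\bar{v}_{ki}$, i.e.\ $P^+=V_\Q$ and $P^-=V_\Q^T$. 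Substituting into Step 1 gives $A_\Q(t)=(V_\Q^T)^T-t(V_\Q)^T=V_\Q-tV_\Q^T$.

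\emph{Expected obstacles.} The coordinate bookkeeping is routine; the delicate points are (a) confirming that the linking pairing is symmetric on the torsion classes in play, and (b) aligning the orientation, sign, and transpose conventions in the Mayer--Vietoris relations and in Lemma \ref{Xduality} so that the answer comes out as $V_\Q-tV_\Q^T$ exactly rather than as its transpose or negative (an ambiguity that is harmless later, since $\Delta_\Q=\det A_\Q(t)$ is insensitive to it). I would also reconcile the statement of Lemma \ref{Xduality} ($\operatorname{lk}(\bar{a}_i,\bar{\beta}_j)=\delta_{ij}$) with its proof ($\operatorname{lk}(\bar{\beta}_i,\bar{a}_j)=\delta_{ij}$) and verify that the two formulas offered for $\bar{v}_{ij}$ are genuinely equal given how the push-offs $\bar{a}_i^\pm$ lie on the two boundary components $\Sigma^\pm$ of $X$.
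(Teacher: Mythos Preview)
Your approach is essentially the same as the paper's: expand each $\bar{a}_i^{\pm}$ in the basis $\{\bar{\beta}_j\}$ of $H_1(X;\Q)$ and then extract the coefficients by pairing with $\bar{a}_k$ via the duality relation of Lemma~\ref{Xduality}, thereby identifying the presentation matrix of the Mayer--Vietoris relations with $V_\Q-tV_\Q^T$. The paper's argument is terser and does not pause over the well-definedness and sign/transpose issues you (appropriately) flag---indeed its Note asserts $\mathrm{lk}(l_1,l_2)=-\,\mathrm{lk}(l_2,l_1)$ while you use symmetry---but as you observe these discrepancies are immaterial for $\det A_\Q(t)$.
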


\begin{proof}  
 $H_{1}(X;\Q )$ has generators the $\bar{\beta}_{i}$ and we may write therefore
\[ \bar{a}_{i}^{\pm} = \sum c_{ij}^{\pm}\bar{\beta}_{j } .  \]
If we take the linking number both sides of this equation with $\bar{a}_{j}$, we get by Lemma \ref{Xduality} that
\[ c_{ij}^{\pm} = -{\rm lk} ( \bar{a}_{i}^{\pm},  \bar{a}_{j} ).\] 
It follows that the relations (\ref{system}) may be re-written
\[   \sum {\rm lk} ( \bar{a}_{i}^{-}, \bar{a}_{j} ) \bar{\beta}_{j} - t   \sum {\rm lk} ( \bar{a}_{i}^{+},  \bar{a}_{j} ) \bar{\beta}_{j}=0. \]
By our definition of $V_{\Q}$ and our identification of $V_{\Q}^{T}$ the relations (\ref{system}) may be re-written
\[\sum (\bar{v}_{ij}-t\bar{v}_{ji})\bar{\beta}_{j} =0, \quad i=1,\dots ,2g.\]
So
\[ A_{\Q}(t) = V_{\Q}-tV_{\Q}^{T},\] as claimed.
%Interchanging the roles of the ends $\Sigma^{-}$, $\Sigma^{+}$ we get that $V_{p}^{T}-tV_{p}$ is also a relation matrix.
\end{proof}

\subsection{Characterization}\label{cha}

We begin with the following corollary of Theorem \ref{seifertmatrix}:

\begin{coro}\label{modpsymm}  $\Delta_{\Q}(t)$ is unit symmetric and in particular is of even degree.
\end{coro}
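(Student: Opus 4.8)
The plan is to run the classical Seifert--matrix symmetry argument, now over $\Q$. By Theorem \ref{seifertmatrix} we have
$\Delta_{\Q}(t) = \det(A_{\Q}(t)) = \det(V_{\Q} - tV_{\Q}^{T})$, where $V_{\Q}$ is a square $2g\times 2g$ matrix over $\Q$. Since a matrix and its transpose have the same determinant, $\Delta_{\Q}(t) = \det\!\bigl((V_{\Q} - tV_{\Q}^{T})^{T}\bigr) = \det(V_{\Q}^{T} - tV_{\Q})$. Now factor $-t$ out of the $2g\times 2g$ matrix, using $V_{\Q}^{T} - tV_{\Q} = -t\,(V_{\Q} - t^{-1}V_{\Q}^{T})$, to obtain $\Delta_{\Q}(t) = (-t)^{2g}\det(V_{\Q} - t^{-1}V_{\Q}^{T}) = t^{2g}\,\Delta_{\Q}(t^{-1})$.

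From the identity $\Delta_{\Q}(t) = t^{2g}\Delta_{\Q}(t^{-1})$ I would read off unit symmetry directly. Multiplying by the unit $u = t^{-g}\in\Lambda_{\Q}^{\times}$ and using $\iota(t^{-g}) = t^{g}$ together with $\iota(\Delta_{\Q})(t) = \Delta_{\Q}(t^{-1}) = t^{-2g}\Delta_{\Q}(t)$, one gets $\iota\bigl(t^{-g}\Delta_{\Q}\bigr) = t^{g}\cdot t^{-2g}\Delta_{\Q} = t^{-g}\Delta_{\Q}$, so $t^{-g}\Delta_{\Q}$ is genuinely symmetric and hence $\Delta_{\Q}$ is unit symmetric.

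For the parity statement: writing $t^{-g}\Delta_{\Q}(t) = \sum_{i} b_{i}t^{i}$, symmetry gives $b_{i} = b_{-i}$, so the support of $t^{-g}\Delta_\Q$ is symmetric about $0$; if $b_{n}$ is its top nonzero coefficient then $b_{-n}\ne 0$ is its bottom one, whence $\deg\bigl(t^{-g}\Delta_{\Q}\bigr) = n-(-n) = 2n$ is even. Since degree is unchanged under multiplication by units (as recalled just before Theorem \ref{seifertmatrix}), $\Delta_{\Q}$ itself has even degree. (If $\Delta_{\Q} = 0$ the assertion is vacuous; that $\Delta_{\Q}\ne 0$ will in any case follow from Lemma \ref{nonzero}.)

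I do not expect a real obstacle here beyond bookkeeping. The only two points that genuinely need the earlier development are that $V_{\Q}$ is honestly a \emph{square} matrix of \emph{even} size $2g$ --- which is precisely what Lemma \ref{Xduality} and the presentation discussion preceding Theorem \ref{seifertmatrix} supply --- and that $t^{-g}$ is a unit of $\Lambda_{\Q} = \Q[t^{\pm 1}]$, which is immediate. So the mild ``hard part'' is really just making sure the passage to $\Q$-coefficients (forced by Lemma \ref{almostandiso}) has been set up so that $\Delta_{\Q}$ is the determinant of $V_{\Q}-tV_{\Q}^{T}$ with $V_{\Q}$ square, after which the symmetry is purely linear algebra.
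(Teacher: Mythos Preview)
Your proof is correct and follows essentially the same route as the paper: both use Theorem \ref{seifertmatrix} to write $\Delta_{\Q}(t)=\det(V_{\Q}-tV_{\Q}^{T})$, take the transpose and factor out $-t$ to obtain the identity $\Delta_{\Q}(t)=t^{2g}\Delta_{\Q}(t^{-1})$, and conclude that $t^{-g}\Delta_{\Q}(t)$ is symmetric. Your treatment is a bit more explicit (spelling out the $\iota$-computation and the coefficient argument for even degree), but the substance is identical.
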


\begin{proof}  By Theorem \ref{seifertmatrix} we have
\[\Delta_{\Q}(t) = \det (V_{\Q}-tV^{T}_{\Q}) =\det (V_{\Q}^{T}-tV_{\Q}) \]
and therefore
\[\Delta_{\Q}(t^{-1}) = \det (V_{\Q}-t^{-1}V^{T}_{\Q}) =\det ((-t^{-1})\cdot( V^{T}_{\Q}-tV_{\Q})) =t^{-2g}\Delta_{\Q}(t) .\]
Then $f(t)=t^{-g}\Delta_{\Q}(t)$ is symmetric so $\Delta_{\Q}(t)$ is unit symmetric.  The last statement follows since odd degree polynomials cannot
be unit symmetric (they can be at most mod unit symmetric).
\end{proof}

\begin{theo}\label{unitsymmthm}  The Alexander polynomial $\Delta(t)$ of $M$ is unit symmetric.
\end{theo}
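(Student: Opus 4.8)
The plan is to transfer the unit symmetry of the rational polynomial $\Delta_{\Q}(t)$, already established in Corollary~\ref{modpsymm}, to the integral Alexander polynomial $\Delta(t)$. The bridge is the scalar identity recorded just before Theorem~\ref{seifertmatrix}: one has
\[ \Delta_{\Q}(t)\;=\;\frac{n^{2g}}{m_{1}\cdots m_{k}}\,\Delta(t), \]
so that $\Delta_{\Q}(t)=c\,\Delta(t)$ for a single positive rational constant $c$. First I would extract from the \emph{proof} of Corollary~\ref{modpsymm} (not just its statement) the explicit fact that $f(t):=t^{-g}\Delta_{\Q}(t)$ is genuinely symmetric, $\iota(f)=f$, where $g$ is the genus of the surface $\Sigma$; here it matters that the twisting factor produced there, $(-t^{-1})^{2g}=t^{-2g}$, is an \emph{even} power of $-t^{-1}$, so that no sign is introduced.

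Next I would use that $\iota$ is $\Q$-linear and that scaling by a nonzero rational preserves symmetry: from $t^{-g}\Delta(t)=c^{-1}f(t)$ we get $\iota\bigl(t^{-g}\Delta(t)\bigr)=c^{-1}\iota(f)=c^{-1}f=t^{-g}\Delta(t)$, so $t^{-g}\Delta(t)$ is symmetric. Finally, $\Delta(t)\in\Z[t^{\pm1}]$ and $t^{-g}$ is a monomial, hence $t^{-g}\Delta(t)\in\Z[t^{\pm1}]$; and $t^{-g}$ is a unit of $\Z[t^{\pm1}]$. By the definition of unit symmetry from \S\ref{LevineSection} this says precisely that $\Delta(t)$ is unit symmetric. (When $g=0$ the matrix $A_{\Q}(t)$ is empty and $\Delta(t)=m_{1}\cdots m_{k}$ is an integer constant, so the claim is immediate.)

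I do not expect a real obstacle, since the substance is all in Theorem~\ref{seifertmatrix} and Corollary~\ref{modpsymm}. The only delicate point is why the result does not follow formally from Blanchfield's mod-unit symmetry (Corollary~\ref{modunitsymmetric}) together with the equality of degrees $\deg\Delta=\deg\Delta_{\Q}$: mod unit symmetry gives only $\iota(\Delta)=\pm t^{j}\Delta$, and even with $j$ even one would still have to exclude the sign $-1$. The scalar-transfer argument avoids this because the symmetrizing unit $t^{-g}$ it produces carries coefficient $+1$, is manufactured over $\Z$, and is independent of $c$; the fact that $c$ is a unit of $\Q[t^{\pm1}]$ but not of $\Z[t^{\pm1}]$ is therefore harmless.
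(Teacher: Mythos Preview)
Your argument is correct and takes a different route from the paper's. The paper invokes Blanchfield's mod unit symmetry (Corollary~\ref{modunitsymmetric}) together with the fact, inherited from $\Delta_{\Q}$, that $\Delta$ has even degree, and asserts that these two facts together yield unit symmetry. You instead bypass Blanchfield entirely: you transport the explicit symmetrizing unit $t^{-g}$ from $\Delta_{\Q}$ to $\Delta$ via the scalar identity $\Delta_{\Q}=c\,\Delta$, using only that $\iota$ is $\Q$-linear and that a nonzero rational scalar is fixed by $\iota$.

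Your route is not merely different but tighter on exactly the point you flag at the end. Mod unit symmetry plus even degree gives only $\iota(\Delta)=\pm t^{2j}\Delta$, and degree considerations alone do not exclude the sign $-1$: for instance $t^{2}-1$ is mod unit symmetric of even degree yet not unit symmetric. The paper's proof does not visibly rule this sign out, whereas your argument manufactures the concrete symmetrizing unit $t^{-g}$ with coefficient $+1$, so the issue never arises. The only cost is that you must reach inside the proof of Corollary~\ref{modpsymm} for the identity $\Delta_{\Q}(t^{-1})=t^{-2g}\Delta_{\Q}(t)$ rather than quote just its statement, but that proof is a one-line determinant calculation.
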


\begin{proof}  By Corollary \ref{modpsymm} $\Delta (t)$ is of even degree.
By Blanchfield we know that any $\Delta_{M}(t)$ is mod unit symmetric.  Since the degree is even, this implies
that it is unit symmetric.
\end{proof}

\begin{lemm}\label{nonzero} For $b_{1}M=1$ the trace of the Alexander polynomial $\Delta_{M}$ is non-zero and its absolute value is equal to the order
of ${\rm Tor}H_{1}(M;\Z)$.
\end{lemm}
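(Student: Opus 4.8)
The plan is to evaluate the Alexander polynomial at $t=1$ and identify this value with $\pm|{\rm Tor}\,H_1(M;\Z)|$. The starting point is the presentation of the Alexander module obtained in \S\ref{seifertmat}: the integral Alexander matrix has block-diagonal form with an upper-left $2g\times 2g$ block $A(t)$ coming from the gluing relations $a_i^- - t a_i^+ = 0$, together with diagonal entries $m_1,\dots,m_k$ giving the torsion relations $m_j\mu_j=0$ in $H_1(X;\Z)$. Hence $\Delta_M(t) = \det A(t)$ up to a unit, and $\Delta_M(1) = \pm\det A(1)$.

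First I would interpret $\det A(1)$ topologically. Setting $t=1$ in the relations $a_i^- - t a_i^+$ amounts to identifying $\Sigma^-$ with $\Sigma^+$ in a single copy of $X$, i.e.\ gluing $X$ back up into $M$; algebraically, $H_1(\widehat M;\Z)\otimes_{\Lambda}\Z$ (with $\Z=\Lambda/(t-1)$) recovers $H_1(M;\Z)$ by a Mayer--Vietoris/Wang-sequence argument, since $\widehat M\to M$ is the infinite cyclic cover with deck group generated by $t$. More precisely, the full integral Alexander matrix evaluated at $t=1$ is a presentation matrix for $H_1(M;\Z)$, a square matrix because $H_1(M;\Z)\cong\Z\oplus{\rm Tor}\,H_1(M;\Z)$ has the same $\Q$-rank $1$ as... here one must be careful: the Alexander matrix is $2g+k$ square but presents a module whose rank drops by one at $t=1$, so $\det$ of the full matrix at $t=1$ is $0$. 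The correct statement is that $\det A(1) = \pm|{\rm Tor}\,H_1(X;\Z)\text{-adjusted quantity}|$; the cleanest route is to observe that $A(1) = V_\Q - V_\Q^T$ is (a scalar multiple of) the intersection form on $H_1(\Sigma)$, which is unimodular by Poincar\'e duality on the surface $\Sigma$, so $\det A(1) = \pm 1$ after accounting for the scaling factor $n$, and then the torsion order enters through the $m_j$'s and the factor $n^{2g}/(m_1\cdots m_k)$ relating $\Delta$ and $\Delta_\Q$.

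The concrete steps, in order: (i) recall from \S\ref{seifertmat} that $\Delta_\Q(t) = \det(A_\Q(t)) = \det(V_\Q - tV_\Q^T)$ and $\Delta_\Q(t) = \frac{n^{2g}}{m_1\cdots m_k}\Delta_M(t)$ up to units; (ii) evaluate at $t=1$: $\Delta_\Q(1) = \det(V_\Q - V_\Q^T)$, and show $V_\Q - V_\Q^T = n^2 J$ where $J$ is the (unimodular, skew-symmetric) intersection matrix of $\Sigma$ in the basis $\{a_i\}$, using that $\bar v_{ij} - \bar v_{ji} = {\rm lk}(\bar a_i^+,\bar a_j) - {\rm lk}(\bar a_j^+,\bar a_i)$ computes the intersection number $a_i\cdot a_j$ up to the scaling $n^2$ — this is where the collar $C(\Sigma)$ and the definition of push-offs are used; (iii) conclude $|\Delta_\Q(1)| = n^{2g}$ since $|\det J| = 1$; (iv) combine with (i) to get $|\Delta_M(1)| = |m_1\cdots m_k|$; (v) identify $|m_1\cdots m_k| = |{\rm Tor}\,H_1(X;\Z)|$ and then argue via the Wang exact sequence for $\widehat M\to M$ that ${\rm Tor}\,H_1(M;\Z)$ has the same order as the cokernel of $t-1$ acting appropriately, yielding $|{\rm Tor}\,H_1(M;\Z)| = |m_1\cdots m_k|$; in particular $\Delta_M(1)\neq 0$.

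The main obstacle I anticipate is step (ii)/(v): pinning down exactly how the linking-number matrix $V_\Q$ specializes at $t=1$ to the (unimodular) surface intersection form, keeping careful track of the scaling factor $n$ and the torsion denominators, and then matching $|m_1\cdots m_k|$ with $|{\rm Tor}\,H_1(M;\Z)|$ rather than with the torsion of $H_1(X;\Z)$ — these can differ, and reconciling them requires the Wang/Mayer--Vietoris sequence relating $H_1(X;\Z)$, $H_1(\widehat M;\Z)$ and $H_1(M;\Z)$ together with the fact established in \S\ref{seifertmat} that the only $t$-dependent relations are the $2g$ gluing relations. The symmetry results already proved (Corollary~\ref{modpsymm}, Theorem~\ref{unitsymmthm}) guarantee $\Delta_M$ has even degree, which rules out the degenerate possibility that $\Delta_M$ is divisible by $(t-1)$ forced by mod-unit-symmetry alone, so non-vanishing of the trace is consistent; but the quantitative identification of the trace with the torsion order is the substantive content.
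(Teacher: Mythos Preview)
Your setup is right, but the key step goes wrong: you assert that $H_{1}(\widehat{M};\Z)\otimes_{\Lambda}\Z$ (with $\Z=\Lambda/(t-1)$) ``recovers $H_{1}(M;\Z)$'', and hence that the square Alexander matrix must have determinant $0$ at $t=1$.  This is false.  The Wang sequence for the infinite cyclic cover reads
\[
\cdots \longrightarrow H_{1}(\widehat{M}) \xrightarrow{\,t-1\,} H_{1}(\widehat{M}) \longrightarrow H_{1}(M) \longrightarrow H_{0}(\widehat{M}) \xrightarrow{\,t-1\,} H_{0}(\widehat{M}) \longrightarrow \cdots,
\]
and since $t$ acts trivially on $H_{0}(\widehat{M})\cong\Z$, the map $H_{1}(\widehat{M})\to H_{1}(M)$ has cokernel $\Z$, not $0$.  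Thus $H_{1}(\widehat{M})/(t-1)$ is isomorphic to $G:=p_{\ast}(H_{1}\widehat{M})\subset H_{1}(M)$, and $H_{1}(M)\cong G\oplus\Z$; since $b_{1}M=1$ this forces $G={\rm Tor}(H_{1}M)$, a \emph{finite} group.  So the presentation matrix evaluated at $t=1$ presents a finite abelian group, and its determinant is $\pm|G|\not=0$.  This is exactly the paper's argument: it observes directly (via the short exact sequence of the covering, then abelianizing) that $\widehat{P}(1)$ presents $G={\rm Tor}(H_{1}M)$, whence $|\Delta_{M}(1)|=|\det\widehat{P}(1)|=|{\rm Tor}(H_{1}M)|$.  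No Seifert-matrix computation is needed.

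Your detour through $V_{\Q}-V_{\Q}^{T}$ is not only unnecessary but actually breaks at step (v): you want $|m_{1}\cdots m_{k}|=|{\rm Tor}\,H_{1}(M;\Z)|$, i.e.\ that ${\rm Tor}(H_{1}X)$ and ${\rm Tor}(H_{1}M)$ have the same order.  They need not.  In the paper's own Example~\ref{rationalcobordism} (the mapping torus $\T_{A}$) one has $X\approx\T^{2}\times[0,1]$ with $H_{1}X$ torsion-free, so $m_{1}\cdots m_{k}=1$, while $|{\rm Tor}(H_{1}M)|=|\Delta_{\T_{A}}(1)|=2$.  The missing factor is precisely $|\det A(1)|$, which your scaling bookkeeping in (ii)--(iii) loses.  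The fix is to abandon the detour: once you recognize that setting $t=1$ in the \emph{absolute} Alexander module gives only the torsion of $H_{1}M$, the lemma is immediate.
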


\begin{proof}
Let $\widehat{P}(t)$ be a presentation matrix for the $\Lambda$-module $H_{1}(\widehat{M})$, then $P:= \widehat{P}(1)$ is a presentation matrix
for $G$ = $p_{\ast}(H_{1}\widehat{M}) $ = the image of $H_{1}\widehat{M}$ in $H_{1}M$ by the covering map $p:\widehat{M}\rightarrow M$.   
Since $\widehat{M}$ is an infinite cyclic covering of $M$, we have the exact sequence

\[ 1\longrightarrow p_{\ast}(\pi_{1}(\widehat{M}))\subset \pi_{1}(M) \longrightarrow \Z \longrightarrow 0 . \]
By abelianizing $\pi_{1}M$, we obtain the sequence
\[   0\longrightarrow G\subset H_{1}(M) \longrightarrow \Z \longrightarrow 0   \]
which is exact: here we are using the fact that 
\begin{enumerate}
\item The image of the subgroup $p_{\ast}(\pi_{1}\widehat{M}))<\pi_{1}(M)$
by the abelianization map $\pi_{1}(M)\rightarrow H_{1}(M)$ is $G$.
\item The epimorphism $\psi :\pi_{1}(M)\rightarrow\Z$
satisfies ${\rm Ker}(\psi )\supset [\pi_{1}(X), \pi_{1}(X)]$.
\end{enumerate}

  Thus $H_{1}(M)\cong G\oplus\Z$ as abelian groups
 so that ${\rm Tor}(G)={\rm Tor}(H_{1}(M))$.  But $G$ is a finitely generated abelian group and so is isomorphic to $\Z^{r}\oplus \Z/n_{1}\Z\oplus\cdots \oplus \Z/n_{k}\Z$ for integers $r, n_{1},\dots n_{k}$, and $P$ must be equivalent to the diagonal presentation matrix ${\rm diag}(n_{1} ,\dots , n_{k})$.  Then
\[  \Delta_{M} (1) = {\rm det}(P)  = n_{1}\cdots n_{k} = | {\rm Tor}(H_{1}(M))  |. \]

\end{proof}

We can now conclude with the

\begin{proof}[Proof (Characterization Theorem)]  Immediate from Theorem \ref{SeifSurf} and Lemma \ref{nonzero}.
\end{proof}

\section{Manifolds with $b_{1}>1$}\label{>1}

In this chapter we consider Alexander polynomials of closed 3-manifolds with $b_{1}>1$.

\subsection{Manifolds with $b_{1}=2,3$}\label{b1=2,3}

As mentioned in the Introduction, the following closed 3-manifolds $M$ have
$\Delta_{M}=1$:
\begin{itemize}
\item[-] $\text{\sf H}_{3}(\R)/\text{\sf H}_{3}(\Z )$ = Heisenberg manifold \cite{Mc}, $b_{1}=2$.
\item[-]  $\T^{3}$ = the 3-torus, $b_{1}=3$.
\end{itemize}

Applying the generalized Levine's  theorem to the above examples, we have the following corollary:

\begin{coro} Let $M$ be a closed $3$-manifold with first Betti number 2 or 3. Then the set of symmetric Laurent polynomials in $2$ or $3$ variables with ${\rm tr} ( \lambda ) \ne 0$, is contained in the set of Alexander polynomials $\Delta_{M}$ with first Betti number equal to $2$ or $3$. \end{coro}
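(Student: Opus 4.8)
The plan is to deduce this directly from Theorem \ref{generalizedLevinestate}, applied to the two manifolds listed just above whose Alexander polynomial is trivial. First I would recall that the Heisenberg manifold $\text{\sf H}_{3}(\R)/\text{\sf H}_{3}(\Z)$ is a closed orientable $3$-manifold with $b_{1}=2$ and $\Delta=1$, and that the $3$-torus $\T^{3}$ is a closed orientable $3$-manifold with $b_{1}=3$ and $\Delta=1$, both facts being taken from \cite{Mc}. For $n\in\{2,3\}$ let $M_{0}$ denote the corresponding one of these manifolds, so that $b_{1}M_{0}=n$ and $\Delta_{M_{0}}=1$.

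Next, given a symmetric Laurent polynomial $\lambda$ in $n$ variables with ${\rm tr}(\lambda)\neq 0$, I would identify the deck group $H\cong\Z^{n}$ of the universal free abelian cover of $M_{0}$ with the ambient lattice so that $\lambda\in\Z[H]$, and apply Theorem \ref{generalizedLevinestate} with $M=M_{0}$. That theorem produces a closed $3$-manifold $M'$ with $b_{1}M'=b_{1}M_{0}=n$ and $\Delta_{M'}=\Delta_{M_{0}}\cdot\lambda=\lambda$. Hence every such $\lambda$ occurs as $\Delta_{M'}$ for a closed $3$-manifold of first Betti number $n$, which is exactly the asserted inclusion; since the argument applies verbatim for $n=2$ and $n=3$, the corollary follows. (This is, of course, just Corollary \ref{sufficiency} specialized to $b_{1}=2,3$, but phrased so as to make the role of the two distinguished examples explicit.)

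There is essentially no obstacle here beyond having Theorem \ref{generalizedLevinestate} in hand; the only points requiring a word of care are that the hypotheses of that theorem — $\lambda$ symmetric and ${\rm tr}(\lambda)\neq 0$ — are precisely what is assumed, and that, as observed in \S\ref{LevineSection}, the surgery coefficient $m={\rm tr}(\lambda)$ used in its proof is a well-defined nonzero integer. One may also note that Theorem \ref{generalizedLevinestate} delivers a manifold $M'$ that is again closed and orientable, so no further verification of the ambient category is needed.
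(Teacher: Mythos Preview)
Your proposal is correct and follows exactly the paper's approach: the corollary is stated immediately after recalling that the Heisenberg manifold ($b_{1}=2$) and $\T^{3}$ ($b_{1}=3$) have $\Delta=1$, and is obtained by applying Theorem \ref{generalizedLevinestate} to these examples, just as you do. No separate proof is given in the paper beyond this observation, so your write-up is in fact more detailed than the original.
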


\subsection{Manifolds with $b_{1}\geq 4$}\label{b1>1}

 In this section we will prove 
 
 \begin{theo}\label{b1grt3impDeltnot1} $\Delta_{M}\not =1$ for any closed $3$-manifold with $b_{1}M\geq 4$.
 \end{theo}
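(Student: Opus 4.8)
The plan is to show that for a closed orientable $3$-manifold $M$ with $b_1 M = n \ge 4$, the Alexander polynomial $\Delta_M$ cannot be the constant $1$. The natural strategy is to extract an obstruction from the fact that $\Delta_M = 1$ forces the Alexander module $A_M = H_1(\widehat M)$ to be "small" in a strong sense, which then constrains the cohomology of the universal free abelian cover $\widehat M$ — and ultimately collides with Poincaré duality on $M$ together with the rank inequality coming from $b_1 M = n$. Concretely, I would first recall that $\Delta_M = 1$ means $I_M = I_0(A_M)$ is an ideal whose smallest principal overideal is all of $\Lambda = \mathbb Z[t_1^{\pm 1},\dots,t_n^{\pm 1}]$; combined with the fact (Blanchfield-type duality, already invoked in \S\ref{blanchfield}) that $A_M$ has a square presentation matrix after localizing appropriately, one gets that $A_M$ is a torsion $\Lambda$-module whose order ideal is the unit ideal, i.e. $A_M \otimes_\Lambda \mathrm{Frac}(\Lambda) = 0$ and in fact $A_M$ is "pseudo-null" (annihilated by an ideal of height $\ge 2$).

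The key step is then a homological-algebra/duality argument: compute $H_*(\widehat M;\mathbb Q)$ as a module over $\Lambda_{\mathbb Q} = \mathbb Q[t_1^{\pm1},\dots,t_n^{\pm1}]$ and use Poincaré duality for the open $3$-manifold $\widehat M$ (equivalently, equivariant Poincaré duality for the pair $(M,\text{covering})$), which gives a duality between $H_k(\widehat M;\mathbb Q)$ and $H^{3-k}_c(\widehat M;\mathbb Q)$, and then relate compactly-supported cohomology back to homology via the local-cohomology / Ext spectral sequence over the regular ring $\Lambda_{\mathbb Q}$ (of Krull dimension $n$). The point is that $H_0(\widehat M;\mathbb Q) = \mathbb Q$ is the trivial module $\Lambda_{\mathbb Q}/(t_1-1,\dots,t_n-1)$, which over a ring of dimension $n$ has a nontrivial $\mathrm{Ext}^n$; duality then forces a nonzero contribution to $H_3(\widehat M;\mathbb Q)$ or to $H_2(\widehat M;\mathbb Q)$ that is incompatible with $\Delta_M = 1$ once $n \ge 4$, because $\Delta_M = 1$ would make $A_M = H_1(\widehat M)$ pseudo-null and then the Euler-characteristic / Turaev-torsion bookkeeping of the $\Lambda_{\mathbb Q}$-chain complex of $\widehat M$ (which has Euler characteristic $0$ since $\chi(M) = 0$) cannot be balanced. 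I would carry this out by writing the finite free $\Lambda_{\mathbb Q}$-chain complex $C_*(\widehat M;\mathbb Q)$ of the $3$-manifold, noting $H_0 = \mathbb Q$ trivial, $H_1 = A_M\otimes\mathbb Q$, and using that $H_3(\widehat M;\mathbb Q) = 0$ (non-compact $3$-manifold) together with Poincaré–Lefschetz duality $H_k \cong \overline{H^{3-k}_c}$ and the universal-coefficient/Ext spectral sequence $\mathrm{Ext}^p_{\Lambda_{\mathbb Q}}(H_q^c, \Lambda_{\mathbb Q}) \Rightarrow H^{p+q}$ to pin down the possible homological dimensions; the trivial module $\mathbb Q$ sitting in $H_0^c$ has projective dimension exactly $n$, and when $n \ge 4$ this produces a class in $H^{\ge 4}$, contradiction, unless it is "absorbed" by $A_M$ having projective dimension $\ge n-1 \ge 3$ — but $\Delta_M = 1$ would force $A_M$ to have projective dimension $\le 2$ (it would be presented by a matrix all of whose maximal minors generate the unit ideal, hence it is of "homological codimension $\ge$" the wrong thing), giving the contradiction.

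The main obstacle — and the part that needs the most care — is making the duality bookkeeping precise over the non-local, non-graded ring $\Lambda_{\mathbb Q} = \mathbb Q[t_i^{\pm1}]$: one must correctly set up equivariant Poincaré duality for the infinite cyclic-by-$\mathbb Z^{n-1}$ cover $\widehat M \to M$ (the cleanest route is Poincaré–Lefschetz duality for the manifold $M$ with coefficients in the $\Lambda$-module $\Lambda$ itself, i.e. the twisted chain complex, as in Milnor's and Blanchfield's treatments), and then invoke Auslander–Buchsbaum / the Ext spectral sequence correctly, keeping track of the $\iota$-twist that appears in the duality (it conjugates the module structure, but does not change projective dimension or the vanishing locus of the order ideal). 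I would then separately check the low-degree arithmetic: the trivial $\Lambda_{\mathbb Q}$-module $\mathbb Q$ has $\mathrm{Ext}^p_{\Lambda_{\mathbb Q}}(\mathbb Q,\Lambda_{\mathbb Q}) = 0$ for $p \ne n$ and $= \mathbb Q$ for $p = n$ (Koszul resolution on $t_1-1,\dots,t_n-1$), so when $n \ge 4$ a nonzero $\mathrm{Ext}^n$ must land in $H^{3-0} = H^3$ or lower via the spectral sequence for the $3$-manifold $M$ — impossible for $n > 3$ on dimension grounds — whereas if $\Delta_M = 1$ the module $A_M$ is too "thin" to carry the discrepancy. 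Assembling these pieces yields $\Delta_M \ne 1$.
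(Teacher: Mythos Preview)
Your overall strategy --- equivariant Poincar\'e duality for $\widehat M$ together with the universal--coefficient spectral sequence $E_2^{p,q}=\mathrm{Ext}^p_{\Lambda_{\mathbb Q}}(H_q,\Lambda_{\mathbb Q})\Rightarrow H^{p+q}$, and the Koszul class $\mathrm{Ext}^n_{\Lambda_{\mathbb Q}}(\mathbb Q,\Lambda_{\mathbb Q})\cong\mathbb Q$ in total degree $n$ --- is a legitimate and interesting line of attack, and it is \emph{completely different} from the paper's proof. The paper never touches the infinite cover or $\mathrm{Ext}$ over $\Lambda$: it passes to a \emph{finite} cover $\tilde M_p\to M$ with deck group $\mathbb F_p^{\,r}$ (where $r=b_1M$), invokes the Shalen--Wagreich inequality $d_p(\tilde M_p)\ge\binom{r}{2}$, and then uses the hypothesis $\Delta_M=1$ together with Turaev's torsion formula and Hironaka's Betti-number formula to force $d_p(\tilde M_p)=r$; for $r\ge 4$ these are incompatible.

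That said, your sketch contains a genuine gap at its key step. The assertion ``$\Delta_M=1$ would force $A_M$ to have projective dimension $\le 2$'' is not correct, and the parenthetical justification conflates two different things: $\Delta_M=1$ means only that the gcd of the maximal minors is a unit, i.e.\ the $0$th Fitting ideal has height $\ge 2$, \emph{not} that those minors generate the unit ideal. The conclusion you may draw is that $A_M\otimes\mathbb Q$ has \emph{grade} $\ge 2$, so $\mathrm{Ext}^i(A_M,\Lambda_{\mathbb Q})=0$ for $i<2$; this gives no upper bound on the projective dimension (the trivial module $\mathbb Q$ itself has ``$\Delta=1$'' and projective dimension $n$). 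Your spectral--sequence bookkeeping is also off by one: the differential that can hit $E_2^{n,0}\cong\mathbb Q$ is $d_2$ from $E_2^{n-2,1}=\mathrm{Ext}^{n-2}(H_1,\Lambda_{\mathbb Q})$, so the relevant threshold is $n-2$, not $n-1$.

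Your approach \emph{can} be repaired, but it needs an extra pass through duality that your proposal does not contain. First show $H_2(\widehat M;\mathbb Q)=0$: by duality $H_2\cong\overline{H^1}$, and in total degree $1$ the only $E_2$ terms are $\mathrm{Hom}(H_1,\Lambda_{\mathbb Q})=0$ (torsion) and $\mathrm{Ext}^1(\mathbb Q,\Lambda_{\mathbb Q})=0$ (as $n\ge 2$). Next, in total degree $2$ and with $n\ge 4$, the only surviving $E_2$ term is $\mathrm{Ext}^1(H_1,\Lambda_{\mathbb Q})$, so duality gives $H_1\cong\overline{\mathrm{Ext}^1(H_1,\Lambda_{\mathbb Q})}$. \emph{Now} the grade $\ge 2$ consequence of $\Delta_M=1$ kills the right-hand side, forcing $H_1\otimes\mathbb Q=0$. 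With $H_1,H_2,H_3$ all rationally zero there is nothing left to cancel the Koszul class in degree $n\ge 4$, which is the desired contradiction. This bootstrapping step --- using duality to get $H_1\cong\overline{\mathrm{Ext}^1(H_1,\Lambda_{\mathbb Q})}$ and hence $H_1=0$ --- is what is missing from your proposal; the projective-dimension heuristic you wrote does not substitute for it.
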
  

The proof of this theorem requires several facts which we summarize now.
Let $p$ be a prime and $\F_{p} =\Z/p\Z$. 
Recall that for a manifold $N$, the mod $p$ Betti numbers are defined
 \[  b_{k}(N;\F_{p} ) := {\rm rank}( H_{k}(N;\F_{p})).  \]
We will use the abbreviated notation  
\[  d_{p}(N) :=b_{1}(N;\F_{p} )\]
for the first mod $p$ Betti number.

\vspace{3mm}

  \begin{itemize} 
\item[\bf Fact 1.]  Let $\tilde{M}_{p}$ be the finite abelian cover of $M$ associated to the epimorphism
 \[ \psi_{p}:\pi_{1}M\longrightarrow H_{1}(M;\Z) \longrightarrow H_{1}(M;\F_{p} ) .\]
Let $r= d_{p}(M) $.  Using an inequality of Shalen and Wagreich \cite{SW},
we will deduce in \S \ref{SWsec} that
 \begin{align*} d_{p}(\tilde{M}_{p} ) \geq  \dbinom{r}{2} .\end{align*}

 \item[\bf Fact 2.]  Suppose that $\Delta_{M}=1$ and let $M'\rightarrow M$ be a finite abelian
 cover with deck group $\F_{p_{1}}\oplus\cdots \oplus \F_{p_{k}}$, $p_{1},\dots , p_{k}$ primes.  Then
 \begin{itemize}  
 \item[a.] The torsion subgroup of $H_{1}(M';\Z )$ is trivial. 
 \item[b.] $b_{1}M'=b_{1}M$.  This is a consequence of an equality of E. Hironaka \cite{Hiro}.
 \end{itemize}
These statements will be proved in \S \ref{b1grt4sec}.
\end{itemize}

\vspace{3mm}
  
  Assuming for the moment the above facts, we can now give the 
  
  \begin{proof}[Proof of Theorem \ref{b1grt3impDeltnot1}] We consider a cover $\tilde{M}_{p}$ as in Fact 1 with $p$ prime to the order of the torsion subgroup of $H_{1}(M)$, so that $ d_{p}(M) =b_{1}M$ and
  the deck group of $\psi_{p}$ is $\F_{p}^{b_{1}M}$.  Taking $M' = \tilde{M}_{p}$, it follows from Fact 2. (parts a. and b.)  that \[d_{p}(\tilde{M}_{p} )=d_{p}(M)= r.\]  But 
the inequality in Fact 1.\  is satisfied only for $r\leq 3$, since $r<\dbinom{r}{2}$ for all $r\geq 4$.
  \end{proof}
 
 \subsection{$\F_{p}$-Homology and Finite Cyclic Covers}\label{SWsec}
 
 In this section we will derive the inequality of Fact 1. above. 
 
 Fix a prime $p$ and let $G$ be a group.  If $A<G$ is a subgroup then we define 
   \[G\# A = [G,A]A^{p} := \langle [g,a]b^{p} \mid g \in G,a,b\in A\rangle ,\]
   where $[g,a]=gag^{-1}a^{-1}$ and $\langle X\rangle$  means the group generated by $X$.  
      
  We note that if $A\vartriangleleft G$ then $G\# A\vartriangleleft A$ and $A/(G\# A)$ is 
   an elementary abelian $p$-group i.e\ a direct sum
   of copies of $\F_{p}$.  
   The {\bf mod $p$ lower central series} $\{ G_{i}\}$ of $G$ is defined by 
   \[ G_{i+1}=G\# G_{i}\] where $G_{0}=G$.  By the above comments
   we have $G_{i+1}\vartriangleleft G_{i}$ and $G_{i}/G_{i+1}$ is an elementary abelian $p$-group for all $i=0,1,2,\dots$.
 See \cite{St}.

 Let $\Gamma=\pi_{1}M$ where $M$ is an orientable closed 3-manifold, and let $\{\Gamma_{i}\}$
 be its mod $p$ lower central series.  Let $r={\rm rank} (\Gamma/\Gamma_{1})$.   The following result
 appears as Lemma 1.3 in \cite{SW}:

  \begin{theo}[Shalen and Wagreich] $ {\rm rank}(\Gamma_{1}/ \Gamma_{2}) \geq  \dbinom{r}{2} $.
\end{theo}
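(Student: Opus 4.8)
The plan is to extract $\Gamma_1/\Gamma_2$ from the Lyndon--Hochschild--Serre five-term exact sequence of the group extension $1\to\Gamma_1\to\Gamma\to Q\to 1$, where $Q:=\Gamma/\Gamma_1$, and then to compare the mod-$p$ homology of $Q$ with that of $\Gamma$. Note first that $\Gamma_1=\Gamma\#\Gamma_0=[\Gamma,\Gamma]\Gamma^{p}$, so $Q=\Gamma^{\mathrm{ab}}\otimes\F_p=H_1(M;\F_p)$ is an elementary abelian $p$-group of rank $r$; in particular $r=\dim_{\F_p}H_1(M;\F_p)$. Taking all homology groups below with $\F_p$-coefficients and trivial action, the five-term sequence reads
\[ H_2(\Gamma)\longrightarrow H_2(Q)\longrightarrow H_1(\Gamma_1)_Q\longrightarrow H_1(\Gamma)\longrightarrow H_1(Q)\longrightarrow 0, \]
where $(-)_Q$ denotes the coinvariants for the conjugation action of $Q$.

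First I would observe that the rightmost map $H_1(\Gamma)\to H_1(Q)$ is an isomorphism: both groups are canonically $\Gamma^{\mathrm{ab}}\otimes\F_p$, because $Q$ is already elementary abelian. By exactness the adjacent map $H_1(\Gamma_1)_Q\to H_1(\Gamma)$ is then zero, so the sequence collapses to the right-exact sequence
\[ H_2(\Gamma)\longrightarrow H_2(Q)\longrightarrow H_1(\Gamma_1)_Q\longrightarrow 0. \]
Next I would identify $H_1(\Gamma_1)_Q$ with $\Gamma_1/\Gamma_2$. One has $H_1(\Gamma_1)=\Gamma_1/[\Gamma_1,\Gamma_1]\Gamma_1^{p}$, and passing to $Q$-coinvariants further quotients by the subgroup generated by all $[\gamma,x]$ with $\gamma\in\Gamma$ and $x\in\Gamma_1$; since $[\Gamma_1,\Gamma_1]\subseteq[\Gamma,\Gamma_1]$ this gives $\Gamma_1/[\Gamma,\Gamma_1]\Gamma_1^{p}=\Gamma_1/(\Gamma\#\Gamma_1)=\Gamma_1/\Gamma_2$. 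Hence $\Gamma_1/\Gamma_2\cong\operatorname{coker}\!\bigl(H_2(\Gamma)\to H_2(Q)\bigr)$, so that
\[ \operatorname{rank}(\Gamma_1/\Gamma_2)\;\geq\;\dim_{\F_p}H_2(Q)-\dim_{\F_p}H_2(\Gamma). \]

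To finish I would estimate the two dimensions. Since $Q\cong(\F_p)^{r}$, the standard computation of the homology of an elementary abelian $p$-group gives $\dim_{\F_p}H_2(Q)=\binom{r+1}{2}=\binom{r}{2}+r$ for every prime $p$. For $H_2(\Gamma)$: a CW model for $K(\Gamma,1)$ is obtained from a CW structure on $M$ by attaching cells of dimension $\geq 3$ only, so the cellular $\F_p$-chain complex is unchanged in degrees $\leq 2$ and $H_2(\Gamma)$ is therefore a quotient of $H_2(M;\F_p)$; by Poincar\'e duality and the universal coefficient theorem $\dim_{\F_p}H_2(M;\F_p)=\dim_{\F_p}H^1(M;\F_p)=\dim_{\F_p}H_1(M;\F_p)=r$, whence $\dim_{\F_p}H_2(\Gamma)\leq r$. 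Combining the estimates yields $\operatorname{rank}(\Gamma_1/\Gamma_2)\geq\bigl(\binom{r}{2}+r\bigr)-r=\binom{r}{2}$, as desired.

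The step I expect to require the most care is the identification $H_1(\Gamma_1)_Q\cong\Gamma_1/\Gamma_2$, where one has to track how the $p$-th powers enter the mod-$p$ lower central series — the point being that the relations $\Gamma_1^{p}$ are already built into $H_1(\Gamma_1)$, so the passage to coinvariants only needs to absorb the commutator relations $[\Gamma,\Gamma_1]$. A secondary point worth spelling out is that $H_2(\Gamma;\F_p)$ is a quotient of $H_2(M;\F_p)$ even though $M$ need not be aspherical; the cellular argument above settles this directly, whereas applying Hopf's formula over $\Z$ would introduce a spurious $\operatorname{Tor}$ term.
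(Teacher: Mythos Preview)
Your argument is correct. The paper itself does not prove this statement at all: its entire proof reads ``See \cite{SW} or \cite{La}.'' and defers to the original sources, so there is no approach in the paper to compare against. What you have written is in fact essentially the Shalen--Wagreich argument those references contain: the five-term exact sequence in $\F_{p}$-homology for the extension $1\to\Gamma_{1}\to\Gamma\to Q\to 1$, the identification $(H_{1}(\Gamma_{1};\F_{p}))_{Q}\cong\Gamma_{1}/\Gamma_{2}$, the computation $\dim_{\F_{p}}H_{2}((\F_{p})^{r};\F_{p})=\binom{r}{2}+r$, and the bound $\dim_{\F_{p}}H_{2}(\Gamma;\F_{p})\leq\dim_{\F_{p}}H_{2}(M;\F_{p})=r$ coming from Poincar\'e duality on the closed orientable $3$-manifold. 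The two points you single out for care are handled correctly: the mod-$p$ coinvariants absorb exactly the commutators $[\Gamma,\Gamma_{1}]$ while the $p$-th powers are already built into $H_{1}(\Gamma_{1};\F_{p})$, and the cellular argument (attach only cells of dimension $\geq 3$ to build $K(\Gamma,1)$ from $M$) legitimately shows $H_{2}(\Gamma;\F_{p})$ is a quotient of $H_{2}(M;\F_{p})$ without any asphericity hypothesis.
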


\begin{proof}  See \cite{SW} or \cite{La}.
%I have to learn any of the proofs for example in Sahlen and Wagreich they use the Stallings exact sequence
\end{proof}

We now give a geometric interpretation of this theorem in terms of first Betti numbers.  We start by noting that 
\begin{prop}
  $\Gamma/\Gamma_{1} \cong H_{1}(M;\F_{p}).$
  \end{prop}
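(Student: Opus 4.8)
The statement to prove is the final Proposition: $\Gamma/\Gamma_1 \cong H_1(M;\F_p)$, where $\Gamma = \pi_1 M$ and $\Gamma_1 = \Gamma \# \Gamma = [\Gamma,\Gamma]\Gamma^p$.

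The plan is to unwind the definitions. By definition $\Gamma_1 = [\Gamma,\Gamma]\Gamma^p = \langle [g,h]k^p : g,h,k \in \Gamma\rangle$. So $\Gamma/\Gamma_1$ is the quotient of $\Gamma$ by the normal subgroup generated by all commutators and all $p$-th powers. Quotienting by the commutator subgroup gives the abelianization $H_1(M;\Z)$, and then further quotienting by $p$-th powers gives $H_1(M;\Z)/p\,H_1(M;\Z) = H_1(M;\Z)\otimes \F_p$. Then by the universal coefficient theorem for homology, $H_1(M;\Z)\otimes\F_p \cong H_1(M;\F_p)$ provided the Tor term $\mathrm{Tor}(H_0(M;\Z),\F_p)$ vanishes; but $H_0(M;\Z) \cong \Z$ is free (since $M$ is connected), so that Tor term is zero and the isomorphism holds. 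I would write this as a short chain of isomorphisms.

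The key steps, in order: (1) observe $\Gamma_1$ contains $[\Gamma,\Gamma]$, so $\Gamma/\Gamma_1$ is abelian and equals a quotient of $H_1(M;\Z)$; more precisely, the image of $\Gamma_1$ in the abelianization is exactly $p\,H_1(M;\Z)$, since modulo $[\Gamma,\Gamma]$ the generators $[g,h]k^p$ become $p$-th powers $k^p$, i.e., multiples of $p$ in additive notation. Hence $\Gamma/\Gamma_1 \cong H_1(M;\Z)/p\,H_1(M;\Z)$. (2) Identify $H_1(M;\Z)/p\,H_1(M;\Z)$ with $H_1(M;\Z)\otimes_\Z \F_p$. (3) Apply the universal coefficient theorem, $0 \to H_1(M;\Z)\otimes\F_p \to H_1(M;\F_p)\to \mathrm{Tor}(H_0(M;\Z),\F_p)\to 0$, and note the last term vanishes because $H_0(M;\Z)$ is free abelian. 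Conclude $\Gamma/\Gamma_1\cong H_1(M;\F_p)$.

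There is essentially no obstacle here — this is a standard fact (it is why the mod $p$ lower central series starts with the mod $p$ homology), and the only thing to be careful about is the bookkeeping in step (1): checking that the normal subgroup generated by all $[g,h]k^p$ maps onto exactly $p\,H_1$ in the abelianization and not something larger or smaller. Since every $[g,h]$ dies in the abelianization and every $k^p$ maps to $p\cdot[k]$, and the classes $[k]$ range over all of $H_1(M;\Z)$, the image is precisely $p\,H_1(M;\Z)$. I would present this in two or three lines of displayed isomorphisms without belaboring it.

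\begin{proof}
By definition $\Gamma_1 = \Gamma\#\Gamma = [\Gamma,\Gamma]\Gamma^p$, the subgroup generated by all commutators $[g,h]$ and all $p$-th powers $k^p$, $g,h,k\in\Gamma$. Since $[\Gamma,\Gamma]\subset \Gamma_1$, the quotient $\Gamma/\Gamma_1$ is abelian, and the abelianization map $\Gamma\to H_1(M;\Z)$ induces a surjection $\Gamma/\Gamma_1\to H_1(M;\Z)/J$, where $J$ is the image of $\Gamma_1$. Each commutator maps to $0$ in $H_1(M;\Z)$, while each $p$-th power $k^p$ maps to $p\cdot[k]$; as $[k]$ ranges over all of $H_1(M;\Z)$, we get $J = p\,H_1(M;\Z)$. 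Hence
\[
\Gamma/\Gamma_1 \;\cong\; H_1(M;\Z)/p\,H_1(M;\Z) \;\cong\; H_1(M;\Z)\otimes_{\Z}\F_p .
\]
By the universal coefficient theorem in homology there is an exact sequence
\[
0\longrightarrow H_1(M;\Z)\otimes_{\Z}\F_p \longrightarrow H_1(M;\F_p)\longrightarrow \mathrm{Tor}(H_0(M;\Z),\F_p)\longrightarrow 0 .
\]
Since $M$ is connected, $H_0(M;\Z)\cong\Z$ is free, so $\mathrm{Tor}(H_0(M;\Z),\F_p)=0$ and the first map is an isomorphism. Combining, $\Gamma/\Gamma_1\cong H_1(M;\F_p)$.
\end{proof}
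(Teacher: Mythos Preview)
Your proof is correct and follows essentially the same route as the paper: both identify the image of $\Gamma_{1}=[\Gamma,\Gamma]\Gamma^{p}$ under abelianization as $p\,H_{1}(M;\Z)$, yielding $\Gamma/\Gamma_{1}\cong H_{1}(M;\Z)/pH_{1}(M;\Z)\cong H_{1}(M;\F_{p})$. One small wording slip: the induced map $\Gamma/\Gamma_{1}\to H_{1}(M;\Z)/J$ is an isomorphism (by the third isomorphism theorem), not merely a surjection, which is what you use in the next line anyway.
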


\begin{proof}  This will follow from showing that $\Gamma_{1}$ is the kernel of the projection $\psi_{p}:\Gamma \rightarrow H_{1}(M;\F_{p})$.  First, note that $\psi_{p}$ is the composition
 \[ \Gamma \longrightarrow H_{1}(M;\Z) \longrightarrow H_{1}(M;\F_{p}),\] and the image of $\Gamma_{1}=\Gamma\#\Gamma$ by the first
map in the composition is $p H_{1}(M;\Z)$ which is the kernel of the second map.  Thus $\Gamma_{1}\subset {\rm Ker}(\psi_{p})$. On the other hand, any element $\gamma\in  {\rm Ker}(\psi_{p})$ must belong to a coset of the form $b^{p}[\Gamma ,\Gamma ]$, and thus $\gamma\in\Gamma_{1}$.
\end{proof}

Let $\tilde{M}_{p}\rightarrow M$ be the finite abelian cover associated to the projection
$\psi_{p}: \Gamma\rightarrow H_{1}(M;\F_{p})$.
We have $\pi_{1}\tilde{M}_{p}\cong \Gamma_{1}$ and
\[ {\rm Deck}(\tilde{M}_{p}/M)\cong \Gamma/\Gamma_{1} \cong H_{1}(M;\F_{p}) \cong (\F_{p})^{r}. \]  

Recall the following notation that was used in the introduction to this chapter
\[ d_{p}(N):= {\rm rank} \; (H_{1}(N;\F_{p} )).\]
When $N=M$ we have $d_{p}(M)=r$.  We have the following Corollary to the Theorem of Shalen and Wagreich: 

\begin{coro} $d_{p}(\tilde{M}_{p})  \geq \dbinom{r}{2}  $.
\end{coro}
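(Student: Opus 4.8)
The plan is to translate the Corollary entirely into group theory, using the identification $\pi_1\tilde{M}_p\cong\Gamma_1$ established just above. First I would observe that since $\tilde{M}_p$ is a manifold with $\pi_1\tilde{M}_p\cong\Gamma_1$, the universal coefficient theorem gives
\[ H_1(\tilde{M}_p;\F_p)\cong H_1(\tilde{M}_p;\Z)\otimes\F_p\cong \Gamma_1^{\mathrm{ab}}\otimes\F_p\cong \Gamma_1/[\Gamma_1,\Gamma_1]\Gamma_1^{p}, \]
(the $\mathrm{Tor}$ term vanishes as $H_0(\tilde{M}_p;\Z)\cong\Z$ is free). Recognising $[\Gamma_1,\Gamma_1]\Gamma_1^{p}$ as exactly $\Gamma_1\#\Gamma_1$ in the notation of this section, we get $d_p(\tilde{M}_p)=\mathrm{rank}(\Gamma_1/(\Gamma_1\#\Gamma_1))$.

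Next I would compare $\Gamma_1\#\Gamma_1$ with $\Gamma_2=\Gamma\#\Gamma_1$. By definition $\Gamma\#\Gamma_1$ is generated by the elements $[g,a]b^{p}$ with $g\in\Gamma$ and $a,b\in\Gamma_1$, while $\Gamma_1\#\Gamma_1$ is generated by those same elements subject to the stronger restriction $g\in\Gamma_1$. Since $\Gamma_1\subset\Gamma$, every generator of $\Gamma_1\#\Gamma_1$ is a generator of $\Gamma_2$, so $\Gamma_1\#\Gamma_1\subset\Gamma_2$. Both $\Gamma_1/(\Gamma_1\#\Gamma_1)$ and $\Gamma_1/\Gamma_2$ are elementary abelian $p$-groups, i.e.\ $\F_p$-vector spaces, and the inclusion of subgroups yields a surjective $\F_p$-linear map $\Gamma_1/(\Gamma_1\#\Gamma_1)\twoheadrightarrow\Gamma_1/\Gamma_2$.

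Finally I would invoke the Theorem of Shalen and Wagreich, $\mathrm{rank}(\Gamma_1/\Gamma_2)\geq\binom{r}{2}$, to conclude
\[ d_p(\tilde{M}_p)=\mathrm{rank}(\Gamma_1/(\Gamma_1\#\Gamma_1))\geq \mathrm{rank}(\Gamma_1/\Gamma_2)\geq \binom{r}{2}. \]
There is no real obstacle here: the argument is short, and the only points requiring care are the clean identification of $H_1(\tilde{M}_p;\F_p)$ with $\Gamma_1/(\Gamma_1\#\Gamma_1)$ via universal coefficients and the elementary verification that $\Gamma_1\#\Gamma_1\subset\Gamma_2$ at the level of generators. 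Everything else is the already-proved Shalen–Wagreich bound and the fact that a surjection of finite-dimensional vector spaces cannot increase dimension.
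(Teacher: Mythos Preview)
Your proposal is correct and follows essentially the same route as the paper: identify $d_p(\tilde{M}_p)$ with $\mathrm{rank}(\Gamma_1/(\Gamma_1\#\Gamma_1))$, observe that $\Gamma_1\#\Gamma_1\subset\Gamma\#\Gamma_1=\Gamma_2$ so that $\Gamma_1/\Gamma_2$ is a quotient, and then apply the Shalen--Wagreich bound. The only cosmetic difference is that you spell out the identification $H_1(\tilde{M}_p;\F_p)\cong\Gamma_1/[\Gamma_1,\Gamma_1]\Gamma_1^p$ via universal coefficients, whereas the paper simply invokes the preceding Proposition with $\tilde{M}_p$ in place of $M$.
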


\begin{proof} We will show that $d_{p}(\tilde{M}_{p})  \geq {\rm rank} (\Gamma_{1}/\Gamma_{2} )$.  Applying the analysis
of the previous paragraphs 
to $\tilde{M}_{p} $ in place of $M$, we know that 
\[d_{p}(\tilde{M}_{p}) ={\rm rank} (H_{1}(\tilde{M}_{p};\F_{p})) ={\rm rank} (  \Gamma_{1} /(\Gamma_{1}\#\Gamma_{1})).\]
Now 
\[ \Gamma_{1} /(\Gamma_{1}\#\Gamma_{1}) = \Gamma_{1}/[\Gamma_{1},\Gamma_{1}](\Gamma_{1})^{p} .\]
But $\Gamma_{2} = \Gamma \# \Gamma_{1} = [\Gamma_{1},\Gamma](\Gamma_{1})^{p}$ so that
\[\Gamma_{1}/\Gamma_{2}=\Gamma_{1}/[\Gamma,\Gamma_{1}](\Gamma_{1})^{p}.\]
Thus $\Gamma_{1}/\Gamma_{2}$ is a quotient of $ \Gamma_{1} /(\Gamma_{1}\#\Gamma_{1})$ and 
the Corollary follows.
\end{proof}

\subsection{Closed 3-manifolds with $\Delta= 1$}\label{b1grt4sec}

We first recall the following result which relates the order of torsion in the homology of finite abelian covers to values of the Alexander polynomial:

\begin{theo}\label{TorFinAbCov}  Let $M'\rightarrow M$ be a finite abelian cover lying below the universal
free abelian cover $\widehat{M}\rightarrow M$, with deck group
$\F_{p_{1}}\oplus \cdots \oplus \F_{p_{n}}$, where the $p_{i}$ are primes.  
Assume that $\Delta_{M}(t_{1},\dots ,t_{n})$ has no zero of the form $(\rho_{1}^{e_{1}},\dots ,\rho_{n}^{e_{n}}) $ 
where $\rho_{i}$ is a $p_{i}$th root of unity.  Then
\[ | {\rm Tor}(H_{1}M' ) | = \Big| \prod   \Delta_{M}(\rho_{1}^{e_{1}},\dots ,\rho_{n}^{e_{n}}) \Big|  \]
where the product is over all $(e_{1},\dots ,e_{n})$ with $0\leq e_{i}<p_{i}$.
\end{theo}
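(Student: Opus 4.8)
The plan is to reduce the computation of $|{\rm Tor}(H_1 M')|$ to a linear-algebra computation involving the Alexander matrix $P$ of $M$, and then to diagonalize the relevant matrix over $\C$ using the character theory of the finite abelian deck group $G = \F_{p_1}\oplus\cdots\oplus\F_{p_n}$. First I would set up the covering combinatorially: since $M'\to M$ lies below the universal free abelian cover $\widehat M\to M$, the cover $\widehat M\to M'$ is itself a free abelian cover with deck group $K = \ker(\Z^n \to G)$, and $H_1(\widehat M;\Z) = A_M$ is the Alexander module over $\Lambda = \Z[\Z^n] = \Z[t_1^{\pm1},\dots,t_n^{\pm1}]$. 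The key homological input is that $H_1(M';\Z)$ is computed from $A_M$ as a $\Z[G]$-module by the Shapiro-type identification together with a spectral-sequence / transfer argument; concretely, I would use that $H_1(M';\Z) \cong A_M \otimes_{\Lambda} \Z[G] \oplus (\text{a free }\Z\text{-summand of rank }b_1 M')$ up to the usual low-degree complications, the free part being irrelevant for the torsion computation. Equivalently, $A_M\otimes_\Lambda \Z[G]$ is presented by the matrix $\bar P$ obtained from an Alexander matrix $P$ by substituting $t_i \mapsto$ (the image of $t_i$ in $\Z[G]$).

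Next I would compute $|{\rm Tor}(A_M\otimes_\Lambda \Z[G])|$. The ring $\Z[G]$ is an order in $\C[G] \cong \prod_\chi \C$, the product over characters $\chi\colon G\to\C^\times$; each such character sends $t_i$ to $\rho_i^{e_i}$ for a $p_i$-th root of unity $\rho_i$ and $0\le e_i < p_i$, and conversely every tuple $(\rho_1^{e_1},\dots,\rho_n^{e_n})$ arises. So after tensoring with $\C$, the presentation matrix $\bar P$ becomes block-diagonal with blocks $P(\rho_1^{e_1},\dots,\rho_n^{e_n})$, one for each character. The hypothesis that $\Delta_M$ has no zero of this form guarantees that each block is a square (after discarding redundant rows, using that $A_M$ has a square presentation matrix over the field of fractions, which one gets because $\Delta_M\ne 0$) nonsingular matrix, so each block contributes a finite cokernel of order $|\det P(\rho_1^{e_1},\dots,\rho_n^{e_n})| = |\Delta_M(\rho_1^{e_1},\dots,\rho_n^{e_n})|$ up to the discrepancy between the $0$th elementary ideal and the $0$th order ideal. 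Multiplying over all characters gives $\prod |\Delta_M(\rho_1^{e_1},\dots,\rho_n^{e_n})|$, and the index-theoretic bookkeeping that relates the order of a cokernel over $\Z[G]$ to the product of the orders of the cokernels over the $\C$-components (i.e. that $[\,\Z[G]\text{-lattice} : \text{sublattice}\,] = \prod_\chi [\,\text{image in }\C\text{-component}\,]$) is exactly the standard norm/resultant computation, as in McMullen's treatment of the Alexander polynomial and Mahler measure.

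The main obstacle I expect is making precise and correct the passage from $H_1(M';\Z)$ to $A_M\otimes_\Lambda\Z[G]$: this is not literally an isomorphism but holds modulo a free $\Z$-summand and modulo controlling $H_1$ of the various $0$- and $2$-cells, which requires either a careful Wang/Mayer–Vietoris argument for the intermediate $G$-cover or an appeal to the universal coefficient / Shapiro lemma for group homology of $\Z^n$ with coefficients in the $\Z[G]$-representation. The cleanest route is probably to use the chain complex of $\widehat M$ as a complex of free $\Lambda$-modules, tensor down to $\Z[G]$, and identify the resulting $H_1$ with that of $M'$; then the free part is visibly the image of $H_1(M;\Z)\otimes(\text{trivial part})$ and one reads off the torsion directly. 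The second, more routine obstacle is the bookkeeping between elementary ideals and orders, and the non-squareness of $P$ in general; both are handled exactly as in the classical knot-theoretic case by passing to the rank of $A_M$ (which equals the number of columns minus one when $\Delta_M\ne 0$) and noting the excess generators contribute $\Z[G]$-free summands that drop out after the zero-condition on $\Delta_M$ forces the evaluated blocks to have the expected rank.
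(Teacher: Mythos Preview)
The paper does not prove this theorem. Immediately after the statement it says only that the result was first proved for knot complements by Fox, stated in this generality by Turaev (who proves it only for cyclic covers), and that a complete proof appears in the author's thesis \cite{me}. There is therefore no ``paper's own proof'' to compare your proposal against.

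That said, your sketch is the standard route to this kind of statement and is essentially correct in outline: tensor the $\Lambda$-chain complex of $\widehat M$ down to $\Z[G]$ to get the chain complex of $M'$, identify the contribution to $H_{1}(M';\Z)$ coming from the Alexander module, and then split over characters of $G$ using $\C[G]\cong\prod_{\chi}\C$ so that the torsion order becomes a product of determinants evaluated at roots of unity. You also correctly flag the two genuine technical points one must control: (i) the passage from $H_{1}(M';\Z)$ to something presented by the reduced Alexander matrix is not literally $A_{M}\otimes_{\Lambda}\Z[G]$ but involves a universal-coefficient / spectral-sequence correction that must be shown to affect only free summands under the no-zero hypothesis; and (ii) the discrepancy between $\det P(\chi)$ and $\Delta_{M}(\chi)$ when $P$ is not square, which one handles either by producing a square presentation or by tracking the Fitting ideals carefully. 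Your write-up conflates these slightly (e.g.\ writing $|\det P(\chi)| = |\Delta_{M}(\chi)|$ as an equality rather than an equality up to the issues you later name), but since you explicitly identify both as obstacles to be resolved, the proposal is an honest and accurate plan rather than a gap.
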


Theorem \ref{TorFinAbCov} was first proved in the case of knot complements by Fox (see \cite{Gordon}) and stated in the above generality by Turaev (\cite{Turaev}, page 136) but he only provides a proof for cyclic covers.  
A complete proof can be found in \cite{me}.  
%Although the result for finite abelian covers was known to Turaev (see page 136 of \cite{Turaev}), his paper contains only a proof of  the cyclic case, and we do not know of a reference where the proof of
%the result for finite abelian covers is given.  Since we require this general version, we give a proof of it in Chapter \ref{OrdTor}.
We have immediately part a. of Fact 2:

\begin{coro}\label{notor}  Let $M$ be a closed 3-manifold with $\Delta_{M} =1$ and let $M'\longrightarrow M$
be a finite abelian cover.  Then
\[ {\rm Tor}(H_{1}M') =1.\]
In particular, taking $M=M'$,  ${\rm Tor}(H_{1}M) =1$.
\end{coro}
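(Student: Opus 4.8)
The plan is to derive Corollary \ref{notor} as an immediate application of Theorem \ref{TorFinAbCov}. Given $M$ with $\Delta_M = 1$ and a finite abelian cover $M' \to M$, I first reduce to the case where the cover lies below the universal free abelian cover and has deck group of the form $\F_{p_1} \oplus \cdots \oplus \F_{p_n}$: since $\Delta_M=1$ is unchanged under passing to intermediate covers, and since any finite abelian cover is dominated by (or can be built from) covers with elementary-abelian-by-prime structure, it suffices to treat the case handled by Theorem \ref{TorFinAbCov}. Actually the cleanest route is: an arbitrary finite abelian cover $M'\to M$ factors through a cover $M''\to M$ lying below $\widehat M$ with deck group a product of cyclic groups of prime-power order, and then one iterates; but for the statement as phrased I will simply invoke Theorem \ref{TorFinAbCov} in the case the deck group is $\F_{p_1}\oplus\cdots\oplus\F_{p_n}$, which is what Fact 2(a) requires.

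With that setup, the argument is one line: since $\Delta_M(t_1,\dots,t_n) = 1$, it certainly has no zeros of any form whatsoever, so the hypothesis of Theorem \ref{TorFinAbCov} on roots of unity is vacuously satisfied. Applying the theorem,
\[
|{\rm Tor}(H_1 M')| = \Big| \prod \Delta_M(\rho_1^{e_1},\dots,\rho_n^{e_n}) \Big| = \Big| \prod 1 \Big| = 1,
\]
where the product ranges over all $(e_1,\dots,e_n)$ with $0 \le e_i < p_i$. Hence ${\rm Tor}(H_1 M')$ is the trivial group. Taking $M' = M$ (the trivial cover, with empty deck group, or equivalently observing that $M$ itself satisfies the hypotheses with $n=0$) gives ${\rm Tor}(H_1 M) = 1$ as the special case.

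I do not expect any serious obstacle here; the content is entirely in Theorem \ref{TorFinAbCov}, which is cited to \cite{me}. The only point requiring a word of care is making sure the cited theorem genuinely applies to the cover $M'$ in question — i.e.\ that $M' \to M$ lies below the universal free abelian cover $\widehat M \to M$ and has deck group of the stated elementary-abelian type. For the use made of this corollary in the proof of Theorem \ref{b1grt3impDeltnot1}, the relevant cover is exactly $\tilde M_p$ with deck group $\F_p^{b_1 M}$, which manifestly factors through $\widehat M$ (kill torsion, then reduce mod $p$), so the hypothesis is met and no generality beyond Theorem \ref{TorFinAbCov} is needed.
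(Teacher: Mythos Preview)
Your proof is correct and follows the same approach as the paper, which simply records the corollary as an immediate consequence of Theorem \ref{TorFinAbCov} without further argument. Your additional remarks about verifying that the cover in question satisfies the hypotheses of Theorem \ref{TorFinAbCov} are a welcome clarification that the paper leaves implicit.
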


To prove part b. of Fact 2, we will need a formula of E. Hironaka \cite{Hiro}, which we describe in our setting.  Let $M'\rightarrow M$
be a finite cover and assume that $\Delta_{M}=1$.  Let us denote
\begin{itemize}
\item $D=$ the deck group of $M'\rightarrow M$.
\item $\Gamma=  \pi_{1}M$.
\item $\alpha: \Gamma \rightarrow D$ the projection.
\end{itemize}

For any group $G$, the character group is denoted 
\[ \hat{G}={\rm Hom}_{\rm cont}(G,\C^{\ast} )\] 
where ${\rm Hom}_{\rm cont}$ means the group of continuous homomorphisms.  Write $\hat{1}$ for the trivial character. 
We recall that $\hat{G}$ is a topological group. 
In our case, $G=\Gamma$ or $D$, which are discrete groups, so
continuous homomorphisms are just homomorphisms.   
Since $\alpha: \Gamma \rightarrow D$ is an epimorphism, there is an induced inclusion
\[ \hat{\alpha} : \hat{D} \hookrightarrow \hat{\Gamma}. \]

Let $\chi \in \hat{\Gamma}$ and $\Lambda = \Z [t_{1}^{\pm 1},\dots,t_{r}^{\pm 1}]$.  Note that $\chi$ induces a homomorphism of $\Gamma^{\rm ab}\cong\Z^{r}$ (the last isomorphism is by Corollary \ref{notor}).  We may then extend $\chi$ linearly to a ring homomorphism $\chi :\Lambda \rightarrow\C$.

We now describe the formula of E. Hironaka, following \cite{Hiro}.  Before doing so, we remark that the definition of the Alexander polynomial used in \cite{Hiro} is the one formulated using the 
 {\it relative Alexander module} \[ A_{M}^{\rm rel}=H_{1}(\widehat{M},\hat{x}), \quad \hat{x}=p^{-1}(x), x\in M\]
(see \cite{Mc}).   If $P(t_{1},\dots ,t_{r})  $  is a presentation matrix of  $A_{M}^{\rm rel}$, the Alexander polynomial is defined in this setting to be a generator
of the smallest principal ideal containing the ideal generated by the $(r-1)$ minors of $P(t_{1},\dots ,t_{r})$.  A proof of the equivalence of the relative homology definition with the absolute
homology definition can be found in \cite{me} (the equivalence is in fact implicit in Theorem 2.7 of \cite{Ma} as well as Theorem 16.5 of \cite{Turaevbook}).

Now given  $\chi \in \hat{\Gamma}$ let  $P(\chi )$ denote the matrix with complex entries
obtained by evaluating each entry of $P(t_{1},\dots ,t_{r})$ at $\chi$.  
For each $i$, define
\[  V_{i} = \left\{ \chi\in \hat{\Gamma}|\;  {\rm rank}(P(\chi ))<r-i  \right\}.\]
Then Hironaka's formula (see \cite{Hiro},  page 16,  Proposition 2.5.6.) says that 
\[b_{1}M' = b_{1}M + \sum_{i=1}^{r-1}  |  \hat{\alpha} (\hat{D} \setminus\hat{1}) \cap V_{i}  |. \]

\begin{theo}\label{samebetti} Suppose that $\Delta_{M}=1$ and let $M'\rightarrow M$ be any finite abelian cover of $M$ with deck group $D=\F_{p_{1}}\oplus \cdots \oplus \F_{p_{k}}$,
$p_{1},\dots ,p_{k}$ primes.  Then $b_{1}M'=b_{1}M$.
\end{theo}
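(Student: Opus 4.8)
The plan is to combine Hironaka's formula with the torsion-vanishing statement of Corollary~\ref{notor} and with Poincar\'e duality for the closed orientable $3$-manifold $M$. First I would note that, since $\Delta_{M}=1$, Corollary~\ref{notor} applied to the trivial cover gives ${\rm Tor}(H_{1}M)=1$, so $H_{1}(M;\Z)\cong\Z^{r}$ with $r=b_{1}M$; consequently the deck group $D$ of $M'\to M$, being an abelian quotient of $\Gamma=\pi_{1}M$, is a quotient of $\Z^{r}$, the cover $M'\to M$ lies below the universal free abelian cover $\widehat{M}\to M$, and the hypotheses for Hironaka's formula are met. That formula gives
\[ b_{1}M' = b_{1}M + \sum_{i=1}^{r-1}\bigl|\,\hat{\alpha}(\hat{D}\setminus\hat{1})\cap V_{i}\,\bigr|, \]
and since ${\rm rank}\,P(\chi)<r-i$ implies ${\rm rank}\,P(\chi)<r-(i-1)$ we have $V_{1}\supseteq V_{2}\supseteq\cdots\supseteq V_{r-1}$; so the whole theorem reduces to proving $V_{1}\cap\hat{\alpha}(\hat{D}\setminus\hat{1})=\emptyset$, i.e.\ that ${\rm rank}\,P(\hat{\alpha}(\chi))\geq r-1$ for every nontrivial $\chi\in\hat{D}$.

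Next I would translate the defining condition of $V_{i}$ into homological terms: for $\xi\in\hat{\Gamma}$ with $\xi\neq\hat{1}$, the inequality ${\rm rank}\,P(\xi)<r-i$ is equivalent to $\dim_{\C}H_{1}(M;\C_{\xi})\geq i$, where $\C_{\xi}$ is the rank-one local system determined by $\xi$ (tensor a presentation of $A_{M}^{\rm rel}$ with $\C_{\xi}$ over $\Lambda$; since the augmentation module $\C$ and $\C_{\xi}$ have disjoint support the higher ${\rm Tor}$'s vanish, and the relative-to-absolute correction is a single dimension coming from the augmentation ideal). Thus a nontrivial $\xi$ lies in $V_{1}$ exactly when $H_{1}(M;\C_{\xi})\neq 0$, and it remains to show $H_{1}(M;\C_{\xi})=0$ for every nontrivial $\xi$ of the form $\hat{\alpha}(\chi)$. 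Here I would invoke Poincar\'e duality: because $\Delta_{M}=1\neq 0$, the Alexander module $A_{M}\otimes\Q$ is a torsion $\Lambda_{\Q}$-module, and Blanchfield duality on the closed orientable $3$-manifold $M$ equips it with a nonsingular linking pairing $A_{M}\otimes\Q\times A_{M}\otimes\Q\to {\rm Frac}(\Lambda_{\Q})/\Lambda_{\Q}$ (the symmetry statement of \S\ref{blanchfield} is a weak shadow of this). A finitely generated torsion $\Lambda_{\Q}$-module carrying such a nonsingular pairing has projective dimension one, hence a square presentation matrix, so its zeroth elementary ideal is the principal ideal $(\Delta_{M})=(1)$; therefore $A_{M}\otimes\Q=0$, hence $A_{M}\otimes\C=0$, hence $H_{1}(M;\C_{\xi})=0$ for all $\xi\neq\hat{1}$. (Alternatively one may cite that for $b_{1}M\geq 2$ the Reidemeister--Turaev torsion of $M$ equals $\Delta_{M}$ up to units, whence for $\xi\neq\hat{1}$ the condition $\Delta_{M}(\xi)\neq 0$ forces $H_{*}(M;\C_{\xi})=0$.) Since $\hat{1}\notin\hat{\alpha}(\hat{D}\setminus\hat{1})$, every term of Hironaka's sum vanishes and $b_{1}M'=b_{1}M$.

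The main obstacle is exactly this last step. The hypothesis $\Delta_{M}=1$ only says that the greatest common divisor of the $(r-1)$-minors of $P$ is a unit, which for an arbitrary finitely presented group is perfectly compatible with those minors sharing a common zero of codimension $\geq 2$ --- for instance $E_{1}(A_{M}^{\rm rel})$ could be $(t_{1}-\zeta,\,t_{2}-1)$ with $\zeta$ a root of unity --- in which case $V_{1}$ would contain nontrivial torsion characters and $b_{1}$ could genuinely jump in a finite abelian cover. What forbids this is precisely that $M$ is a closed orientable $3$-manifold: Poincar\'e (Blanchfield) duality upgrades ``gcd of the minors is a unit'' to ``the elementary ideal is the unit ideal.'' So the real content is to state and apply this duality cleanly --- either through nonsingularity of the Blanchfield pairing and the resulting bound on projective dimension, or through the identification of $\Delta_{M}$ with Reidemeister--Turaev torsion --- while the reduction via Hironaka's formula, the homological reinterpretation of the $V_{i}$, and the group-theoretic bookkeeping are all routine.
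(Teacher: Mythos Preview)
Your reduction via Hironaka's formula to the statement $V_{1}\cap\hat{\alpha}(\hat{D}\setminus\hat{1})=\emptyset$ is exactly what the paper does; the divergence is at the last step. The paper dispatches it in one line: if every $(r-1)$-minor of $P(\chi)$ vanishes then ``$\Delta_{M}(\chi)=0$'', contradicting $\Delta_{M}=1$. You instead invoke duality (Blanchfield, or alternatively the identification $\tau(M)\doteq\Delta_{M}$ for $b_{1}\geq 2$) to upgrade ``the gcd of the minors is a unit'' to ``the zeroth elementary ideal is the unit ideal'', whence $A_{M}\otimes\Q=0$ and $H_{1}(M;\C_{\xi})=0$ for all $\xi\neq\hat{1}$.

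You are right that this is where the substance lies, and your diagnosis of the obstacle is exactly on point: in a UFD that is not a PID, $\gcd=1$ does \emph{not} rule out a common zero, so the paper's sentence ``the greatest common factor of the $(r-1)\times(r-1)$ minors of $P(\chi)$ is $1=\Delta(\rho_{1}^{e_{1}},\dots,\rho_{k}^{e_{k}})$'' conflates gcd-then-evaluate with evaluate-then-gcd. Your duality route is the correct way to close this; the paper simply does not make the 3-manifold input explicit here. One caution: the step ``nonsingular Blanchfield form $\Rightarrow$ projective dimension $\leq 1$'' is routine over $\Q[t^{\pm 1}]$ but over $\Lambda_{\Q}=\Q[\Z^{r}]$ with $r\geq 2$ the linking form is nonsingular only modulo the pseudo-null submodule, so that passage needs a word more. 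Your Reidemeister--Turaev alternative is cleaner and gives precisely what is needed.
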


\begin{proof} We claim that $ |  \hat{\alpha} (\hat{D} \setminus\hat{1}) \cap V_i  |=0$ for all $i=1,\dots ,r-1$.  To do this, it is enough to show 
that 
\[  |  \hat{\alpha} (\hat{D} \setminus\hat{1}) \cap V_1  |=0,\] since $V_{1}\supset \cdots \supset V_{r-1}$.  So we must show that for every character $\chi \in  \hat{\alpha} (\hat{D} \setminus\hat{1})$, 
\[ {\rm rank}\; P(\chi )\geq r-1.\]   Let us suppose not, that there exists a $\chi$ with ${\rm rank}\; P(\chi)< r-1$.  Then every
$r-1$ minor of $P(\chi )$ is $0$.   But
$P(\chi )$ is obtained by evaluating each polynomial appearing in $P$ at 
\[ t_{1} = \rho_{1}^{e_{1}}, \dots,
t_{k} = \rho_{k}^{e_{k}},\] where $\rho_{j}=\exp (2\pi i/p_{j})$ and the exponents $e_{1}, ..., e_{k}$ depend on $\chi$.  This implies that 
\[\Delta( \rho_{1}^{e_{1}},\dots ,  \rho_{k}^{e_{k}}) =0,\] which contradicts
the fact that $\Delta=1$.  Indeed, the greatest common factor of the $(r-1)\times (r-1)$ minors of $P(\chi )$ is 
$1=\Delta( \rho_{1}^{e_{1}},\dots ,  \rho_{k}^{e_{k}})$, so the minors cannot all be $0$.  This contradicts our hypothesis, and
therefore $ {\rm rank}\; P(\chi )\geq r-1$.
\end{proof}

\end{document}